\def\calA{{\mathcal{A}}}
\def\calB{{\mathcal{B}}}
\def\calC{{\mathcal{C}}}
\def\calH{{\mathcal{H}}}
\def\bbN{{\mathbb{N}}}
\def\calO{{\mathcal{O}}}
\def\calT{{\mathcal{T}}}
\def\calU{{\mathcal{U}}}
\def\calV{{\mathcal{V}}}
\def\calW{{\mathcal{W}}}
\def\bbZ{{\mathbb{Z}}}
\def\id{{\operatorname{id}}}
\def\im{{\operatorname{im}}}
\def\Hom{{\operatorname{Hom}}}
\def\Ob{{\operatorname{Ob}}}
\def\pr{{\operatorname{pr}}}
\def\Pr{{\operatorname{Pr}}}
\def\bary{{\operatorname{bary}}}
\def\girth{{\operatorname{girth}}}
\def\diam{{\operatorname{diam}}}
\def\inc{{\operatorname{incl}}}
\def\trans{{\operatorname{trans}}}
\def\sign{{\operatorname{sign}}}
\DeclareMathOperator*{\colim}{colim}
\DeclareMathOperator*{\argmin}{argmin}
\def\contr{\prod^\ctr}
\def\Ab{{\calA\textsl{b}}}
\def\Ch{\calC\!\textsl{h}\,}
\def\Chhfd{{\widetilde \Ch_{\!\operatorname{hfd}}}}
\def\chhfd{{\Ch_{\!\operatorname{hfd}}}}
\def\Chf{{\Ch_{\!\operatorname{f}}}}
\def\Chgeq{{\Ch^{\!\geq}}}
\def\Idem{{\operatorname{Idem}}}
\def\where{{\,\big|\,}}           
\def\comp{{\,\circ\,}}            
\def\pt{\mathsf{pt}}              
\def\ctr{\text{contr.}}           
\def\bd{\text{bound.}}            
\def\sing{{\operatorname{sing}}}  
\def\underbrace#1{%
   \@ifnextchar_{\tikz@@underbrace{#1}}{\tikz@@underbrace{#1}_{}}}
\def\tikz@@underbrace#1_#2{%
   \tikz[baseline=(a.base)] {\node[inner sep=1] (a) {\(#1\)};
   \draw[line cap=round, line width = 0.75pt,decorate,decoration={brace,amplitude=5pt}]
     (a.south east) -- node[below,inner sep=4pt] {\(\scriptstyle #2\)} (a.south west);}}
\def\overbrace#1{%
   \@ifnextchar_{\tikz@@overbrace{#1}}{\tikz@@overbrace{#1}_{}}}
\def\tikz@@overbrace#1_#2{%
   \tikz[baseline=(a.base)] {\node[inner sep=1] (a) {\(#1\)};
   \draw[line cap=round, line width = 0.75pt,decorate,decoration={brace,amplitude=5pt}]
     (a.north west) -- node[above,inner sep=4pt] {\(\scriptstyle #2\)} (a.north east);}}
\newcommand{\set}[2]{\left\{ #1 \,\middle|\, #2 \right\}}
\newcommand{\bigslant}[2]{{\left.\raisebox{.2em}{$#1$}\middle/\raisebox{-.2em}{$#2$}\right.}}
\newcommand{\oset}[3][0ex]{%
  \mathrel{\mathop{#3}\limits^{
    \vbox to#1{\kern-4\ex@
    \hbox{$\scriptstyle#2$}\vss}}}}
\newcommand{\osett}[3][0ex]{%
	\mathrel{\mathop{#3}\limits^{
			\vbox to#1{\kern-6\ex@
				\hbox{$\scriptstyle#2$}\vss}}}}
\newcommand{\uset}[3][0ex]{%
  \mathrel{\mathop{#3}\limits^{
    \vbox to#1{\kern8\ex@
    \hbox{$\scriptstyle#2$}\vss}}}}
\def\lf{\textit{lf}}
\author{Markus Zeggel}
\title{The Bounded Isomorphism Conjecture\\ for Spaces of Graphs with Large Girth}
\begin{document}
	
	\maketitle
	
	\paragraph*{Abstract}
	In this article we study a coarse version of the $K$-theoretic Farrell--Jones conjecture we call \emph{coarse} or \emph{bounded isomorphism conjecture}.
	With techniques that have already been used to prove the Farrell--Jones conjecture for hyperbolic groups we are able to verify the bounded isomorphism conjecture for spaces of graphs with large girth and bounded geometry.

	\tableofcontents
	
	\section{Introduction}

There are two important isomorphism conjectures in the fields of operator algebras and geometric topology: the Baum--Connes conjecture and the Farrell--Jones conjecture. In the Davis--Lück picture \cite{Davis.1998}, these conjectures predict the following: For any group $G$, the assembly maps 
$$H^G_*(E_\text{Fin} G; \mathbb K^\text{top}) \rightarrow K_*(C^*_r(G))
\quad \text{ and } \quad
H^G_*(E_\text{VCyc} G; \mathbb K_R^\text{alg}) \rightarrow K_*(R[G])$$
are isomorphisms. These formulae help compute the topological $K$-theory of the reduced group $C^*\!$-algebra $C^*_r(G)$ and the algebraic $K$-theory of the group ring $R[G]$, respectively.

In order to prove or disprove such conjectures, it can be helpful to compare them to other versions of these conjectures.
For example, there is a coarse version of the Baum-Connes conjecture.
Using the connection between the ``coarse'' and the ``usual'' conjecture, one can show that the Baum--Connes assembly map fails to be surjective for Gromov monster groups, i.e.\ groups that coarsely contain infinite expanders, cf.\ \cite[Theorem~8.2]{Willett.2012a}.
Such methods of constructing counter-examples are based on the work of Higson, Lafforgue and Skandalis, see \cite{Higson.2002}.

In this article we study the same geometric objects as in \cite{Willett.2012a} and \cite{Willett.2012b}, namely space of graphs with large girth.

\begin{definition}
	\label{def:girth}
	Let $\Gamma = (V,E)$ be a graph. The \emph{girth} of $\Gamma$ is the minimal length of all cycles in $\Gamma$, i.e.\
	$$\girth(\Gamma) := \inf \set{ n \in \bbN}{\exists v_0, \dots, v_n \in V: v_0 = v_n \wedge (v_i, v_{i+1}) \in E, 0 \leq i < n}.$$
\end{definition}

\begin{definition}
	\label{def:space_of_graphs}
	Given a sequence of finite graphs $\Gamma_n$, we say $\Gamma = \bigsqcup_{n \in \bbN} \Gamma_n$ is a \emph{space of graphs with large girth} if $\girth(\Gamma_n) \uset{n \rightarrow \infty}{\longrightarrow} \infty$.
\end{definition}

\subsection{Result}

The reduced and the maximal version of the Baum--Connes conjecture behave very differently.
The reduced coarse assembly map fails to be surjective for spaces of expander graphs with bounded geometry and large girth, cf.\ \cite[Theorem 1.5 and Theorem 1.6]{Willett.2012a}.
In contrast, the maximal assembly map is an isomorphism for those spaces, whether they are expanders or not, cf.\ \cite[Theorem 1.1]{Willett.2012b}.
In this article we mimic the proof of the latter statement in the Farrell--Jones setting and eventually prove the following result.

\begin{theorem}
	\label{thm:main_theorem}
	Let $X$ be a space of graphs with large girth and bounded geometry. 
	The coarse assembly maps combine to an isomorphism
	$$\colim_{d \geq 0} H_*^\lf\big(P_d(X); \mathbb K_\calA^\text{alg}\big) \rightarrow K_*\big(\calC^\lf( X; \calA)\big).$$
\end{theorem}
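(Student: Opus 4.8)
The plan is to transport the proof of the maximal coarse Baum--Connes conjecture for spaces of graphs with large girth, cf.\ \cite{Willett.2012b}, into the controlled-category, algebraic $K$-theory setting, feeding in the controlled algebra that underlies the Farrell--Jones conjecture for hyperbolic groups. Write $\mu_X$ for the map in question. First I would set both sides up as \emph{coarse homology theories} on bounded-geometry metric spaces, with $\mu$ a natural transformation: the source $\colim_{d \geq 0} H_*^\lf(P_d(-);\mathbb K_\calA^\text{alg})$ and the target $K_*(\calC^\lf(-;\calA))$ are each coarsely invariant, admit a coarse Mayer--Vietoris sequence for decompositions that are excisive at every scale, are compatible with the relevant filtered colimits (the Rips parameter $d$ on the source, the propagation bound on the target), and are \emph{strongly additive}, sending a coarse disjoint union $\bigsqcup_n Y_n$ to the product of the values on the $Y_n$. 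Passing to the mapping cone of $\mu$ produces an obstruction functor $\calF$ with these same formal properties and with $\calF(\pt) = 0$ (the conjecture for a point being elementary); it then suffices to show $\calF(X) = 0$.

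Next I would extract the geometry. For every $R > 0$, all but finitely many components $\Gamma_n$ satisfy $\girth(\Gamma_n) > 4R$, so in those components every ball of radius $R$ is a tree. Hence, for each $R$, split $X = A_R \sqcup B_R$ with $A_R = \bigsqcup_{n \leq N(R)} \Gamma_n$ the finitely many low-girth components --- a space of asymptotic dimension $0$, on which $\calF$ vanishes by strong additivity and $\calF(\pt)=0$ --- and $B_R = \bigsqcup_{n > N(R)}\Gamma_n$ the high-girth tail. Bounded geometry together with the tree structure of $R$-balls then furnishes, \emph{at scale $R$}, a two-colour cover $B_R = U_0 \cup U_1$ in which each $U_i$ is an $R$-disjoint union of sets whose diameter is bounded by a constant depending only on $R$ --- the usual decomposition witnessing $\operatorname{asdim}\leq 1$ for trees, carried out uniformly across the components. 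Equivalently, in flow-space language, the flow space of $P_d(B_R)$ carries long, thin, $P_d(B_R)$-bounded covers of dimension $1$; either formulation feeds the next step.

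The argument then runs as a Mayer--Vietoris swindle at every scale. A class in $\calF_*(X)$ is, by the controlled nature of $\calC^\lf(X;\calA)$ and of the Rips complexes, represented at some finite control parameter $R$. At that scale the decomposition $X = A_R \sqcup B_R$ together with the two-colour cover of $B_R$ presents the class through subspaces on which $\calF$ is already known to vanish: the finite piece $A_R$, and --- inside $B_R$, at scale $R$ --- disjoint unions of uniformly bounded sets. Threading this through the coarse Mayer--Vietoris sequence (or, in the flow-space formulation, transferring the class up to the flow space, where the long thin cover supplies the controlled homotopy that contracts it, and pushing back down) annihilates the class; as $R$ was arbitrary and every class occurs at some finite $R$, we get $\calF_*(X) = 0$, hence $\mu_X$ is an isomorphism.

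The step I expect to be the main obstacle is turning this ``Mayer--Vietoris at every scale'' mechanism into an actual vanishing rather than a mere filtration: one has to track with explicit control functions how the parameter $R$ of a representative, the diameter bound of the cover members of $B_R$, and the connecting maps interact, and check that iterating the decomposition does not leak control to infinity --- precisely where bounded geometry and the rate at which $\girth(\Gamma_n)\to\infty$ must be used together. A secondary obstacle, depending on how much is imported wholesale, is the input that $\calF$ vanishes on spaces of finite asymptotic dimension --- i.e.\ the bounded isomorphism conjecture for such spaces --- together with the construction of the long, thin, finite-dimensional covers of the relevant flow spaces, which is the controlled-algebra core that the hyperbolic-groups techniques are meant to supply.
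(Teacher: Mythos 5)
The central step of your proposal fails. You claim that, for $n$ beyond some $N(R)$, bounded geometry plus the fact that $R$-balls in $\Gamma_n$ are trees yields a two-colour cover $B_R = U_0 \cup U_1$ in which each $U_i$ is an $R$-disjoint union of sets of diameter bounded by a constant depending only on $R$. That is precisely the assertion that the coarse disjoint union $\bigsqcup_n \Gamma_n$ has asymptotic dimension $\leq 1$ (uniformly across components). This is false in exactly the cases that make Theorem~\ref{thm:main_theorem} interesting: sequences of bounded-degree expanders can have girth tending to infinity, and spaces of expanders have infinite asymptotic dimension, so no such uniform-diameter $R$-disjoint colouring exists for all $R$. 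The error is the inference from ``locally a tree at scale $R$'' to ``decomposable like a tree at scale $R$'': after removing a ball the graph remains highly connected via its many independent cycles, so the tiling cannot be propagated consistently with a uniform diameter bound. This is also why the paper explicitly does not take the finite-decomposition-complexity route of Ramras--Tessera--Yu, which your ``Mayer--Vietoris swindle at every scale'' essentially reinvents.

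The mechanism that actually works --- and which you gesture at only in the parenthetical ``transferring the class up to the flow space'' --- requires two ingredients your sketch omits. First, the Rips complex and the colimit over $d$ are removed not by ``representing a class at finite $R$'' but by Lemma~\ref{lem:rips_complex_of_tree}: for $d < \frac12\girth(\Gamma_n)$ the Rips complex $P_d(\Gamma_n)$ retracts onto $\Gamma_n$ via a uniformly Lipschitz homotopy, and the covering maps $\widetilde\Gamma_n \to \Gamma_n$ are asymptotically faithful (with radius $\sim \girth(\Gamma_n)/4$); this reduces Conjecture~\ref{conj:main_conjecture} to showing $K_*\bigl(\frac{\prod^\bd_n}{\bigoplus_n}\calO_{G_n}(\widetilde\Gamma_n;\calA)\bigr) = 0$, a statement about the universal covers, which are genuinely trees, with their $G_n = \pi_1(\Gamma_n)$-actions. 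Second, the vanishing is shown by factoring the identity through zero via a controlled transfer $\trans$ (tensoring, fibrewise over $\widetilde V_n$, with singular chains of the compactified tree $\overline{\widetilde\Gamma}_n$), followed by the nerve map $f^{\calU}$ of an explicitly constructed $\leq 7$-dimensional long-thin $G_n$-cover of $\widetilde V_n \times \overline{\widetilde\Gamma}_n$ built from a flow space, landing in a controlled product whose $K$-theory is killed by skeleton induction, excision, homotopy invariance, and a base case handled by $\nicefrac12$-maps on trees. None of this can be replaced by the scale-$R$ asdim decomposition, because the trees that carry the construction are the universal covers $\widetilde\Gamma_n$, not subsets of the $\Gamma_n$ themselves.
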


\subsection{Outline}

After introducing some basic notions and recapitulating the bounded isomorphism conjecture in Section \ref{sec:basics} we will explain how to simplify the statements we have to prove in order to prove the conjecture in Section \ref{sec:getting_rid_of_the_rips_complex}.
In Section \ref{sec:proof_outline} we will introduce the commutative diagram which will explain why a certain obstruction vanishes. The actual proof of this makes up the main part of this article and will encompass the last four sections.

\subsubsection*{Acknowledgements}

This article is based on my PhD thesis. I would like to thank my supervisor Arthur Bartels for his continuing support and many helpful discussions.

The project was funded by the Deutsche Forschungsgemeinschaft (DFG, German Research Foundation) -- Project-ID 427320536 -- SFB 1442, as well as under Germany’s Excellence Strategy EXC 2044 390685587, Mathematics Münster: Dynamics--Geometry--Structure.
	\section{Basics}
\label{sec:basics}

\subsection{$\varepsilon$-Filtered Categories}

With the following definition we are able to apply concepts of basic analysis to category theory.

\begin{definition}
	We call an additive category $\calA$ an \emph{$\varepsilon$-filtered category} if, for every $\varepsilon > 0$ and pair of objects $X,Y \in \calA$, there is a collection of (additive) subgroups $\Hom_\varepsilon(X, Y) \subseteq \Hom(X,Y)$, called \emph{the subgroup of $\varepsilon$-controlled morphisms}, such that
	\begin{enumerate}
		\item $\varepsilon \leq \varepsilon' \Rightarrow \Hom_\varepsilon(X,Y) \subseteq \Hom_{\varepsilon'}(X,Y)$,
		\item $\bigcup_{\varepsilon > 0} \Hom_\varepsilon(X,Y) = \Hom(X,Y)$,
		\item $\id_X \in \Hom_\varepsilon(X,X)$ for all $\varepsilon > 0$, and
		\item $f \in \Hom_\varepsilon(X,Y), g \in \Hom_{\varepsilon'}(Y,Z) \Rightarrow g \comp f \in \Hom_{\varepsilon + \varepsilon'}(X, Z)$.
	\end{enumerate}
\end{definition}

The direct sum of $\varepsilon$-filtered categories is again $\varepsilon$-filtered.
However, there is no canonical way of defining a useful $\varepsilon$-filtration on the direct product of infinitely many $\varepsilon$-filtered categories.
The following definition remedies this.

\begin{definition}
	\label{def:bounded_product}
	Let $\calA_i$, $i \in I$, be a family of $\varepsilon$-filtered categories. The \emph{bounded product} $\prod^\bd_{i \in I} \calA_i$ is the subcategory of $\prod_{i \in I} \calA_i$ that has all objects, but only includes those morphisms $(f_i)_i$ that are uniformly $\varepsilon$-controlled for an $\varepsilon > 0$.
\end{definition}

The following definition describes how we can create several different $\varepsilon$-filtered categories from a given metric space.

\begin{definition}
	\label{def:controlled_categories}
	Let $G$ be a group and $(X,d)$ a locally compact metric space with isometric $G$-action.
	Let $\calA$ be an additive category with $G$-action.
	The category $\calC_G^\lf(X;\calA)$ is defined as follows.
	\begin{description}
		\item[Objects] are triples $(S, \pi: S \rightarrow X, M: S \rightarrow \Ob(\calA))$ where $S$ is a free $G$-set such that $\pi$ and $M$ are $G$-equivariant and for all cocompact $G$-subsets $K \subseteq X$ the preimage $\pi^{-1}(K)$ is cofinite, i.e.\ has finite quotient.
		\item[Morphisms] $\varphi: (S, \pi, M) \rightarrow (S', \pi', M')$ are given by matrices $(\varphi_{s'}^s)_{s \in S, s' \in S'}$, where $\varphi_{s'}^s: M(s) \rightarrow M'(s')$ is a morphism in $\calA$ such that $g.\varphi_{s'}^s = \varphi_{gs'}^{gs}$ and where
		\begin{enumerate}
			\item \label{c_rows_columns_finite} all rows and columns are finite, and
			\item \label{c_bounded_x} $\exists \alpha > 0 \,\forall s, s' : \varphi_{s'}^s \neq 0 \Rightarrow d(\pi_X(s), \pi'_X(s')) < \alpha$.
		\end{enumerate}
		Composition is given by matrix multiplication, i.e.\ $(\varphi \comp \psi)^s_{s''} := \sum\limits_{s'} \varphi^{s'}_{s''} \comp \psi^s_{s'}$.
	\end{description}
	A morphism $\varphi: (S, \pi, M) \rightarrow (S', \pi', M')$ in this category is \emph{$\varepsilon$-controlled} if 
	$$\forall s \in S, s' \in S': \varphi_{s'}^s \neq 0 \Rightarrow d(\pi_X(s), \pi'_X(s')) < \varepsilon.$$
	Based on this category we can derive various different flavours:
	\begin{itemize}
		\item Define $\calO^\lf_{G}(X; \calA)$ as the subcategory of $\calC_G^\lf(X \times \bbN; \calA)$ that consists of all objects, but only of those morphisms that satisfy the following convergence condition.
		\begin{equation*}
			\label{eq:convergence}
			\forall \varepsilon>0 \,\exists t_0>0 \,\forall s,s': \pi_\bbN(s) > t_0 \wedge \varphi_{s'}^s \neq 0 \Rightarrow d(\pi_X(s), \pi'_X(s')) < \varepsilon \tag{$\ast$}
		\end{equation*}
		\item Define $\calT_G^\lf(X; \calA)$ as the full subcategory of $\calC_G^\lf(X \times \bbN; \calA)$ consisting of those objects $(S, \pi, M)$ with $\im(\pi_\bbN)$ being finite.
		Note that this is automatically a full subcategory of $\calO_{G}^\lf(X; \calA)$ as well.
		\item Define $\calO^\lf_{G}(X \mid Y; \calA)$ as the subcategory of $\calC_G^\lf(X \times Y \times \bbN; \calA)$ that consists of all objects, but only of those morphisms that satisfy the same convergence condition (\ref{eq:convergence}) as above, i.e.\ morphisms satisfy the control condition in $X$-direction, but not in $Y$-direction.
		\item All these categories have a compactly supported version for which we demand objects $(S, \pi, M)$ to satisfy in addition
		$$\exists K \subseteq X \text{ cocompact}: \im(\pi_X) \subseteq K.$$
		We explicitly only want this condition to only apply to the $X$-direction, not the $\bbN$- or $Y$-direction. We denote this modification by dropping the $\lf$ from the notation.
		\item If $G$ is the trivial group, we simply drop it from the notation as well.
	\end{itemize}
\end{definition}

In some cases it is useful to distinguish $\varepsilon$-filtrations for each coordinate direction of the categories defined above.
This allows us to define a more refined notion of the bounded product for the category $\calO_G(- \mid -; \calA)$.

\begin{definition}
	\label{def:controlled_product}
	Let $X_n$ and $Y_n$ be two sequences of metric spaces with isometric $G_n$-actions. The \emph{controlled product} $\contr_{n \in \bbN} \calO_{G_n}(X_n \mid Y_n; \calA)$ is the subcategory of $\prod^\bd_{i \in I} \calO_{G_n}(X_n \mid Y_n; \calA)$ that has all objects, but only includes those morphisms $(\varphi_n)_n$ that satisfy the following convergence condition in $Y_n$-direction:
	$$\forall \varepsilon > 0 \,\exists N_0 \in \bbN \,\forall n \geq N_0: \varphi_n \text{ is $\varepsilon$-controlled in $Y_n$-direction.}$$
\end{definition}

\begin{remark}
	Certain maps $X \rightarrow X'$ induce functors on the categories defined above.
	The properties those maps have to satisfy depend on the category.
	For instance, uniformly continuous maps preserve (\ref{eq:convergence}).
\end{remark}

\subsection{The Conjecture}

In this article we are concerned with the following conjecture which is the algebraic analogue of the coarse Baum--Connes conjecture.
It already appeared implicitly in \cite{Carlsson.Pedersen.1995} where the authors studied a splitting of the $K$-theoretic assembly map.

\begin{conjecture}
	\label{conj:main_conjecture}
	Let $X$ be a locally compact metric space. 
	The coarse assembly map
	$$\colim_{d \geq 0} H_*^\lf\big(P_d(X); \mathbb K_\calA^\text{alg}\big) \rightarrow K_*\big(\calC^\lf( X; \calA)\big)$$
	is an isomorphism.
\end{conjecture}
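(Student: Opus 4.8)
The plan is to treat both sides of the assembly map as \emph{coarse homology theories} on the category of locally compact metric spaces and coarse maps, to verify the isomorphism on a small family of elementary building blocks, and then to propagate it by a controlled Mayer--Vietoris induction. So first I would set up the axiomatic framework: show that $X \mapsto \colim_{d\geq 0} H_*^\lf\big(P_d(X); \mathbb K_\calA^\text{alg}\big)$ and $X \mapsto K_*\big(\calC^\lf(X;\calA)\big)$ are both coarsely invariant, functorial under coarse maps, vanish on flasque spaces, and fit into a long exact Mayer--Vietoris sequence for every coarsely excisive decomposition $X = A \cup B$. For the left-hand side this is standard homotopy-theoretic input: $\colim_d P_d(-)$ converts coarse data into honest homotopy data, and $\mathbb K_\calA^\text{alg}$-homology of the Rips complexes is a genuine homology theory. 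For the right-hand side one uses an Eilenberg swindle (a flasque space carries a ``shift to infinity'' object that absorbs everything, killing $K$-theory) together with a Karoubi-filtration argument: the inclusions of the controlled subcategories of $\calC^\lf(X;\calA)$ supported near $A$, near $B$, and near $A\cap B$ assemble into a homotopy fibration sequence and hence the desired exact sequence. I would also check that the assembly map is natural for all of this, for instance by realising it uniformly as the forget-control map out of the categories $\calO^\lf(-;\calA)$ and $\calT^\lf(-;\calA)$ of Definition~\ref{def:controlled_categories}, whose convergence condition (\ref{eq:convergence}) is precisely what makes the relevant comparison objects swindleable.

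Second, I would verify the isomorphism on building blocks: a point, where both sides are $K_*(\calA)$; a ray $[0,\infty)$, which is flasque, so both sides vanish and the statement is ``$0 = 0$''; and then, by telescoping these, on uniformly contractible spaces of bounded local complexity such as $\bbR^n$ or metric simplicial complexes of bounded geometry and bounded dimension. The inductive engine is the classical Yu-type argument: if $X$ admits, at every scale $R>0$, a uniformly bounded cover whose nerve has combinatorial dimension at most some fixed $k$ (i.e.\ $X$ has finite asymptotic dimension), one decomposes $X$ into $k+1$ coarsely excisive families of ``colours'', each coarsely a disjoint union of bounded-diameter pieces, runs the Mayer--Vietoris sequences against the colour filtration, and closes the induction on $k$ using the five lemma, with the base case supplied by the building blocks above.

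The main obstacle is exactly this last step in full generality: for an \emph{arbitrary} locally compact metric space there is no control whatsoever on the scalewise covers --- no finite asymptotic dimension, no coarse embedding into Hilbert space, no uniform structure to drive any induction --- so the formal machinery simply has nothing to bite on. Moreover the analogous \emph{reduced} coarse Baum--Connes statement is known to fail for expander graphs, which strongly indicates that no purely formal argument can succeed and that genuine geometric hypotheses on $X$ are unavoidable. This is precisely the point at which the paper trades ``arbitrary $X$'' for ``$X$ a space of graphs with large girth and bounded geometry'': the large-girth hypothesis means each component $\Gamma_n$ looks, below its girth scale, like a tree and hence coarsely like the elementary building blocks; bounded geometry provides the uniformity needed to glue these local pictures across all $n$ at once, for which the right ambient category is the controlled product $\contr_{n}\calO_{G_n}(X_n \mid Y_n;\calA)$ of Definition~\ref{def:controlled_product}; and the convergence conditions encoded in $\calO^\lf$ and $\calT^\lf$ capture the ``escape to infinity in the $\bbN$-direction'' that makes the swindle-type vanishing results available in this setting. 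Absent some such geometric input I do not expect the conjecture to hold in the stated generality, and I would regard isolating the minimal hypothesis on $X$ that makes the dimension induction close as the real mathematical content of any attempted proof.
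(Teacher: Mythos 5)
You correctly recognized that the statement you were handed is labelled a conjecture for a reason: the paper never proves it for an arbitrary locally compact metric space, and your refusal to manufacture a general argument is the right call --- the failure of the reduced coarse Baum--Connes map on expanders, cited in the introduction, is precisely why the paper only claims Theorem \ref{thm:main_theorem}, i.e.\ the conjecture for spaces of graphs with large girth and bounded geometry. So there is no general proof in the paper to compare yours against, only that special case.

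For that special case, however, the mechanism you sketch is not the one the paper uses, and it would not suffice. A scalewise Mayer--Vietoris induction over covers of bounded multiplicity is exactly the Ramras--Tessera--Yu route for finite decomposition complexity, which the paper cites as a separate result; spaces of graphs with large girth and bounded geometry include expanders, which need not have finite asymptotic dimension or FDC and need not coarsely embed into Hilbert space, so your inductive engine has nothing to run on for precisely the class of interest. The paper instead (i) uses the girth hypothesis to collapse the Rips complexes (Lemma \ref{lem:rips_complex_of_tree}) and, via asymptotic faithfulness of the covering maps $\widetilde\Gamma_n \rightarrow \Gamma_n$, reduces the conjecture for $X=\bigsqcup_n\Gamma_n$ to showing $K_*\big(\frac{\prod^\bd_n}{\bigoplus_n}\calO_{G_n}(\widetilde\Gamma_n;\calA)\big)=0$ (Proposition \ref{prop:goal_without_rips}), and then (ii) proves this vanishing by factoring the identity through zero in the style of the Farrell--Jones proofs for hyperbolic groups: a transfer given by tensoring with controlled singular chain complexes of the compactified trees $\overline{\widetilde\Gamma}_n$ (Definition \ref{def:transfer}, Lemma \ref{lem:transfer_commutatve}), long thin covers of $\widetilde V_n\times\overline{\widetilde\Gamma}_n$ constructed from a flow space (Theorem \ref{thm:long_thin_covers}), a contracting map to the nerve (Proposition \ref{prop:contracting_map}), and a vanishing theorem for the target category proved by induction over skeleta using excision and homotopy invariance, with the tree base case handled by $\tfrac12$-maps (Lemmas \ref{lem:trivial_k_theory} and \ref{lem:proof_of_assumption}). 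Your instinct that Karoubi filtrations, Mayer--Vietoris sequences and Eilenberg swindles are the right low-level tools is borne out --- they all appear in Section 7 --- but the global strategy there is transfer plus long-and-thin covers, not a dimension induction on the underlying coarse space, and isolating that is the actual content of the paper.
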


Here, $H^\lf$ is a locally finite homology theory, cf.\ \cite{Weiss.2002}, and $P_d(X)$ is the Rips complex of $X$.
This locally finite homology theory can be modelled as the $K$-theory of the quotient of $\calO^\lf$ and $\calT^\lf$.
The assembly map is then given by the connective homomorphism of a long exact sequence associated to $\calO^\lf$ and $\calT^\lf$.
More details can be found in \cite[Section 3]{Zeggel.2021}.
The important fact here is that the conjecture can be proven for $X$ by showing that $\colim_{d \geq 0} K_*\big(\calO^\lf(P_d(X); \calA)\big)$ vanishes.
For instance, Ramras, Tessera and Yu do this in \cite{Ramras.2018} in order to prove
the conjecture for spaces $X$ of finite decomposition complexity.

In this article we make use of the geometric properties of spaces of graphs with large girth and are able to get rid of the Rips complex and the colimit.
This eliminates one degree of complexity and makes it easier to prove the conjecture for those spaces.
	\section{Getting Rid of the Rips Complex}
\label{sec:getting_rid_of_the_rips_complex}
	
\begin{lemma}
	\label{lem:rips_complex_of_tree}
	The Rips complex $P_d(\Gamma)$ of a graph $\Gamma$ with $d < \frac12 \cdot \girth(\Gamma)$ is homotopy equivalent to $\Gamma$ itself via a proper and uniformly continuous homotopy.
\end{lemma}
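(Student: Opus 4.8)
The plan is to exhibit a concrete strong deformation retraction of $P_d(\Gamma)$ onto the copy of $\Gamma$ it contains, rather than to argue abstractly via Whitehead's theorem, because the statement insists that the retraction and the homotopy carrying it be proper and uniformly continuous. The single place the girth hypothesis is used is to guarantee that at scale $d$ the Rips complex is assembled out of Rips complexes of trees: one checks that the defining condition for a finite set of vertices to span a simplex $\sigma$ of $P_d(\Gamma)$ confines those vertices --- and the (necessarily unique) geodesics of $\Gamma$ joining them --- to a region of $\Gamma$ too small to contain a cycle, since every cycle has length at least $\girth(\Gamma)$ while $d<\frac{1}{2}\girth(\Gamma)$; that region is therefore a finite subtree $T_\sigma\subseteq\Gamma$ of diameter $O(d)$, and the assignment $\sigma\mapsto T_\sigma$ is coherent under passing to faces, $\tau\leq\sigma\Rightarrow T_\tau\subseteq T_\sigma$. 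In particular $\Gamma$ already contains every $T_\sigma$, and each $T_\sigma$ is contractible.

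I would then build the retraction by induction over skeleta. On the $1$-skeleton the only cells not already in $\Gamma$ are the ``long'' edges $e_{uv}$: send each onto the unique $\Gamma$-geodesic $\gamma_{uv}\subseteq T_{\{u,v\}}$ and homotope it there inside $P_d(\Gamma)$, using that $\{u,w,v\}$ spans a simplex whenever $w$ lies on $\gamma_{uv}$, so as to slide $e_{uv}$ stepwise onto the edge-path of $\gamma_{uv}$. For the inductive step one collapses a simplex $\sigma$ into its tree $T_\sigma$ by a contraction of $T_\sigma$ that is chosen \emph{coherently} in $\sigma$ --- for instance distilled from a fixed well-ordering of the vertex set of $\Gamma$ by geodesic coning --- so that it restricts on each face $\tau$ to the contraction already used on $T_\tau$. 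Contractibility of the finite tree $T_\sigma$ is what lets the homotopy defined on $\partial\sigma$ be extended over $\sigma$; the coherent global choice is what lets these extensions assemble into a single strong deformation retraction, i.e.\ a map $H\colon P_d(\Gamma)\times[0,1]\to P_d(\Gamma)$ with $H_0=\id$, $H_t|_\Gamma=\id$, and $H_1=\iota\comp r$ for the resulting retraction $r\colon P_d(\Gamma)\to\Gamma$.

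Properness and uniform continuity then follow from the locality built into the construction: $H$ moves any point only within the realisation of the star of its carrier simplex, hence within an $O(d)$-neighbourhood of itself, so $r$ and $H$ differ from the identity-on-vertices map by a uniformly bounded amount; this makes them proper (bounded sets have bounded preimages) and, being piecewise-linear with carriers of uniformly bounded diameter, uniformly continuous for the path metric on $P_d(\Gamma)$. I expect the genuine difficulty to be exactly the coherence invoked in the second paragraph: forcing the fibrewise tree-contractions to agree along shared faces while remaining local rules out arbitrary choices and pins down a definite recipe, and verifying that the resulting map is continuous across the faces where simplices meet is the technical heart of the argument. As a consistency check one can note that $P_d(\Gamma)$ is aspherical --- its universal cover is $P_d(\widetilde\Gamma)$ for the tree $\widetilde\Gamma$ covering $\Gamma$, and Rips complexes of trees are contractible --- and that $\Gamma\hookrightarrow P_d(\Gamma)$ is a $\pi_1$-isomorphism, so that the inclusion is a homotopy equivalence by Whitehead's theorem; but this shortcut does not by itself deliver the proper, uniformly continuous homotopy the lemma requires.
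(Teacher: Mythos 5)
Your approach is genuinely different from the paper's. Where you propose to build the retraction inductively over skeleta, coning each simplex into its carrying subtree $T_\sigma$ by contractions that must be chosen coherently across faces, the paper writes down a single closed formula: for $x\in\sigma$ with barycentric coordinates $(t_0,\dots,t_k)$ on vertices $v_0,\dots,v_k$, it sets
$$r(x) := \argmin_{y\in\Gamma}\ \sum_{i=0}^{k} t_i\, d(y,v_i)^2.$$
This weighted least-squares minimizer is automatically coherent under passage to faces, because if $x$ lies on a face $\tau \leq \sigma$ the coordinates of the missing vertices vanish and the formula involves only the vertices of $\tau$. The girth condition enters to prove the minimizer is unique (two minimizers would span a geodesic whose midpoint minimizes at least as well, forcing them to coincide), and the homotopy is then the straight line $H(x,t)=tx+(1-t)r(x)$, well defined because $x$ and $r(x)$ are shown to lie in a common simplex. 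Continuity of $r$ and a Lipschitz bound (giving both uniform continuity and properness) are verified by explicit computation on edges and by matching minimizers across adjacent simplices.

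The gap in your proposal is exactly the one you flag: the coherence of the fibrewise tree contractions. Geodesic coning to a base vertex chosen from a fixed well-ordering will not restrict consistently, since a face $\tau<\sigma$ generally has a different least vertex than $\sigma$, so its cone point differs and the homotopies disagree along $\tau$; some genuinely new idea is needed to make the local choices compatible. You correctly identify this as the technical heart but leave it open, so as written the argument does not go through. The paper's weighted-minimizer formula is precisely the ``definite recipe'' you ask for, and once it is in hand the skeletal induction is unnecessary. The remainder of your outline --- the subtrees $T_\sigma$ produced by the girth hypothesis, locality of the homotopy giving properness and uniform continuity, and the Whitehead consistency check --- is sound and matches the spirit of the paper's supporting arguments.
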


\begin{proof}
	Let $\sigma \subseteq P_d(\Gamma)$ be a simplex.
	By definition, $\sigma$ is spanned by a set $\{v_0, \dots, v_k\}$ of vertices of diameter $\leq d$.
	Let $(t_0, \dots, t_n) \in \Delta^k$ represent a point $x \in \sigma$.
	Define $r: P_d(\Gamma) \rightarrow \Gamma$ via
	$$ r(x) := \argmin_{y \in \Gamma} \sum_{i=0}^{k} t_i d(y, v_i)^2. $$
	Since $\sigma$ is compact, the sum has a minimum and we will see that its argument is unique.
	Also, $r$ is clearly a retract of the inclusion $\Gamma \subseteq P_d(\Gamma)$.
	We will show that $r$ is continuous and therefore, as defined on simplices, uniformly continuous.
	The homotopy $H: P_d(\Gamma) \times [0,1] \rightarrow P_d(\Gamma)$ given by
	$$ H(x,t) = x + (1-t) r(x) $$
	is well-defined (for all $x \in \sigma \subseteq P_d(\Gamma)$ there is a simplex $\overline \sigma \subseteq P_d(\Gamma)$ containing both $x$ and $r(x)$, see below).
	It homotopes $r$ to $\id_{P_d(\Gamma)}$.
	
	\vspace{1ex}
	\emph{Uniqueness} of $\argmin_{y \in \Gamma} \sum_{i=0}^{k} t_i d(y, v_i)^2$.
	Assume there are two points $y',y'' \in \Gamma$ minimizing $\sum_{i=0}^{k} t_i d(y, v_i)$.
	Because of the girth condition there is a unique geodesic in $\Gamma$ connecting $y'$ and $y''$.
	Let $y \in \Gamma$ be the point in the middle of the geodesic.
	It is easy to see that
	$$ 2 \cdot t_i \underset{A}{\underbrace{d(y, v_i)}}^2 \leq t_i \underset{B}{\underbrace{d(y', v_i)}}^2 + t_i \underset{C}{\underbrace{d(y'', v_i)}}^2.$$

	This situation can be depicted as follows, where $A = a + b''$, $B = a+b'$, $C = a+b''+b$.
	\begin{center}
		\begin{picture}(150,40)
		\put(0,0){\includegraphics{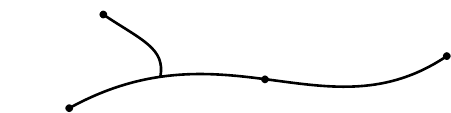}}
		\put(15,-10){$y'$}
		\put(73,-2){$y$}
		\put(131,7){$y''$}
		\put(25,32){$v_i$}
		\put(46,20){$a$}
		\put(32,-4){$b'$}
		\put(55,-1){$b''$}
		\put(100,10){$b$}
		\end{picture}
	\end{center}
	\begin{align*}
 2(a+b'')^2 &\leq (a+b')^2 + (a+b''+b)^2 \\
	\Leftrightarrow \quad\quad\quad 0 &\leq 4ab' + 2b''b' + b^2 + (b'')^2
	\end{align*}
	
	We conclude that $y$ minimizes $\sum_{i=0}^{k} t_i d(y, v_i)$ as well and that $b = 0$ because otherwise $y'$ and $y''$ would not have been minimizing in the first place.
	Thus, $y' = y = y''$.
	
	\vspace{1ex}
	\emph{Continuity} of $r(x)$. 
	Note that \vspace{-1ex}
	$$r(x) = \argmin \bigg\{ \sum_{i=0}^{k} t_i d(y, v_i)^2 \,\bigg|\, y \in \tau \subseteq \Gamma \text{ simplex}\bigg\},$$
	meaning that we can determine the minimizers for all simplices (i.e. edge or vertex) of the graph and then pick the (unique) minimizing simplex.
	If the simplex is an edge $\tau = \langle v_a, v_b \rangle$, then the value $r_\tau(x) = \argmin_{y \in \tau} \sum_{i=0}^{k} t_i d(y, v_i)^2$ can be calculated using basic analysis:
	Points in $\tau$ are parametrized by $s \cdot v_a + (1-s) \cdot v_b$.
	To find the minimizing parameter $s_0$, consider the derivative of $\sum_{i=0}^k t_i d(y,v_i)^2$ using the parametrization.
	As the zero of the first derivative of 
	$$ s \mapsto \sum_{v_i \text{ near } v_a} t_i(d(v_i, v_a) + s)^2 \;\;\; + \sum_{v_i \text{ near } v_b} t_i(d(v_i, v_b) + 1 - s)^2$$
	we get \vspace{-1ex}
	\begin{equation*}
	s_0 = \sum_{v_i \text{ near } v_b} t_i (d(v_i, v_b) + 1) \;\;\; - \sum_{v_i \text{ near } v_a} t_i d(v_i, v_a) \tag{$\ast$}
	\end{equation*}
	and therefore \vspace{-1ex}
	$$ r_\tau(x) = \begin{cases}
	v_a & \phantom{0 \leq\;} s_0 \leq 0 \\
	(1-s_0) \cdot v_a + s_0 v_b & 0 \leq s_0 \leq 1 \\
	v_b & 1 \leq s_0.
	\end{cases}$$
	These maps are continuous.
	If the minimizing point changes from one simplex to another, it must happen on a common vertex.
	Otherwise, because of the continuity of the $r_\tau$, there would be two distinct minimizers contradicting the uniqueness.
	
	\vspace{1ex}
	\emph{Common simplex for $x$ and $r(x)$}.\\
	Consider $x \in \sigma = \langle v_0, \dots, v_k \rangle \subseteq P_d(\Gamma)$.
	For all $0 \leq i \leq k$, there is $0 \leq j \leq k$ such that $r(x)$ lies on the geodesic connecting $v_i$ and $v_j$.
	Otherwise we would have the situation depicted in the following diagram, which clearly shows that $r(x)$ is not optimal in that case.
	\begin{center}
		\vspace{4em}
		\begin{tikzpicture}
			\put(0,0){\includegraphics{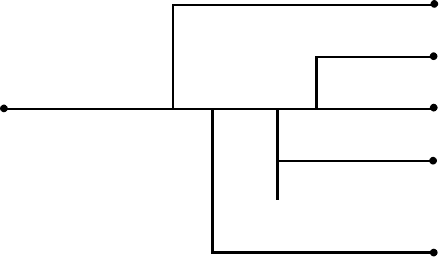}}
			\put(-10,50){$r(x)$};
			\put(129,70){$v_0$};
			\put(129,55){$v_1$};
			\put(129,40){$v_2$};
			\put(129,25){$v_3$};
			\put(90,14){$\dots$};
			\put(129,0){$v_k$};
			\draw[line cap=round, line width = 0.75pt,decorate,decoration={brace,amplitude=5pt}]
			(1.8,1.4) -- node[below,inner sep=6pt] {\(\scriptstyle \text{too much} \)} (0,1.4);
		\end{tikzpicture}~~~~~~~~~~~~~~~~~~~~~~~~
		\vspace{2em}
	\end{center}			
	
	All vertices of the geodesic $v_i \rightarrow v_j$ together with $\{v_0, \dots, v_k\}$ have diameter $\leq d$ and therefore span a simplex $\overline \sigma$ containing both $x$ and $r(x)$.
\end{proof}

\begin{remark}
	\label{rem:lipschitz}
	Equation ($\ast$) shows that the map $r: P_d(\Gamma) \rightarrow \Gamma$ is somewhat linear.
	Its operator norm is $\max \{d(v_i,v_a) + 1\} \cup \{d(v_i, v_b)\} \leq d + 1$.
	For points $x$ and $y$ in different simplices of $P_d(\Gamma)$ of distance $d(x,y) > 1$, we have $d(r(x), r(y)) \leq d \cdot d(x,y)$ because the set of vertices of a simplex in $P_d(\Gamma)$ has diameter $\leq d$.
	We conclude that $r$, and therefore $H$, are Lipschitz with Lipschitz-constant $d+1$.
\end{remark}

\begin{definition}
	Let $\calC$ be an additive category and $\calA \subseteq \calC$ a full additive subcategory. The \emph{quotient category} $\bigslant\calC\calA$ is the category that has the same objects as $\calC$, but
	$$\Hom_{\calC/\calA}(X,Y) := \bigslant{\Hom_\calC(X,Y)}{\sim}$$
	as morphism groups, where $f \sim g$ if and only if $f-g$ factors through an object in $\calA$, i.e.\ if there exists a commutative diagram in $\calC$ \vspace{-1em}
	\begin{center}
		\begin{tikzcd}[row sep=1em, column sep=1.5em]
		X \arrow[rr, "f-g"] \arrow[dr] && Y \\ & A \arrow[ur]
		\end{tikzcd}
	\end{center}
	with $A \in \calA$. This defines an additive category via $[f] \comp [g] := [f \comp g]$ and $[f] + [g] := [f + g]$. 
	Also, the canonical functor $\pr: \calC \rightarrow \bigslant{\calC}{\calA}$ is compatible with direct sums (cf. \cite{Ramras.2018}).
\end{definition}

\begin{notation}
	For $\varepsilon$-filtered categories $\calA_i$ we use the abbreviation $$\frac{\prod^\bd_i}{\bigoplus_i} \calA_i  := \bigslant{\prod^\bd_i \calA_i}{\bigoplus_i \calA_i}.$$
	Similarly, given a metric space $X$ with isometric $G$-action, we write $\calO^\gg_{G}(X; \calA)$ for the quotient $\bigslant{\calO_{G}^\lf(X; \calA)}{\calO_{G}(X; \calA)}$.
\end{notation}

\begin{proposition}
	\label{prop:goal_without_rips}
	Let $\Gamma_n$ be a sequence of finite graphs with large girth and bounded geometry, $\widetilde\Gamma_n$ its sequence of universal covers and $G_n := \pi_1(\Gamma_n)$ their fundamental groups.
	Conjecture \ref{conj:main_conjecture} for $X = \bigsqcup_{n \in \bbN} \Gamma_n$ is equivalent to
	$$ K_*\left(\frac{\prod^\bd_{n\in \bbN}}{\bigoplus_{n \in \bbN}} \calO_{G_n}(\widetilde\Gamma_n; \calA)\right) = 0.$$
\end{proposition}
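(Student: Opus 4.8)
The plan is to combine the standard reduction of the bounded isomorphism conjecture to the vanishing of an ``obstruction term'' with the geometric input of Lemma~\ref{lem:rips_complex_of_tree}. By \cite[Section~3]{Zeggel.2021} the coarse assembly map for $X$ sits in the long exact $K$-theory sequence of the pair $\big(\calO^\lf(P_d(X);\calA),\calT^\lf(P_d(X);\calA)\big)$, and since $\colim_{d\geq 0}K_*\big(\calT^\lf(P_d(X);\calA)\big)$ computes $K_*\big(\calC^\lf(X;\calA)\big)$, Conjecture~\ref{conj:main_conjecture} for $X$ is equivalent to $\colim_{d\geq 0}K_*\big(\calO^\lf(P_d(X);\calA)\big)=0$. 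So it suffices to establish an isomorphism
\begin{equation*}
\colim_{d\geq 0}K_*\big(\calO^\lf(P_d(X);\calA)\big)\;\cong\;K_*\!\left(\frac{\prod^\bd_{n\in\bbN}}{\bigoplus_{n\in\bbN}}\calO_{G_n}(\widetilde\Gamma_n;\calA)\right).
\end{equation*}

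Since $K$-theory commutes with filtered colimits it is enough to work with the colimit category $\colim_d\calO^\lf(P_d(X);\calA)$. As $X=\bigsqcup_n\Gamma_n$ with the $\Gamma_n$ finite and pairwise far apart, $P_d(X)=\bigsqcup_n P_d(\Gamma_n)$ with each $P_d(\Gamma_n)$ compact; every morphism is uniformly $\varepsilon$-controlled in the $P_d(X)$-direction for some $\varepsilon$, hence restricts to a morphism of $\calO^\lf(P_d(\Gamma_n);\calA)$ on each component, while the objects supported on finitely many components form a full subcategory which will become the $\bigoplus_n$ in the quotient. For a morphism living at level $d$ and $\varepsilon$-controlled, all but finitely many $n$ satisfy $\frac12\girth(\Gamma_n)>\max(d,\varepsilon)$; for those $n$ I would transport the data first along the Lipschitz retraction $r_n\colon P_d(\Gamma_n)\to\Gamma_n$ of Lemma~\ref{lem:rips_complex_of_tree} — a coarse equivalence that rescales control only by the factor $d+1$ of Remark~\ref{rem:lipschitz}, which is harmless since $d$ is fixed — obtaining an equivalence $\calO^\lf(P_d(\Gamma_n);\calA)\simeq\calO^\lf(\Gamma_n;\calA)$, and then along the covering projection $\widetilde\Gamma_n\to\Gamma_n$, which is a local isometry restricting to an isometry on balls of radius $<\frac12\girth(\Gamma_n)$. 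Since $\widetilde\Gamma_n$ is $G_n$-cocompact and $\calA$ carries the trivial $G_n$-action, the induction/pullback construction along this covering yields an equivalence $\calO_{G_n}(\widetilde\Gamma_n;\calA)\simeq\calO^\lf(\Gamma_n;\calA)$ preserving $\varepsilon$-control in the relevant range and leaving the $\bbN$-direction untouched. Assembling these component-wise equivalences over all large $n$ and discarding the finitely many remaining components produces the comparison functor $F\colon\colim_d\calO^\lf(P_d(X);\calA)\to\frac{\prod^\bd_n}{\bigoplus_n}\calO_{G_n}(\widetilde\Gamma_n;\calA)$.

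That the induced map on $K$-theory is an isomorphism comes down to the ``bad'' components not mattering: $F$ realizes its target (up to the subtlety discussed below) as the quotient of $\colim_d\calO^\lf(P_d(X);\calA)$ by the full subcategory of finitely supported objects, namely $\colim_d\bigoplus_n\calO^\lf(P_d(\Gamma_n);\calA)$. Because filtered colimits commute with direct sums its $K$-theory is $\bigoplus_n\colim_d K_*\big(\calO^\lf(P_d(\Gamma_n);\calA)\big)$, and each summand vanishes, being the obstruction term of Conjecture~\ref{conj:main_conjecture} for the bounded space $\Gamma_n$ — bounded spaces have finite decomposition complexity and are covered by \cite{Ramras.2018}. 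Feeding this into the localization sequence attached to the filtration by finitely supported objects, i.e. $\bigoplus_n\hookrightarrow\prod^\bd_n\twoheadrightarrow\frac{\prod^\bd_n}{\bigoplus_n}$ (an exact sequence of additive categories, cf.~\cite{Ramras.2018,Carlsson.Pedersen.1995}), yields the desired isomorphism and hence the equivalence of the two forms of the conjecture. This is also exactly where the quotient by $\bigoplus_n$ is genuinely needed: it encodes that large girth lets us ignore finitely many components at a time.

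The step I expect to be the main obstacle is the decomposition of $\calO^\lf(\bigsqcup_n P_d(\Gamma_n);\calA)$ itself, and specifically the convergence condition~($\ast$): a morphism over $\bigsqcup_n P_d(\Gamma_n)$ satisfies ($\ast$) with a single threshold $t_0$ valid simultaneously for all components, whereas a morphism of the bounded product $\prod^\bd_n\calO^\lf(P_d(\Gamma_n);\calA)$ only satisfies ($\ast$) separately on each factor, so the quotient of $\colim_d\calO^\lf(P_d(X);\calA)$ by its finitely supported objects is a priori only a subcategory of $\frac{\prod^\bd_n}{\bigoplus_n}\calO_{G_n}(\widetilde\Gamma_n;\calA)$. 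Showing that this inclusion is an equivalence after passing to $K$-theory — equivalently, that the discrepancy in the convergence condition is $K$-acyclic — is the technical heart of the argument; I expect it to require a controlled-product refinement in the spirit of Definition~\ref{def:controlled_product} together with an Eilenberg-swindle argument in the $\bbN$-direction, while throughout one must keep careful track of the Lipschitz constants $d+1$ introduced by the retractions $r_n$ so that ``uniformly $\varepsilon$-controlled'' survives the chain of equivalences.
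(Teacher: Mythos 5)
Your proposal follows essentially the same route as the paper --- Lipschitz retraction from Lemma~\ref{lem:rips_complex_of_tree} with the $n$-independent constant from Remark~\ref{rem:lipschitz}, component-wise passage from $\Gamma_n$ to $\widetilde\Gamma_n$ via the covering projections, and quotienting away the finitely supported objects --- but it leaves unresolved the step you yourself flag as ``the technical heart,'' and that is precisely where the paper outsources the work to \cite[Lemma~4.8]{Zeggel.2021}.

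Two concrete observations. First, the paper works with $\calO^\gg = \calO^\lf/\calO$ rather than $\calO^\lf$, and this is load-bearing: since each $\Gamma_n$ is compact, an object supported on finitely many components is automatically compactly supported, so deleting finitely many components does not change $K_*(\calO^\gg)$. That disposes of the ``bad'' $n$ immediately; invoking \cite{Ramras.2018} and finite decomposition complexity for the bounded pieces is correct but heavier than necessary (an Eilenberg swindle in the $\bbN$-direction already gives $K_*(\calO(Y;\calA))=0$ for compact $Y$, which is what makes $K_*(\calO^\lf)\cong K_*(\calO^\gg)$). Second, and more seriously, the identification of $K_*(\calO^\gg(X;\calA))$ with $K_*\big(\frac{\prod^\bd_n}{\bigoplus_n}\calO_{G_n}(\widetilde\Gamma_n;\calA)\big)$ --- where the convergence condition~(\ref{eq:convergence}) passes from a single uniform threshold $t_0$ over $\bigsqcup_n\Gamma_n$ to a separate threshold in each factor, and the trivial-group category over $\Gamma_n$ is traded for the $G_n$-equivariant category over $\widetilde\Gamma_n$ --- is exactly the content of \cite[Lemma~4.8]{Zeggel.2021}. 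The paper's proof consists only in verifying the hypothesis of that lemma: asymptotic faithfulness of the covering maps $p_n\colon\widetilde\Gamma_n\to\Gamma_n$, which holds with $R_n=\girth(\Gamma_n)/4$. You correctly isolate this step as the crux and correctly guess its flavor (controlled-product refinement plus a swindle in the $\bbN$-direction), but the argument is not carried out, so as written the proposal has a genuine gap at the same point the paper delegates to its reference.
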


\begin{proof}
	For a fixed $d$ there is a homotopy equivalence $P_d(\Gamma_n) \simeq \Gamma_n$ for all $n \geq N_0$ for some $N_0 \in \bbN$.
	By Remark \ref{rem:lipschitz} we know that these homotopies are Lipschitz with Lipschitz constant independent of $n$.
	This implies $\bigsqcup_{n \geq N_0} P_d(\Gamma_n) \simeq \bigsqcup_{n \geq N_0} \Gamma_n$, yielding
	\begin{align*}
	K_*\big(\calO^\gg(\bigsqcup\nolimits_{n \in \bbN} P_d(\Gamma_n); \calA)\big) 
	&\cong K_*\big(\calO^\gg(\bigsqcup\nolimits_{n \geq N_0} P_d(\Gamma_n); \calA)\big) \\
	&\cong K_*\big(\calO^\gg(\bigsqcup\nolimits_{n \geq N_0} \Gamma_n; \calA)\big) 
	\cong K_*\big(\calO^\gg(\bigsqcup\nolimits_{n \in \bbN} \Gamma_n; \calA)\big).
	\end{align*}
	This and \cite[Lemma 4.8]{Zeggel.2021} imply
	\todo[inline]{How do I cite my unpublished paper?}
	$$K_*\big(\calO^\gg(P_d(X); \calA)\big) \cong K_*\big(\calO^\gg(X; \calA)\big) \cong K_*\left(\frac{\prod^\bd_{n\in \bbN}}{\bigoplus_{n \in \bbN}} \calO_{G_n}(\widetilde\Gamma_n; \calA)\right)$$
	for every $d \geq 0$ which proves the claim.
	In order to use \cite[Lemma 4.8]{Zeggel.2021}, we must show that the sequence of universal covering maps $p_n: \widetilde\Gamma_n \rightarrow \Gamma_n$ is asymptotically faithful, i.e.\ there is a sequence of numbers $R_n > 0$ with $R_n \uset{n \rightarrow \infty}{\longrightarrow} \infty$ such that
	$$p_n|_{B_{R_n}(x)} : B_{R_n}(x) \rightarrow \Gamma_n$$
	is an isometric embedding for all $n \in \bbN$ and $x \in \widetilde \Gamma_n$.
	In this particular case we can choose $R_n := \frac{\girth(\Gamma_n)}{4}$.
\end{proof}
	\section{Proof Outline}
\label{sec:proof_outline}
	
Let $\Gamma_n$ be a sequence of finite graphs with bounded geometry and large girth.
Let $\widetilde\Gamma_n$ be the universal covering of $\Gamma_n$ and $G_n := \pi_1(\Gamma_n)$ its fundamental group.

In order to prove Theorem \ref{thm:main_theorem} we show $K_*\left(\frac{\prod^\bd_{n\in \bbN}}{\bigoplus_{n \in \bbN}} \calO_{G_n}(\widetilde\Gamma_n; \calA)\right) = 0$.
We do this by showing that the identity factors through zero.
Using a transfer map and long-and-thin covers we construct the following commutative diagram.
\begin{center}
	\label{dia:proof_outline}
	\begin{tikzcd}[column sep=1.5em]
		K_*\left(\frac{\prod^\bd_{n\in \bbN}}{\bigoplus_{n \in \bbN}} \calO_{G_n}(\widetilde\Gamma_n \mid (V_n \times \overline{\widetilde\Gamma}_n, d_{C(n)}) ; \calA)\right)
		\arrow[r, "f_*"] \arrow[dr, "\Pr"] &
		K_*\left(\frac{\contr_{n\in\bbN}}{\bigoplus_{n\in\bbN}} \calO_{G_n}(\widetilde\Gamma_n \mid \Sigma_n ; \calA)\right) \arrow[d, "\Pr"] & \hspace{-2.5em}=0 \\
		K_*\left(\frac{\prod^\bd_{n\in \bbN}}{\bigoplus_{n \in \bbN}} \calO_{G_n}(\widetilde\Gamma_n; \calA)\right) \arrow[u, "\trans_*"] \arrow[r, "\id"] &
		K_*\left(\frac{\prod^\bd_{n\in \bbN}}{\bigoplus_{n \in \bbN}} \calO_{G_n}(\widetilde\Gamma_n; \calA)\right)
	\end{tikzcd}
\end{center}
Our roadmap is clear.
We have to do the following steps:
\begin{enumerate}
	\item Define metric $d_{C(n)}$ on the space $V_n \times \overline{\widetilde\Gamma}_n$. 
	(See Definition \ref{def:equivariant_metric}. The numbers $C(n)$ are chosen based on Lemma \ref{lem:thicken_thin_cover}.)
	\item Define the spaces $\Sigma_n$ and the functor $f_*$ such that the upper right triangle commutes. 
	(See Corollary \ref{cor:upper_right_triangle}.)
	\item Define $\trans_*$ such that the lower left triangle commutes.
	(See Lemma \ref{lem:transfer_commutatve}.)
	\item Show $K_*\bigg(\frac{\contr_{n\in\bbN}}{\bigoplus_{n\in\bbN}} \calO_{G_n}(\widetilde\Gamma_n \mid \Sigma_n ; \calA)\bigg) = 0$.
	(See Lemma \ref{lem:trivial_k_theory} and Lemma \ref{lem:proof_of_assumption}.)
\end{enumerate}
	
	\section{The Metric (Step 1)}

Again, we fix a sequence of finite graphs $\Gamma_n$ with bounded geometry and large girth.
Let $V_n \subseteq \Gamma_n$ be the set of vertices.
The universal cover $\widetilde\Gamma_n$ of $\Gamma_n$ is also a graph and we denote its set of vertices with $\widetilde{V}_n$.
As before, we define $G_n := \pi_1(\Gamma_n)$.
Since $\widetilde{\Gamma}$ is a tree, we can compactify it by adding its boundary at infinity.

\begin{definition}[Gromov boundary]
	\label{def:boundary}
	Let $T = (V,E)$ be a tree and $v \in V$. Its \emph{boundary} is given by 
	$$\partial T = \{(v_n)_{n\in\bbN} \where v = v_0 \text{, } (v_{i-1}, v_i) \in E \text{ and } v_{i-1} \neq v_{i+1}\}$$
	i.e.\ the set of all (local hence global) geodesics starting at $v$.
	
	For $\zeta = (v_n)_n, \xi = (w_n)_n \in \partial T$ define $(\zeta, \xi)_v := \sup \{n \in \bbN \where v_n = w_n \}$.
	This induces a metric $d_v$ on $\partial T$ by
	$$d_v(\zeta, \xi) := e^{-(\zeta, \xi)_v}.$$
	This metric can be extended to a metric on $V \cup \partial T$ by regarding $V$ as the set
	$$V = \{(v_n)_{n\in\bbN} \where v = v_0 \text{, } \exists k \in \bbN\, \forall i \leq k\!: [ (v_{i-1}, v_i) \in E \,\wedge\, v_{i-1} \neq v_{i+1} ], \forall i > k\!: v_{i-1} = v_i \}$$
	i.e.\ the set of all geodesics that are eventually constant. These eventually constant geodesics can be identified with their endpoints. The product $(\xi, \zeta)_v$ then has the exact same definition for $\zeta, \xi$ in $V \cup \partial T$.
\end{definition}

\begin{definition}
	\label{def:equivariant_metric}
	Choose a $G_n$-equivariant map $q_n: \widetilde V_n \rightarrow G_n$ such that the diameter of $q_n^{-1}(1)$ is bounded from above by $2 \cdot \diam(\Gamma_n)$.
	For any $C > 0$ and any metric $d_{\overline {\widetilde{\Gamma}}_{n}}$ on $\overline {\widetilde{\Gamma}}_{n}$, we define the metric $d_C$ on $\widetilde V_n \times \overline {\widetilde{\Gamma}}_{n}$ via\footnote{Please note that both $q_n(...)^{-1}$ and $q_n^{-1}(...)$ appear in this article and should not be confused.}
	$$ ((v, \xi), (w, \zeta)) \mapsto \inf \sum_{i=1}^m C \cdot d_{\overline{\widetilde{\Gamma}}_n}(q_n(v_i)^{-1}. \xi_{i-1}, q_n(v_i)^{-1}. \xi_i) + d_{\widetilde V_n}(v_{i-1}, v_i)$$
	where the infimum is taken over all finite sequences 
	$$(v, \xi) = (v_0, \xi_0), (v_1, \xi_1), \dots, (v_{m-1}, \xi_{m-1}), (v_m, \xi_m) = (w, \zeta).$$
	For $v \in \widetilde V_n$ we define the (non-equivariant) metric $d_v$ on $\overline{\widetilde{\Gamma}}_n$ via
	$$d_v(\xi, \zeta) := d_C((v, \xi), (v, \zeta)).$$
\end{definition}

\begin{remark}
	\label{rem:equivariant_metric}
	This metric is indeed a metric. It satisfies the triangle inequality, more or less by definition, and to see that it is symmetric, do the following: If $(v_0, \xi_0), \dots, (v_m, \xi_m)$ is a sequence determining $d_C((v, \xi), (w, \zeta)$, then $(v_0', \xi_0'), \dots, (v_{m+1}', \xi_{m+1}')$, given by
	$$v_i' = \begin{cases}
	v_{m-i+1} & i > 0 \\
	v_m & i = 0
	\end{cases} \quad \text{ and } \quad \xi_i' := \begin{cases}
	\xi_{m-i} & i \leq m \\
	\xi_0 & i = m+1
	\end{cases}$$
	shows that $d_C((w, \zeta), (v, \xi)) \leq d_C((v, \xi), (w, \zeta)$. Symmetry reasons imply equality. The following facts can be seen immediately.
	\begin{enumerate}
		\item The metric $d_C$ is $G_n$-invariant.
		\item $d_{\widetilde V_n}(v,w) \leq d_C((v, \xi), (w, \zeta))$ for all $v,w \in \widetilde V_n$ and $\xi, \zeta \in \overline{\widetilde{\Gamma}}_n$.
		\item $d_{\widetilde V_n}(v,w) = d_C((v, \xi), (w, \xi))$ for all $v,w \in \widetilde V_n$ and $\xi \in \overline{\widetilde{\Gamma}}_n$.
	\end{enumerate}
	From 3.\ and the triangle inequality we easily deduce
	\begin{enumerate}
		\setcounter{enumi}{3}
		\item $d_v(\xi, \zeta) \leq d_w(\xi, \zeta) + 4 \cdot \diam(\Gamma_n)$ for all $v,w \in \widetilde V_n$ with $q_n(v) = q_n(w)$ and $\xi, \zeta \in \overline{\widetilde{\Gamma}}_n$.
	\end{enumerate}
\end{remark}

\begin{lemma}[Cf. Lemma 5.1 in \cite{Bartels.2007}]
	\label{lem:thicken_thin_cover}
	Let $\alpha \geq 1$. Suppose that $\calU$ is a free $G_n$-invariant cover of $\widetilde V_n \times \overline{\widetilde{\Gamma}}_n$ such that for all $(v, \xi) \in \widetilde V_n \times \overline{\widetilde{\Gamma}}_n$ there is $U \in \calU$ such that $B_\alpha(v) \times \{\xi\} \subseteq U$. Then there exists a constant $C = C(\calU) > 1$ such that the following holds:
	
	For every $(v,\xi) \in \widetilde V_n \times \overline{\widetilde{\Gamma}}_n$ there exists $U \in \calU$ such that the open $\alpha$-ball with respect to the metric $d_C$ around $(v, \xi)$ lies in $U$.
\end{lemma}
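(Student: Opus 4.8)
The plan is to run a Lebesgue-number argument on the compact space $\overline{\widetilde{\Gamma}}_n$, after using $G_n$-equivariance to trivialise the $\widetilde V_n$-direction. Write $B^d_r(x)$ for the open $r$-ball around $x$ in a metric $d$. Since $d_C$ is $G_n$-invariant (Remark~\ref{rem:equivariant_metric}), $\calU$ is $G_n$-invariant, and $G_n$ permutes the balls $B_\alpha(v)$ in $\widetilde V_n$ isometrically, the assertion for $(v,\xi)$ is equivalent to the one for $(g.v,g.\xi)$ — a witness $U$ for the former yields the witness $g.U \in \calU$ for the latter. By equivariance of $q_n$, every $v \in \widetilde V_n$ equals $q_n(v).v'$ with $v' := q_n(v)^{-1}.v$ satisfying $q_n(v') = 1$. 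Hence it suffices to find one constant $C = C(\calU) > 1$ ($n$ and $\alpha$ fixed throughout) that works for all $(v,\xi)$ with $v$ in the \emph{finite} set $q_n^{-1}(1)$ — finite because $\diam(q_n^{-1}(1)) \leq 2\diam(\Gamma_n)$ and $\widetilde{\Gamma}_n$ is locally finite.

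First I would pin down the shape of a small $d_C$-ball: I claim there is $\lambda = \lambda(n,\alpha) \geq 1$ such that
$$ B^{d_C}_\alpha(v,\xi) \;\subseteq\; B_\alpha(v) \times B^{\,d_{\overline{\widetilde{\Gamma}}_n}}_{\lambda\alpha/C}(\xi) \qquad \text{whenever } v \in q_n^{-1}(1). $$
To see this, let $d_C((v,\xi),(w,\zeta)) < \alpha$ and choose a representing sequence $(v_0,\xi_0),\dots,(v_m,\xi_m)$ from $(v,\xi)$ to $(w,\zeta)$ of total cost $< \alpha$. Its $\widetilde V_n$-summands alone sum to $< \alpha$, so $d_{\widetilde V_n}(v,v_i) < \alpha$ for every $i$ (in particular $w \in B_\alpha(v)$, which is also Remark~\ref{rem:equivariant_metric}(2)); as $v \in q_n^{-1}(1)$, this confines every $v_i$ to the finite set of vertices within distance $\alpha$ of $q_n^{-1}(1)$, hence confines every $q_n(v_i)$ to a finite subset $S \subseteq G_n$. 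The only feature of $d_{\overline{\widetilde{\Gamma}}_n}$ I would use is that left translation by an element of the finite set $S$ distorts $d_{\overline{\widetilde{\Gamma}}_n}$ by at most some factor $\lambda = \lambda(S)$ — a property enjoyed, e.g., by the metric $d_v$ of Definition~\ref{def:boundary}. Then the $\overline{\widetilde{\Gamma}}_n$-summands of the cost are at least $C\lambda^{-1}\sum_i d_{\overline{\widetilde{\Gamma}}_n}(\xi_{i-1},\xi_i) \geq C\lambda^{-1} d_{\overline{\widetilde{\Gamma}}_n}(\xi,\zeta)$, so $d_{\overline{\widetilde{\Gamma}}_n}(\xi,\zeta) < \lambda\alpha/C$, proving the claim.

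Next I would invoke compactness. For a fixed $v \in q_n^{-1}(1)$ and each $U \in \calU$, the set $W^v_U := \set{\zeta \in \overline{\widetilde{\Gamma}}_n}{B_\alpha(v)\times\{\zeta\} \subseteq U}$ is open, being the intersection over the finitely many $w \in B_\alpha(v)$ of the open slices $\set{\zeta}{(w,\zeta) \in U}$; and the hypothesis of the lemma says precisely that $\set{W^v_U}{U \in \calU}$ covers $\overline{\widetilde{\Gamma}}_n$. Since $\overline{\widetilde{\Gamma}}_n$ is compact, the Lebesgue number lemma supplies $\delta_v > 0$ such that every $d_{\overline{\widetilde{\Gamma}}_n}$-ball of radius $\delta_v$ lies inside some $W^v_U$; set $\delta := \min_{v \in q_n^{-1}(1)} \delta_v > 0$ and choose $C = C(\calU) > \max\{1, \lambda\alpha/\delta\}$. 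Then for every $(v,\xi)$ with $v \in q_n^{-1}(1)$, picking $U \in \calU$ with $B^{d_{\overline{\widetilde{\Gamma}}_n}}_\delta(\xi) \subseteq W^v_U$ and combining with the inclusion above gives
$$ B^{d_C}_\alpha(v,\xi) \;\subseteq\; B_\alpha(v)\times B^{d_{\overline{\widetilde{\Gamma}}_n}}_{\lambda\alpha/C}(\xi) \;\subseteq\; B_\alpha(v)\times B^{d_{\overline{\widetilde{\Gamma}}_n}}_{\delta}(\xi) \;\subseteq\; U, $$
which together with the equivariance reduction of the first paragraph proves the lemma.

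I expect the second step to be the main obstacle: one must tame the ``twist'' $q_n(v_i)^{-1}$ appearing in the definition of $d_C$ along an almost-optimal path. The mechanism that makes it work is that a bound on the $\widetilde V_n$-part of the cost confines all intermediate vertices $v_i$ to a fixed finite set \emph{once the base vertex has been moved into $q_n^{-1}(1)$}, so only finitely many group elements ever act on $\overline{\widetilde{\Gamma}}_n$ and their metric distortion is uniformly bounded; this, with the compactness of $\overline{\widetilde{\Gamma}}_n$, is what forces $C$ to depend on $\calU$ alone rather than on the chosen point.
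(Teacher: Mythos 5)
Your proposal follows the same blueprint as the paper's proof of Lemma~\ref{lem:thicken_thin_cover} (which itself is the argument of Bartels--L\"uck--Reich): reduce to basepoints in $q_n^{-1}(1)$ by $G_n$-invariance, observe that a cheap $d_C$-path keeps all intermediate vertices $v_i$ in the finite set $B_\alpha(q_n^{-1}(1))$ so that only finitely many group elements $q_n(v_i)$ ever act on $\overline{\widetilde\Gamma}_n$, and then close with a Lebesgue-number argument on the compact space $\overline{\widetilde\Gamma}_n$. Your Lebesgue step with the sets $W^v_U$ is correct and slightly slicker than the paper's $V_\xi = \bigcap_{v} V_{v,\xi}$.

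The genuine departure is the quantitative control step, and it is where your proof is strictly less general than the paper's. You bound $d_{\overline{\widetilde\Gamma}_n}(\xi,\zeta)$ by assuming that left translation by the finitely many elements $q_n(v_i)$ is bi-Lipschitz with a uniform constant $\lambda$; this yields $\sum_i d_{\overline{\widetilde\Gamma}_n}(\xi_{i-1},\xi_i) \leq \lambda\alpha/C$ in one line and makes it unnecessary to bound the number of steps $m$ in the representing sequence. But Definition~\ref{def:equivariant_metric} allows \emph{any} metric $d_{\overline{\widetilde\Gamma}_n}$ on $\overline{\widetilde\Gamma}_n$; compactness gives only uniform continuity of each $\xi \mapsto g.\xi$, not a Lipschitz bound, so the factor $\lambda$ need not exist. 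The paper's proof circumvents this by using uniform continuity to get a per-step bound $d_{\overline{\widetilde\Gamma}_n}(\xi_{i-1},\xi_i) < \varepsilon/\alpha$, and then separately proving $m \leq \alpha$: after deleting repeated vertices (legitimate by the triangle inequality), each surviving step contributes at least $1$ to the $\widetilde V_n$-part of the cost, which is $< \alpha$. Your proof does work for the visual metric $d_v(\zeta,\xi) = e^{-(\zeta,\xi)_v}$ of Definition~\ref{def:boundary}, since there $g$ acts $e^{d(v,g.v)}$-bi-Lipschitz, so it suffices for the application in this paper --- but you should either restrict the statement to bi-Lipschitz metrics, or add the step-counting estimate $m \leq \alpha$ so that uniform continuity alone carries the argument.
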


\begin{proof}
	The proof is the same as that of \cite[Lemma 5.1]{Bartels.2007}. There, the group $G$ acts on itself being a metric space with word metric. Here, we separate these two roles by replacing the metric space $G$ by $\widetilde V_n$ and having $G_n$ act on it.
	
	For every $\xi \in \overline{\widetilde{\Gamma}}_n$, we can find $U_\xi \in \calU$ with $B_\alpha(v_0) \times \{\xi\} \subseteq U_\xi$, where $v_0 \in \widetilde V_n$ is a choice of base point with $q_n(v_0) = 1$. Choose for $v \in B_\alpha(v_0)$ an open neighbourhood $V_{v, \xi}$ of $\xi \in \overline{\widetilde{\Gamma}}_n$ such that $\{v\} \times V_{v, \xi} \subseteq U_\xi$. Put $V_\xi := \bigcap_{v \in B_\alpha(v_0)} V_{v,\xi}$. Then $\{V_\xi \where \xi \in \overline{\widetilde{\Gamma}}_n \}$ is an open cover of the compact metric space $(\overline{\widetilde{\Gamma}}_n, d_{\overline{\widetilde{\Gamma}}_n})$. Let $\varepsilon > 0$ be a Lebesgue number for this open cover.
	
	Since $\overline{\widetilde{\Gamma}}_n$ is compact, the map $\overline{\widetilde{\Gamma}}_n \rightarrow \overline{\widetilde{\Gamma}}_n$, $\xi \mapsto g.\xi$ is uniformly continuous. Hence, we can find $\delta(\varepsilon, g) > 0$ such that $d_{\overline{\widetilde{\Gamma}}_n}(g.\zeta, g.\xi) < \frac{\varepsilon}{\alpha}$ holds for all $\zeta, \xi \in \overline{\widetilde{\Gamma}}_n$ with $d_{\overline{\widetilde{\Gamma}}_n}(\zeta, \xi) < \delta(\varepsilon, g)$. Since there are only finitely many elements in $B_{\alpha}(q_n^{-1}(1))$, we can choose a constant $C$ such that $\frac{\alpha}{C} < \delta(\varepsilon, q_n(v))$ holds for all $v \in B_{\alpha}(q_n^{-1}(1))$. We thus get
	$$d_{\overline{\widetilde{\Gamma}}_n}(q_n(v).\zeta, q_n(v).\xi) < \frac{\varepsilon}{\alpha}  \quad \text{ for $\zeta, \xi \in \overline{\widetilde{\Gamma}}_n$ with $d_{\overline{\widetilde{\Gamma}}_n}(\zeta, \xi) < \frac{\alpha}{C}$ and $v \in B_\alpha(q_n^{-1}(1))$.}$$
	
	Because $d_C$ and the cover $\calU$ are $G_n$-invariant, it suffices to prove the claim for an element of the shape $(v_0, \xi) \in \widetilde V_n \times \overline{\widetilde{\Gamma}}_n$ with $q_n(v_0) = 1$. Let $(w, \zeta)$ be an element in the ball of radius $\alpha$ around $(v_0, \xi)$ with respect to the metric $d_C$. We want to show $(w, \zeta) \in U_{\xi'}$ for some $\xi' \in \overline{\widetilde{\Gamma}}_n$ only depending on $v_0$, $\xi$ and $C$. By definition of $d_C$, we can find a sequence of elements $(v, \xi) = (v_0, \xi_0), (v_1, \xi_1), \dots, (v_{m-1}, \xi_{m-1}), (v_m, \xi_m) = (w, \zeta)$ in $\widetilde V_n \times \overline{\widetilde{\Gamma}}_n$ such that
	$$\sum_{i=1}^m d_{\widetilde V_n}(v_{i-1}, v_i) + \sum_{i=1}^m C \cdot d_{\overline{\widetilde{\Gamma}}_n}(q_n(v_i)^{-1}.\xi_{i-1}, q_n(v_i)^{-1}.\xi_i) < \alpha.$$
	If $m=1$, we have $m \leq \alpha$ by assumption.
	If $m>1$, we can arrange $v_i \neq v_{i+1}$: Whenever $v_i = v_{i+1}$, delete the element $(v_i, \xi_i)$ from the sequence. The inequality above remains true because of the triangle inequality for $d_{\overline{\widetilde{\Gamma}}_n}$. Since $d_{\widetilde V_n}(v_{i-1}, v_i) \geq 1$, we conclude
	$$m \leq \alpha.$$
	By the triangle inequality, $d_{\widetilde V_n}(v_0, v_i) \leq \alpha$ for $1 \leq i \leq m$. In other words, $v_i \in B_\alpha(v_0) \subseteq B_\alpha(q_n^{-1}(1))$ for $1 \leq i \leq m$.
	
	We have $d_{\overline{\widetilde{\Gamma}}_n}(q_n(v_i)^{-1}.\xi_{i-1}, q_n(v_i)^{-1}.\xi_i) < \frac{\alpha}{C}$ for $1 \leq i \leq m$. We conclude that
	$$d_{\overline{\widetilde{\Gamma}}_n}(\xi_{i-1}, \xi_i) < \frac{\varepsilon}{\alpha}$$
	holds for $1 \leq i \leq m$. The triangle inequality implies, together with $m \leq \alpha$,
	$$d_{\overline{\widetilde{\Gamma}}_n}(\xi, \zeta) < \varepsilon.$$
	Now there is $\xi' \in \overline{\widetilde{\Gamma}}_n$, with $\zeta \in V_{\xi'}$, only depending on $v_0$, $\xi$ and $C$ because the Lebesgue number of $\{V_\xi \where \xi \in \overline{\widetilde{\Gamma}}_n \}$ is $\varepsilon$. 
	Since $w \in B_\alpha(v_0)$, we conclude $\zeta \in V_{\xi'} \subseteq V_{w, \xi'}$. This implies $(w, \zeta) \in U_{\xi'}$.
\end{proof}
	\section{The Upper Right Triangle (Step 2)}

\begin{definition}[{\cite[Section 4.1]{Bartels.2007}} ]
	Let $(X,d)$ be a metric space. Let $\calU$ be a finite dimensional cover of $X$ by open sets. Recall that points in the realization of the nerve $|\calU|$ are formal sums $x=\sum_{U \in \calU} x_U U$, with $x_U \in [0,1]$, such that $\sum_{U \in \calU} x_U = 1$ and the intersection of all those $U$ with $x_U \neq 0$ is non-empty, i.e., $\{U \where x_U \neq 0 \}$ is a simplex in the nerve of $\calU$. There is a map
	$$ f^\calU : X \rightarrow |\calU|, \quad x \mapsto \sum_{U \in \calU} f_U(x) U, \vspace{-1ex}$$
	where 
	$$f_U(x) = \frac{a_U(x)}{\sum_{V \in \calU} a_V(x)} \quad \text{ with } \quad a_U(x) = d(x,Z \smallsetminus U) = \inf\{d(x,u) \where u \notin U\}.$$
	This map is well-defined since $U$ is finite dimensional. If $X$ is a metric space with an isometric $G$-action, then $d$ is $G$-invariant, and if $\calU$ is a free open $G$-cover, then the map $f=f^\calU$ is $G$-equivariant and $|\calU|$ has a free $G$-action.
	If $\calU$ is locally finite (i.e.\ every set in $\calU$ intersects only finitely many other sets in $\calU$), then $|\calU|$ is locally compact.
\end{definition}

\begin{lemma}
	\label{lem:proper_map_to_nerve}
	Given any proper metric space $X$ and locally finite open cover $\calU$ of $X$, the map
	$$f^\calU: X \rightarrow |\calU|$$
	is metrically proper if and only if $\,\calU$ consists solely of bounded sets.
\end{lemma}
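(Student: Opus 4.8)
The plan is to read off both implications from one bookkeeping identity. For $x\in X$ and $U\in\calU$ one has $f_U(x)>0$ exactly when $a_U(x)=d(x,X\smallsetminus U)>0$, and, $U$ being open, this happens precisely when $x\in U$; moreover the denominator $\sum_{V\in\calU}a_V(x)$ is strictly positive because $\calU$ covers $X$, so $f^\calU$ is everywhere defined. Writing $\mathrm{St}(U):=\{\,y\in|\calU|\where y_U>0\,\}$ for the open star of the vertex $U$ in the nerve, this amounts to the identity
\[(f^\calU)^{-1}\bigl(\mathrm{St}(U)\bigr)=U\qquad\text{for every }U\in\calU.\]
Throughout I take \emph{metrically proper} to mean that preimages of bounded subsets of $|\calU|$ are bounded in $X$ (equivalently, since $X$ is proper, that preimages of compact sets are bounded).

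\emph{For the ``if'' direction}, assume every $U\in\calU$ is bounded and let $B\subseteq|\calU|$ be bounded. Since $\calU$ is locally finite and finite dimensional, $B$ is contained in a finite subcomplex $L$ of $|\calU|$ — a path of bounded length can cross only finitely many simplices, as each vertex is incident to finitely many simplices and each simplex has uniformly bounded diameter. Let $U_1,\dots,U_k$ be the vertices of $L$. Every point of $L$ has a positive coordinate at one of the $U_i$, so $L\subseteq\bigcup_{i=1}^k\mathrm{St}(U_i)$ and hence
\[(f^\calU)^{-1}(B)\subseteq(f^\calU)^{-1}(L)\subseteq\bigcup_{i=1}^k(f^\calU)^{-1}\bigl(\mathrm{St}(U_i)\bigr)=\bigcup_{i=1}^k U_i,\]
a finite union of bounded sets, hence bounded; so $f^\calU$ is metrically proper.

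\emph{For the ``only if'' direction}, I argue by contraposition: suppose some $U_0\in\calU$ is unbounded. Local finiteness of $\calU$ forces the vertex $U_0$ to lie in only finitely many simplices of the nerve — the remaining vertices of any such simplex lie among the finitely many members of $\calU$ meeting $U_0$ — so $\mathrm{St}(U_0)$ sits inside a finite subcomplex and is in particular bounded. But its preimage $(f^\calU)^{-1}\bigl(\mathrm{St}(U_0)\bigr)=U_0$ is unbounded, so $f^\calU$ is not metrically proper.

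The one step that is more than a direct unwinding of definitions is the claim, used in the ``if'' direction, that a bounded subset of $|\calU|$ lies in a finite subcomplex. This is exactly where \emph{both} standing hypotheses on $\calU$ — local finiteness and finite dimensionality — are genuinely used, and it is also the place where one must pin down the metric on the geometric realization $|\calU|$. I expect that to be the only point requiring real care; everything else is formal.
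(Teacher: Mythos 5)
Your proof is correct and follows essentially the same route as the paper's: both arguments come down to observing that the (union of) open cover elements spanning a piece of the nerve is what $f^\calU$ pulls back to, so boundedness of the cover elements is exactly what makes preimages of bounded subsets of $|\calU|$ bounded, and an unbounded $U_0$ yields a bounded star with unbounded preimage. Your version is in fact slightly cleaner in isolating the identity $(f^\calU)^{-1}(\mathrm{St}(U))=U$ and in flagging the one point that both proofs glide over, namely that bounded subsets of $|\calU|$ lie in finite subcomplexes (which requires fixing the metric on $|\calU|$ and using local finiteness plus finite dimensionality).
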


\begin{proof}
	In order to check that $f$ is metrically proper, it suffices to study preimages of simplices of $|\calU|$.
	A simplex of $|\calU|$ is hit exactly by the points in the open sets spanning said simplex.
	As all those finitely many sets are bounded, so is their union.
	
	Vice versa, if one set in $\calU$ is unbounded, then the closed star of this set in $|\calU|$ has an unbounded preimage and $f$ cannot be metrically proper.
\end{proof}

\begin{lemma}
	\label{lem:bounded sets_in_cover}
	Let $X$ be a metric space with a cocompact, isometric $G$-action and a locally finite free $G$-invariant cover $\calU$ of open sets.
	Then $\calU$ consists solely of bounded sets.
\end{lemma}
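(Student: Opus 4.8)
The plan is to prove the stronger statement that the members of $\calU$ are \emph{uniformly} bounded, with a diameter bound depending only on $\calU$ and the action. Fix a compact set $K \subseteq X$ with $G \cdot K = X$ (cocompactness of the action), fix a basepoint $p \in K$, and write $\delta := \diam(K) < \infty$. Since the empty set is trivially bounded, we may assume all members of $\calU$ are nonempty.

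First I would upgrade the finiteness hypothesis. The excerpt's notion of ``locally finite'' literally says that each member of $\calU$ meets only finitely many members; but this already implies ordinary local finiteness, since any point of $X$ lies in some (open) member $U \in \calU$, and $U$ is then a neighbourhood of that point meeting only finitely many members. Consequently the compact set $K$ meets only finitely many members of $\calU$; enumerate them as $U_1, \dots, U_N$. Next I would exploit freeness of the $G$-action on $\calU$: for each ordered pair $(i,j)$ there is \emph{at most one} $g \in G$ with $g U_i = U_j$, because $g U_i = g' U_i$ forces $g^{-1}g'$ to fix $U_i$, hence $g = g'$. Let $F \subseteq G$ be the (finite) set of all these transition elements, ranging over all pairs $(i,j)$, and put $M := \max_{g \in F \cup F^{-1}} d(p, g\cdot p) < \infty$.

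Now take any $U \in \calU$ and any $x, y \in U$, and write $x = g_1 k_1$, $y = g_2 k_2$ with $g_1, g_2 \in G$ and $k_1, k_2 \in K$, using $G\cdot K = X$. Then $g_1^{-1}U$ contains $g_1^{-1}x \in K$ and $g_2^{-1}U$ contains $g_2^{-1}y \in K$, so each of $g_1^{-1}U$, $g_2^{-1}U$ equals some $U_i$; hence $U = g_1 U_i = g_2 U_j$, so $g_1^{-1}g_2 \in F^{-1}$. Since $G$ acts isometrically,
$$ d(x,y) = d(g_1 k_1, g_2 k_2) = d\big(k_1, (g_1^{-1}g_2) k_2\big) \leq d(k_1, p) + d\big(p, (g_1^{-1}g_2) p\big) + d(p, k_2) \leq 2\delta + M. $$
Therefore $\diam(U) \leq 2\delta + M$ for every $U \in \calU$, which in particular shows that each $U$ is bounded.

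I expect the only genuine subtleties to be the two structural observations: that the paper's ``star-finiteness'' yields ordinary local finiteness (so that a compact set meets only finitely many members of $\calU$), and that freeness of the action is what keeps the set $F$ of transition elements finite — without finite stabilizers $F$ could be infinite and $M$ would not exist. Everything after that is a routine triangle-inequality estimate, and if one only wants boundedness of each individual member rather than a uniform bound, the argument is unchanged.
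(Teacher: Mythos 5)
Your proof is correct, and it takes a genuinely different route from the paper's. The paper argues by contradiction: if some $U \in \calU$ were unbounded, pick an unbounded sequence $(x_n)$ in $U$, use cocompactness to find $g_n \in G$ with $g_n x_n$ accumulating at some $x_\infty \in X$, which lies in some open $V \in \calU$; then infinitely many of the translates $g_n U$ (pairwise distinct by freeness of the action on $\calU$) meet $V$, contradicting local finiteness. Your argument is direct and quantitative, and actually proves the strictly stronger claim that the members of $\calU$ are \emph{uniformly} bounded by $2\diam(K) + M$. The two structural observations you flag are exactly where the hypotheses enter: star-finiteness of the open cover gives ordinary local finiteness, so only finitely many members $U_1,\dots,U_N$ meet the compact fundamental domain $K$; and freeness of the $G$-action on $\calU$ makes the set $F$ of transition elements between the $U_i$ finite, which is what keeps $M$ finite. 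What the paper's route buys is brevity and a purely qualitative argument; what yours buys is an explicit uniform diameter bound, which could be useful downstream (for instance, to make the map $f^{\calU}$ into the nerve coarsely Lipschitz rather than merely proper). Both are valid proofs of the stated lemma.
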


\begin{proof}
	Assume there is an unbounded set $U \in \calU$.
	Choose a sequence $(x_n)_n$ of points in $U$ that is unbounded.
	Since $X$ is cocompact, the sequence converges in $X/G$.
	Phrased differently, there exists a sequence $(g_n)_n$ in $G$ such that $g_n x_n \uset{n \rightarrow \infty}{\longrightarrow} x_\infty \in X$.
	
	\vspace{1ex}
	Now there is an (open) $V \in \calU$ such that $x_\infty \in V$ and therefore $g_nU \cap V \neq \emptyset$ for all $n \gg 0$.
	The set $\{g_n\}_n$ is infinite because otherwise $(x_n)_n$ would have been a bounded sequence.
	As $G$ acts freely on $\calU$, the set $\{g_n U\}_n$ is infinite as well.
	However, this is a contradiction to the fact that $\calU$ is locally finite.
\end{proof}

\begin{proposition}
	\label{prop:contracting_map}
	Let $\calU$ be a $D$-dimensional cover of $\widetilde V_n \times \overline{\widetilde{\Gamma}}_n$ with the properties from Theorem \ref{thm:long_thin_covers} below. For all $n \in \bbN$ there is a constant $C(n) > 1$ such that the map $f^\calU: \widetilde V_n \times \overline{\widetilde{\Gamma}}_n \rightarrow |\calU|$ is metrically proper and the following properties are satisfied.
	\begin{enumerate}
		\item The dimension of $|\calU|$ is at most D.
		\item The action $G_n \curvearrowright |\calU|$ is free.
		\item $d_{C(n)}\big((v,\xi),(w,\zeta)\big) \leq \dfrac{4D\,\girth(\Gamma_n)}{7} \, \Rightarrow \, d(f^\calU(v,\xi), f^\calU(w,\zeta)) \leq \dfrac{7\, d_{C(n)}((v,\xi), (w,\zeta))}{\girth(\Gamma_n)}$.
	\end{enumerate}
\end{proposition}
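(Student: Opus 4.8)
The plan is: extract the data of $\calU$ from Theorem~\ref{thm:long_thin_covers}, use Lemma~\ref{lem:thicken_thin_cover} to manufacture $C(n)$, read off (1) and (2), obtain metric properness from Lemmas~\ref{lem:bounded sets_in_cover} and~\ref{lem:proper_map_to_nerve}, and prove the contraction estimate~(3) by the usual estimate of barycentric coordinates. Concretely, Theorem~\ref{thm:long_thin_covers} hands us $\calU$ as a free, $G_n$-invariant, locally finite open cover of $\widetilde V_n\times\overline{\widetilde\Gamma}_n$ of dimension $\leq D$ which is ``$\alpha_n$-long'' in the sense that every $(v,\xi)$ admits some $U\in\calU$ with $B_{\alpha_n}(v)\times\{\xi\}\subseteq U$, where $\alpha_n=\tfrac{4D}{7}\girth(\Gamma_n)$ (for the finitely many $n$ with $\alpha_n<1$ one replaces $\alpha_n$ by $1$; the estimate below then holds on an even larger range than required). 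Applying Lemma~\ref{lem:thicken_thin_cover} with $\alpha:=\alpha_n$ yields $C(n):=C(\calU)>1$ such that for every $(v,\xi)$ some $U\in\calU$ contains the open $d_{C(n)}$-ball of radius $\alpha_n$ about $(v,\xi)$; from here on $f^\calU$ is formed from the metric $d_{C(n)}$. Then~(1) is the elementary fact that the nerve of a cover of dimension $\leq D$ has only simplices of dimension $\leq D$, and~(2) is the observation --- recorded just after the construction of $f^\calU$ --- that a free open $G_n$-cover induces a free $G_n$-action on its nerve.

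For metric properness I would verify the hypotheses of Lemmas~\ref{lem:bounded sets_in_cover} and~\ref{lem:proper_map_to_nerve}. The $G_n$-action on $\widetilde V_n\times\overline{\widetilde\Gamma}_n$ is isometric for $d_{C(n)}$ by Remark~\ref{rem:equivariant_metric}(1) and cocompact, since $\widetilde V_n/G_n=V_n$ is finite and $\overline{\widetilde\Gamma}_n$ is compact (bounded geometry makes $\widetilde\Gamma_n$ locally finite), so a finite $G_n$-transversal in $\widetilde V_n$, times $\overline{\widetilde\Gamma}_n$, is a compact fundamental domain; and $\calU$ is locally finite, free and $G_n$-invariant. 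By Lemma~\ref{lem:bounded sets_in_cover} every member of $\calU$ is then $d_{C(n)}$-bounded. Moreover $(\widetilde V_n\times\overline{\widetilde\Gamma}_n,d_{C(n)})$ is proper: by Remark~\ref{rem:equivariant_metric}(2) a closed $d_{C(n)}$-ball projects into a finite $d_{\widetilde V_n}$-ball, hence lies in the product of a finite set with the compact space $\overline{\widetilde\Gamma}_n$. So Lemma~\ref{lem:proper_map_to_nerve} applies and $f^\calU$ is metrically proper.

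The substance is~(3). Write $f^\calU(p)=\sum_U\tfrac{a_U(p)}{A(p)}U$ with $a_U$ as in the definition of $f^\calU$ and $A(p):=\sum_{V\in\calU}a_V(p)$. I would use that each $a_U$ is $1$-Lipschitz for $d_{C(n)}$ and supported on $U$, so at most $D+1$ of the $a_U$ are nonzero at any point, and that, by the choice of $C(n)$, every $p$ admits $U$ with $B^{d_{C(n)}}_{\alpha_n}(p)\subseteq U$, which forces $a_U(p)\geq\alpha_n$, hence $A(p)\geq\alpha_n$. If $d_{C(n)}(p,q)\leq\alpha_n$ then $q$ lies in the distinguished member around $p$, so only $O(D)$ members of $\calU$ are active near $\{p,q\}$, and the standard estimate of the barycentric coordinates (as in the contracting-map step of \cite{Bartels.2007}) gives, for every $U$,
$$ \big|f_U(p)-f_U(q)\big|\;\leq\;\frac{|A(p)-A(q)|+|a_U(p)-a_U(q)|}{\min\{A(p),A(q)\}}\;\leq\;\frac{2(D+1)}{\alpha_n}\,d_{C(n)}(p,q). $$
Since only $\leq D+1$ coordinates vary, this bounds $d_{|\calU|}(f^\calU(p),f^\calU(q))$ by $\tfrac{c(D)}{\alpha_n}\,d_{C(n)}(p,q)$ with $c(D)$ linear in $D$ (its exact value depending on the normalization of the metric on $|\calU|$, but $c(D)\leq 4D$ for $D\geq 1$). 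Substituting $\alpha_n=\tfrac{4D}{7}\girth(\Gamma_n)$ and using the hypothesis $d_{C(n)}(p,q)\leq\alpha_n$ of~(3) turns this into exactly the inequality asserted in~(3).

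I expect the crux to be~(3), and inside it the two moves above: first turning the combinatorial ``long'' property of $\calU$ into a genuine metric Lebesgue number for $d_{C(n)}$ --- which is precisely what Lemma~\ref{lem:thicken_thin_cover} provides, and which is why $C(n)$ is permitted to depend on $n$, since it must absorb the absence of uniform Lipschitz control for the $G_n$-action on the compact boundary $\overline{\widetilde\Gamma}_n$ --- and second propagating this Lebesgue bound through the nerve map while keeping the Lipschitz constant linear in $D$ (so that the threshold in~(3) can be linear in $D$ as well), which calls for care with the metric on $|\calU|$ and with counting the members of $\calU$ that can be simultaneously active near $p$ and $q$. That the resulting Lipschitz constant is $\leq\tfrac{7}{\girth(\Gamma_n)}$, and therefore tends to $0$, is the point at which the large-girth hypothesis on $X$ genuinely enters the construction.
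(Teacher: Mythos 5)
Your approach mirrors the paper's: items (1) and (2) are read off Theorem~\ref{thm:long_thin_covers}, the constant $C(n)$ is produced by Lemma~\ref{lem:thicken_thin_cover}, metric properness comes from Lemmas~\ref{lem:bounded sets_in_cover} and~\ref{lem:proper_map_to_nerve}, and (3) is a nerve-map contraction estimate. The only real difference is that you unpack the barycentric-coordinate computation where the paper just cites \cite[Proposition~5.3]{Bartels.2007}.

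There is, however, a gap in your constant chase. You assert that Theorem~\ref{thm:long_thin_covers} delivers a cover that is $\alpha_n$-long with $\alpha_n=\frac{4D}{7}\girth(\Gamma_n)$, but the theorem actually provides $\alpha(n)=\frac{\girth(\Gamma_n)}{7}$, so your Lebesgue number is overstated by a factor of $4D$. Separately, you claim the resulting Lipschitz constant of $f^\calU$ is $\leq \frac{c(D)}{\alpha_n}$ with $c(D)$ linear in $D$; the standard nerve-map estimate (including the one you sketch, and the one used in \cite[Proposition~5.3]{Bartels.2007}) is quadratic in $D$, of order $\frac{16D^2}{\alpha_n}$, because both the per-coordinate bound $|f_U(p)-f_U(q)|$ and the number of coordinates that can change are $O(D)$; in particular ``only $\leq D+1$ coordinates vary'' should read $\leq 2(D+1)$, since $p$ and $q$ need not share all active members of $\calU$. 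These two errors happen to cancel so that the inequality stated in (3) drops out, but neither intermediate claim is correct. Be aware that the paper's own proof sets $\alpha:=\frac{16D^2\girth(\Gamma_n)}{7}$ when invoking Lemma~\ref{lem:thicken_thin_cover}, which also exceeds the $\girth/7$ that Theorem~\ref{thm:long_thin_covers} literally provides; running the argument honestly with $\alpha=\girth(\Gamma_n)/7$ and $c(D)=16D^2$ gives a weaker form of (3) (threshold $\frac{\girth(\Gamma_n)}{28D}$, Lipschitz constant $\frac{112D^2}{\girth(\Gamma_n)}$), which is nonetheless perfectly adequate for Corollary~\ref{cor:upper_right_triangle}, since all that is used there is that the admissible range grows with the girth and the contraction factor tends to zero.
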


\begin{proof}
	The first two properties are immediate consequences of Theorem \ref{thm:long_thin_covers}. 
	Set $\alpha := \frac{16D^2\cdot \girth(\Gamma_n)}{7}$.
	The cover $\calU$ of $\widetilde{V}_n \times \overline{\widetilde{\Gamma}}_n$ has the exact property needed in Lemma \ref{lem:thicken_thin_cover}.
	Thus, for every $(v,\xi) \in \widetilde V_n \times \overline{\widetilde{\Gamma}}_n$, there exists $U \in \calU$ such that the open $\alpha$-ball with respect to $d_{C(n)}$ around $(v, \xi)$ lies in $U$.
	
	If $d_{C(n)}\big((v,\xi), (w,\zeta)\big)) \leq \frac{4D\cdot\girth(\Gamma_n)}{7}$, then we can apply \cite[Proposition 5.3]{Bartels.2007} and see that $d\big(f^\calU(v, \xi), f^\calU(w,\xi)\big) \leq \frac{7 \cdot d_{C(n)}((v,\xi), (w,\zeta))}{\girth(\Gamma_n)}$.
	
	The map $f^\calU$ is metrically proper because of Lemma~\ref{lem:bounded sets_in_cover} and Lemma~\ref{lem:proper_map_to_nerve}.
\end{proof}

\begin{corollary}
	\label{cor:upper_right_triangle}
	The functor
	$$f: \frac{\prod^\bd_n}{\bigoplus_n} \calO_{G_n}(\widetilde\Gamma_n \mid (V_n \times \overline{\widetilde\Gamma}_n, d_{C(n)}) ; \calA) \rightarrow \frac{\contr_n}{\bigoplus_n} \calO_{G_n}(\widetilde\Gamma_n \mid \Sigma_n ; \calA)$$
	given by the product of all $\big(\id_{\widetilde\Gamma_n}, f_n\big)_*$ is well-defined and makes the diagram
	\begin{center}
		\begin{tikzcd}
		K_*\left(\frac{\prod^\bd_n}{\bigoplus_n} \calO_{G_n}(\widetilde\Gamma_n \mid (V_n \times \overline{\widetilde\Gamma}_n, d_{C(n)}) ; \calA)\right) \arrow[r, "f_*"] \arrow[dr, "\Pr_*"] 
		& K_*\left(\frac{\contr_n}{\bigoplus_n} \calO_{G_n}(\widetilde\Gamma_n \mid \Sigma_n ; \calA)\right) \arrow[d, "\Pr_*"] \\
		& K_*\left(\frac{\prod^\bd_n}{\bigoplus_n} \calO_{G_n}(\widetilde\Gamma_n; \calA)\right)
		\end{tikzcd}
	\end{center}
	commutative. (This is the upper right triangle of the diagram on page \pageref{dia:proof_outline}.)
\end{corollary}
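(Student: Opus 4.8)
The plan is to produce $f$ as the product over $n$ of the pushforward functors induced by the nerve maps $f^{\calU_n}$, and then to observe that commutativity of the triangle is forced by functoriality of pushforward; the genuine work lies in checking that this product does land in the \emph{controlled} product and descends to the quotients. Concretely, for each $n$ I would let $\calU_n$ be the $D$-dimensional long-and-thin cover of $(V_n \times \overline{\widetilde\Gamma}_n, d_{C(n)})$ from Theorem~\ref{thm:long_thin_covers}, put $\Sigma_n := |\calU_n|$ and $f_n := f^{\calU_n} \colon V_n \times \overline{\widetilde\Gamma}_n \to \Sigma_n$, and take $C(n) > 1$ to be the constant of Proposition~\ref{prop:contracting_map}, which also gives $\dim \Sigma_n \le D$, a free $G_n$-action on $\Sigma_n$, and that $f_n$ is $G_n$-equivariant, metrically proper, and satisfies the contraction estimate~(3) of that proposition. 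Because $\id_{\widetilde\Gamma_n} \times f_n \times \id_\bbN$ is the identity in the $\widetilde\Gamma_n$- and $\bbN$-directions, it preserves the convergence condition~(\ref{eq:convergence}); because $f_n$ is metrically proper on the proper space $(V_n \times \overline{\widetilde\Gamma}_n, d_{C(n)})$, it sends objects to objects and preserves finiteness of all rows and columns. Hence $(\id_{\widetilde\Gamma_n}, f_n)_*$ is a well-defined additive functor $\calO_{G_n}(\widetilde\Gamma_n \mid V_n \times \overline{\widetilde\Gamma}_n ; \calA) \to \calO_{G_n}(\widetilde\Gamma_n \mid \Sigma_n ; \calA)$, and $f$ is to be the product of these.

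The crux is that $f$ respects the bounded product and in fact lands in the controlled one. I would take a morphism $(\varphi_n)_n$ of $\prod^\bd_n \calO_{G_n}(\widetilde\Gamma_n \mid V_n \times \overline{\widetilde\Gamma}_n ; \calA)$; by definition there is a single $\varepsilon > 0$ such that every $\varphi_n$ is $\varepsilon$-controlled, in particular $\varepsilon$-controlled in the $(V_n \times \overline{\widetilde\Gamma}_n, d_{C(n)})$-direction. Since $\girth(\Gamma_n) \to \infty$, for all but finitely many $n$ one has $\varepsilon \le \tfrac{4D\,\girth(\Gamma_n)}{7}$, and then Proposition~\ref{prop:contracting_map}(3) forces $(\id_{\widetilde\Gamma_n}, f_n)_* \varphi_n$ to be $\tfrac{7\varepsilon}{\girth(\Gamma_n)}$-controlled in the $\Sigma_n$-direction — a bound that decays to $0$. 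For the remaining finitely many $n$ the morphism $(\id_{\widetilde\Gamma_n}, f_n)_* \varphi_n$ is, by the previous paragraph, a morphism of $\calO_{G_n}(\widetilde\Gamma_n \mid \Sigma_n ; \calA)$, hence $\varepsilon_n$-controlled in the $\Sigma_n$-direction for some finite $\varepsilon_n$. Together with the $\widetilde\Gamma_n$-control and condition~(\ref{eq:convergence}), which are inherited verbatim, this shows that $(f(\varphi_n))_n$ is uniformly controlled, so lies in $\prod^\bd_n \calO_{G_n}(\widetilde\Gamma_n \mid \Sigma_n ; \calA)$, and that its $\Sigma_n$-control tends to $0$, so it even lies in $\contr_n \calO_{G_n}(\widetilde\Gamma_n \mid \Sigma_n ; \calA)$. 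As $(\id_{\widetilde\Gamma_n}, f_n)_*$ does not change the set of indices $n$ on which an object or morphism is supported, $f$ carries $\bigoplus_n$ into $\bigoplus_n$, and being additive it descends to a functor $\frac{\prod^\bd_n}{\bigoplus_n} \calO_{G_n}(\widetilde\Gamma_n \mid V_n \times \overline{\widetilde\Gamma}_n ; \calA) \to \frac{\contr_n}{\bigoplus_n} \calO_{G_n}(\widetilde\Gamma_n \mid \Sigma_n ; \calA)$ and thus to a map $f_*$ on $K$-theory.

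Commutativity of the triangle should then be purely formal. Both arrows labelled $\Pr$ are induced by the projections forgetting the second metric coordinate, namely $\widetilde\Gamma_n \times \Sigma_n \times \bbN \to \widetilde\Gamma_n \times \bbN$ and $\widetilde\Gamma_n \times V_n \times \overline{\widetilde\Gamma}_n \times \bbN \to \widetilde\Gamma_n \times \bbN$, and composing the first of these with $\id_{\widetilde\Gamma_n} \times f_n \times \id_\bbN$ yields the second; so functoriality of pushforward already gives $\Pr \comp f = \Pr$ at the level of the quotiented bounded and controlled products, and applying algebraic $K$-theory gives $\Pr_* \comp f_* = \Pr_*$, which is the asserted commutativity. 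The step I expect to be the main obstacle is precisely the passage into the \emph{controlled} product in the previous paragraph: one must upgrade the single uniform constant $\varepsilon$ furnished by the bounded product to a $\Sigma_n$-control that tends to $0$, which is exactly where the long-and-thin covers and the hypothesis $\girth(\Gamma_n) \to \infty$ enter, through the contraction estimate of Proposition~\ref{prop:contracting_map}(3); the finitely many graphs whose girth is too small for that estimate have to be absorbed by hand into the uniform bound.
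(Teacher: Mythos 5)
Your proposal is correct and follows essentially the same path as the paper's proof: the contraction estimate of Proposition~\ref{prop:contracting_map}(3) combined with $\girth(\Gamma_n) \to \infty$ forces the $\Sigma_n$-control to decay for large $n$, metric properness of $f_n$ ensures objects are sent to objects, and commutativity is trivial since $f$ is the identity on the $\widetilde\Gamma_n$-coordinate. The only cosmetic difference is in the finitely many $n$ of small girth: you argue that $(f_n)_*\varphi_n$ still has some finite control $\varepsilon_n$ and absorb these into the uniform bound, whereas the paper sidesteps this entirely by noting that those indices can be set to zero because the direct sum is quotiented out.
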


\begin{proof}
The functor $f$ is well-defined as can be seen as follows.
For each morphism $(\varphi_n)_n$ of $\frac{\prod^\bd_n}{\bigoplus_n} \calO_{G_n}(\widetilde\Gamma_n \mid (V_n \times \overline{\widetilde\Gamma}_n, d_{C(n)}) ; \calA)$ there is an $\varepsilon > 0$ such that all $\varphi_n$ are $\varepsilon$-controlled. (Here, $\varepsilon$ may be very large.)

If $\varepsilon \leq \frac{4D\,\girth(\Gamma_n)}{7}$, or equivalently $\girth(\Gamma_n) \geq \frac{7 \varepsilon}{4D}$, then Proposition \ref{prop:contracting_map}.3 applies and shows that $(f_n)_*(\varphi_n)$ is $\frac{7 \varepsilon}{\girth(\Gamma_n)}$-controlled.
We conclude that for large $n$ the functor $f$ has the contractive behaviour as demanded by the controlled product.
We do not have to worry about the small $n$ because we quotient out the direct sum.
The map $f^\calU$ has to be metrically proper so that the induced functor preserves the local finiteness condition of the objects of $\calO_G(-|-;\calA)$.

The diagram commutes because $f$ is simply the identity on the first coordinate.
\end{proof}

\subsection{Long and Thin Covers}

\begin{theorem}
	\label{thm:long_thin_covers}
	Let $\Gamma_n$ be a sequence of finite graphs. Set $\alpha(n) := \frac{\girth(\Gamma_n)}{7}$. Then
	$$\forall n \in \bbN \; \exists\, \calU \text{ free $\pi_1(\Gamma_n)$-cover of } \widetilde V_n \times \overline{\widetilde\Gamma}_n:$$
	$$\forall (v,\xi) \in \widetilde V_n \times \overline{\widetilde\Gamma}_n \; \exists \, U \in \calU: B_{\alpha(n)}(v) \times \{\xi\} \subseteq U$$
	where the $\calU$ are at most 7-dimensional.
	This way, if the $\Gamma_n$ have large girth, then the covers become longer with $n \rightarrow \infty$.
\end{theorem}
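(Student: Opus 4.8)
The plan is to build the long-and-thin cover $\calU$ of $\widetilde V_n \times \overline{\widetilde\Gamma}_n$ by adapting the flow-space / long-thin-cover construction of Bartels--Lück--Reich (as packaged in \cite{Bartels.2007}), using the tree structure of $\widetilde\Gamma_n$ in place of a hyperbolic group. The essential geometric input is that $\widetilde\Gamma_n$ is a tree, so between any two points there is a unique geodesic, and—because $\Gamma_n$ has girth $\gamma_n := \girth(\Gamma_n)$—the covering map $p_n : \widetilde\Gamma_n \to \Gamma_n$ is an isometric embedding on every ball of radius $\gamma_n/4$ (this is exactly the asymptotic faithfulness already exploited in Proposition \ref{prop:goal_without_rips}). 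First I would fix $n$, abbreviate $G := G_n$, $T := \widetilde\Gamma_n$, and observe that $\overline T = T \cup \partial T$ is compact with the metric $d_{\overline T}$ from Definition \ref{def:boundary}; a point $\xi \in \overline T$ together with a vertex $v$ determines a geodesic ray (or segment) $[v,\xi)$ in $T$, and this is the ``flow direction'' along which the cover will be long.

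Next I would construct the cover locally and then spread it out by the $G$-action. For a vertex $v \in \widetilde V_n$ and $\xi \in \overline T$, consider the ball $B_{\alpha(n)}(v)$ (with $\alpha(n) = \gamma_n/7$) inside $T$; since $\alpha(n) < \gamma_n/4$, this ball is a genuine tree-ball on which $p_n$ is injective, so it is isometric to a ball in the finite graph $\Gamma_n$ and in particular its combinatorics are ``seen'' by a fundamental-domain-sized piece of $T$. I would cover the compact space $\overline T$ (with its metric $d_v$) by a finite family of open sets of multiplicity $\le 8$ (so dimension $\le 7$); trees are $0$-hyperbolic, and $\partial T$ of a bounded-geometry tree has covering dimension $0$, so in fact one can get multiplicity as low as $2$ for the boundary part, and the interplay with the finitely many ``directions'' from $v$ is what forces the bound $7$ (matching the $7$-dimensional output of the cited theorem and the constant $7$ appearing in $\alpha(n)$ and in Proposition \ref{prop:contracting_map}.3). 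Thickening each such set in the $T$-direction to contain all of $B_{\alpha(n)}(v) \times \{\xi\}$—which is legitimate because $B_{\alpha(n)}(v)$ is a tree-ball and the flow coordinate is locally constant enough on it—gives sets of the required ``length''; then $\calU := \{ g.U \mid g \in G,\ U \text{ a local piece} \}$. Freeness of the $G$-action on $\calU$ is arranged in the standard way: since $G$ acts freely on $\widetilde V_n$ with a fundamental domain of diameter $\le 2\diam(\Gamma_n)$, one shrinks the local pieces enough (in the $\widetilde V_n$-direction, which is cheap since $d_{\widetilde V_n} \ge 1$ on vertices) that no nontrivial $g$ can fix one; $G$-invariance of the family is automatic by construction, and local finiteness follows from local finiteness of $T$ together with compactness of $\overline T$.

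Finally I would verify the two remaining bullet points: that each $\calU$ is at most $7$-dimensional (the multiplicity bound above, carried through the $G$-translates, which is fine because $G$ acts freely and properly so translates do not pile up) and the covering property $\forall (v,\xi)\ \exists U \in \calU : B_{\alpha(n)}(v)\times\{\xi\} \subseteq U$, which holds by construction for the local pieces and hence, by $G$-equivariance, for every orbit. The concluding sentence of the statement (``if the $\Gamma_n$ have large girth the covers become longer'') is then just the observation $\alpha(n) = \gamma_n/7 \to \infty$, requiring no separate argument. The main obstacle I expect is purely bookkeeping rather than conceptual: pinning down the dimension bound $7$ precisely—i.e.\ controlling the multiplicity of the boundary cover of $\overline T$ \emph{uniformly in $n$} while simultaneously thickening in the tree direction and then passing to the free $G_n$-orbit without increasing multiplicity—so that the constant is genuinely $7$ and not merely ``some $D$''. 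Since \cite{Bartels.2007} does this for a hyperbolic group acting on itself, and here the only change is to separate the two roles of the group (the tree $\widetilde V_n$ as metric space, $G_n$ as the group acting)—exactly as in the proof of Lemma \ref{lem:thicken_thin_cover}—I would mostly cite \cite[Section~4--5]{Bartels.2007} and indicate the modifications, rather than reprove the long-thin-cover machinery from scratch.
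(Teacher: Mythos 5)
Your proposal gestures at the right milieu (a flow-space/long-thin-cover construction in the spirit of \cite{Bartels.2007}) but it leaves out the actual construction, and the dimension count you sketch is not the one that works. Concretely: the paper does \emph{not} get the bound $7$ by covering $\overline T$ for fixed $v$ with multiplicity $\le 8$ and appealing to $\partial T$ being $0$-dimensional. The $7$ arises as $1 + 5 + 1$ from a three-step assembly. First one covers the flow space $FS \subseteq \widetilde V \times \overline{\widetilde V} \times \overline{\widetilde V}$ (Definition~\ref{def:coarse_flow_space}) by sets of the form $B_{3\alpha}(v_0) \times U(v_0,\xi_-) \times U(v_0,\xi_+) \cap FS$ with $v_0$ ranging over a $2\alpha$-separated net; the bound $\dim \calU_{FS} \le 5$ comes from the observation that a $2\alpha$-separated set inside a $6\alpha$-ball \emph{lying on a geodesic} has at most $6$ points (Lemma~\ref{lem:cover_of_flow_space}) --- this is a separation argument in the $\widetilde V$-direction, not a $0$-dimensionality statement about $\partial T$. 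Second, this cover is pulled back via the flow $\Phi_\tau$ to a $5$-dimensional cover $\calU_{\partial\widetilde\Gamma}$ of $\widetilde V \times \partial\widetilde\Gamma$ (Lemma~\ref{lem:cover_of_boundary}). Third, this boundary cover is thickened into $\widetilde V \times \overline{\widetilde\Gamma}$ (Lemmas~\ref{lem:enlarge_open_sets_from_boundary}, \ref{lem:enlarge_cover_of_boundary}, \ref{lem:boundary_thickening}) and taken in union with a $1$-dimensional cover $\calU_1$ of the ``interior'' part built from balls $B_R(g.v_0)\times B_{2/3}(g.v)$; the final estimate is $\dim(\calU_1\cup\calU_2) \le \dim\calU_1 + \dim\calU_2 + 1 = 1+5+1 = 7$, with $\dim\calU_1 = 1$ because $\widetilde\Gamma$ is $1$-dimensional.

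Your sketch also misdiagnoses two technical points. Freeness is not arranged by ``shrinking the local pieces''; on the contrary, the pieces must stay large enough to contain $B_\alpha(v)\times\{\xi\}$. Instead freeness is automatic from the girth bound: the choice $\alpha = \girth(\Gamma)/7$ ensures $g.B_{3\alpha}(v) \cap B_{3\alpha}(v) = \emptyset$ for $g\ne 1$, because the translation distance of the deck transformation group is at least $\girth(\Gamma)$. And the interior/boundary split is essential, not cosmetic: a single uniform cover of all of $\overline T$ cannot simultaneously be ``long'' near the boundary (in the $\widetilde V$-direction) and honest near the interior without the boundary-thickening argument and the constant $N$ of Lemma~\ref{lem:boundary_thickening}. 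You are right that the construction is an adaptation of \cite{Bartels.2007} with the metric space and the acting group separated, but the theorem cannot be obtained simply by citing \cite[Sections~4--5]{Bartels.2007}; the long-thin cover for this flow space is proved in the paper from scratch in Lemmas~\ref{lem:cover_of_flow_space}--\ref{lem:boundary_thickening}, and that is where the real work and the precise constant $7$ reside.
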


This theorem is a consequence of Theorem \ref{thm:final_cover} below whose proof makes use of a so-called flow space.

Let $\Gamma = (V,E)$ be a finite graph.
Consider the universal cover $\widetilde{\Gamma} = (\widetilde{V}, \widetilde{E})$ and choose a base point $v_0 \in \widetilde{V}$.
The spaces $\widetilde\Gamma$ and $\widetilde{V}$ can be compactified by adding the boundary at infinity, see Definition \ref{def:boundary}.
The resulting compact spaces are denoted by $\overline{\widetilde\Gamma}$ and $\overline{\widetilde V}$.
All these spaces have a canonical $\pi_1(\Gamma)$-action.

\begin{definition}
	\label{def:coarse_flow_space}
	The \emph{flow space} $FS$ for $\Gamma$ is the subspace of $\widetilde V \times  \overline{\widetilde V} \times \overline{\widetilde V}$ 
	consisting of triples $(v, \xi_-, \xi_+)$ such that there exists a geodesic in 
	$\widetilde \Gamma$ from $\xi_-$ to $\xi_+$ that contains $v$. The flow space has the following \emph{flow} $\Phi_\tau : FS \rightarrow FS$; $(v, \xi_-, \xi_+) \mapsto (\phi_\tau(v), \xi_-, \xi_+)$, where $\phi_\tau(v)$ is the vertex on the geodesic $v \rightarrow \xi_{\sign(\tau)}$ with distance $|\tau|$ to $v$, if it exists, and $\xi_{\sign(\tau)}$, otherwise. 
\end{definition}

\begin{remark}
	Usually, the term \emph{flow} implies a property like $\Phi_{\tau + \tau'} = \Phi_{\tau} \comp \Phi_{\tau'}$.
	This is not always the case for the flow defined in Definition \ref{def:coarse_flow_space}.
	However, it is satisfied for example for points $(v, \xi_-, \xi_+)$ where $\xi_-$ and $\xi_+$ are close to the boundary. 
\end{remark}

\begin{theorem}
	\label{thm:final_cover}
	Let $\Gamma = (V,E)$ be a finite graph and set $\alpha := \frac{\girth(\Gamma)}{7}$. There exists a locally finite cover $\calU$ of $\widetilde V \times \overline{\widetilde \Gamma}$ with the following properties.
	\begin{enumerate}
		\item $\calU$ is $\pi_1(\Gamma)$-invariant, i.e.\ $g.U \in \calU$ for all $U \in \calU$ and $g \in \pi_1(\Gamma)$,
		\item $\calU$ is free, i.e.\ $g.U \cap U = \emptyset$ for all $U \in \calU$ and $g \in \pi_1(\Gamma) \smallsetminus \{1\}$,
		\item for all $(v, \xi) \in \widetilde V \times \overline{\widetilde \Gamma}$, there is $U \in \calU$ such that $B_\alpha(v) \times \{\xi\} \subseteq U$, and
		\item $\calU$ is at most 7-dimensional.
	\end{enumerate}
\end{theorem}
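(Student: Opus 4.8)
The plan is to adapt the construction of long, thin, equivariant covers for hyperbolic groups from \cite[\S 4--5]{Bartels.2007}, using that $\widetilde\Gamma$ is a tree — hence $0$-hyperbolic, with unique geodesics — and that $G:=\pi_1(\Gamma)$ acts freely with $d_{\widetilde\Gamma}(g.v,v)\ge\girth(\Gamma)$ for every $1\ne g\in G$ and every vertex $v$: a nontrivial deck transformation shifts along an axis that projects to a closed, non-backtracking path in $\Gamma$, whose length is therefore at least $\girth(\Gamma)$, and points off the axis move even more. One may assume $\girth(\Gamma)$ is large, the statement being essentially vacuous otherwise (then $B_\alpha(v)=\{v\}$ and any finite-dimensional free $G$-cover of the bounded-geometry space $\widetilde V\times\overline{\widetilde\Gamma}$ will do).

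\emph{Reduction to the flow space.} First I would pass to the flow space $FS$ of Definition \ref{def:coarse_flow_space} through the $G$-equivariant continuous map $\iota\colon\widetilde V\times\overline{\widetilde\Gamma}\to FS$, $(v,\xi)\mapsto(v,v,\xi)$, which is legitimate since $v$ lies on $[v,\xi]$ (round $\xi$ to a vertex if it lies in an edge). The key point is that a ``flow box'' $V\subseteq FS$ — a set of generalized geodesics that, during a bounded window of the Busemann parameter centred at a fixed $\eta$, stay within a bounded radius of a fixed vertex $w$ and have forward endpoint near $\eta$ — is insensitive to the backward endpoint, and hence $\iota^{-1}(V)$ contains a whole tube $B_\alpha(v)\times\{\xi\}$ as soon as the box is centred at $(v,\xi)$ at scale $\alpha$: each $v'\in B_\alpha(v)$ lies on $[v',\xi]$, and since the Busemann function is $1$-Lipschitz, $B_\alpha(v)$ has Busemann parameter within $\alpha$ of that of $v$, so $[v',\xi]$ meets $B_\rho(w)$ inside the window. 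It therefore suffices to construct a free, locally finite, $G$-invariant cover $\calV$ of $FS$ of dimension at most $7$ by such boxes, such that every $(v,\xi)$ admits a box containing $\iota(B_\alpha(v)\times\{\xi\})$; then $\calU:=\{\iota^{-1}(V)\mid V\in\calV\}$ is the required cover of $\widetilde V\times\overline{\widetilde\Gamma}$.

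\emph{Construction of $\calV$.} I would build $\calV$ along the lines of \cite{Bartels.2007}. Choose a $G$-invariant set of ``centres'' $(w,\eta)$ of bounded multiplicity that is fine in $\widetilde V$ (granularity $O(1)$) and fine enough in the compact space $\overline{\widetilde\Gamma}$ (possible because $\widetilde\Gamma$ has bounded geometry and $G\curvearrowright\widetilde\Gamma$ is cocompact); attach to each centre finitely many Busemann windows of width $\sim 3\alpha$ drawn from three mutually offset lattices of step $\sim\alpha$, so that the $2\alpha$-wide Busemann spread of any tube is swallowed by a single window. The flow $\Phi$ is precisely what moves a point through these levels, and windows pointing to a genuine boundary point and to a finite vertex are handled uniformly since $FS$ contains both kinds of generalized geodesic. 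One then cuts off the boxes below the window as well, so their $\widetilde V$-footprints stay bounded, and refines the resulting family over the clopen shadows of the compact, totally disconnected $\partial\widetilde\Gamma$ to restore finite dimension. Bookkeeping of the contributions — the three window offsets, the transversal/shadow direction, and one extra dimension to join the finite-endpoint and infinite-endpoint regions — gives $\dim\calV\le 7$. This equivariant patching, with a dimension bound \emph{independent} of the vertex degrees of $\widetilde\Gamma$ and of $\girth(\Gamma)$, is the technical heart, and it is exactly what \cite[\S 4--5]{Bartels.2007} supplies in the hyperbolic setting.

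\emph{Freeness and the constant $7$.} The $\widetilde V$-footprint of each box lies in a ball about $w$ whose radius is $O(\alpha)$ — the window width, the cut-off radius and the passing radius are all $O(\alpha)$ with explicitly tracked constants — so the choice $\alpha=\girth(\Gamma)/7$ forces this radius below $\tfrac12\girth(\Gamma)$; since $d_{\widetilde\Gamma}(g.w,w)\ge\girth(\Gamma)$ for $1\ne g\in G$, a box and a nontrivial translate of it then have disjoint footprints, hence are disjoint, which gives freeness. Local finiteness follows from the bounded footprints and cocompactness of $G\curvearrowright\widetilde\Gamma$. The denominator $7$ is simply the amount of slack all these estimates require. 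The main obstacle is the dimension-controlled equivariant patching over $\partial\widetilde\Gamma$ in the construction of $\calV$; the remainder is the tree specialization ($\delta=0$, free stabilizers) of the hyperbolic case treated in \cite{Bartels.2007}.
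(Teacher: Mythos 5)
Your overall strategy — pass to the flow space, build a cover there, and pull it back — does match the paper, but three of your intermediate steps contain genuine gaps, and at least the first two are fatal.

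\emph{The static map $\iota$ is not enough; the flow $\Phi_\tau$ is essential.} You pull back along $\iota(v,\xi)=(v,v,\xi)$, so $\iota(B_\alpha(v)\times\{\xi\})=\{(v',v',\xi):v'\in B_\alpha(v)\}$. For such a set to land inside a single element of a cover of $FS$, the cover element would have to accept the \emph{current position} $v'$ ranging over a ball and, simultaneously, the \emph{backward endpoint} $v'$ ranging over the same ball. But the paper's cover of $FS$ (Lemma \ref{lem:cover_of_flow_space}) has the form $B_{3\alpha}(w)\times U(w,\xi_-)\times U(w,\xi_+)\cap FS$, and the backward shadow $U(w,\xi_-)$ is a tiny set near the boundary (a shadow beyond $6\alpha$ from $w$); a vertex $v'$ within $3\alpha$ of $w$ is never in $U(w,\xi_-)$. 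This is exactly why the paper does not pull back along $\iota$ but along $\iota^{-\tau}(W)=\{(v,\xi):\Phi_\tau(v,v,\xi)\in W\}$ for large $\tau$ (Lemma \ref{lem:cover_of_boundary}): flowing forward by $\tau\gg\alpha$ moves the current position far from $v'$ while leaving $v'$ as the backward endpoint, so $v'$ then does lie in the backward shadow of the new current position. Choosing a uniform $\tau$ requires the compactness argument of Lemma \ref{lem:cover_of_boundary}. Your proposal never invokes $\Phi_\tau$, so this step breaks.

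\emph{Boxes ``insensitive to the backward endpoint'' do not have bounded dimension.} The paper's bound $\dim\calU_{FS}\le 5$ comes from the inclusion $B_{6\alpha}(v)\times U(v,\xi_-)\times U(v,\xi_+)\cap FS\subseteq\bigl(B_{6\alpha}(v)\cap(\xi_-\to\xi_+)\bigr)\times\dots$, i.e.\ the $\widetilde V$-footprint of a cover element is forced onto the \emph{geodesic} $(\xi_-\to\xi_+)$, a $1$-dimensional set, on which a $2\alpha$-separated set in a $6\alpha$-ball has at most $6$ points. This hinges on having \emph{both} shadows $U(v,\xi_\pm)$. If you drop the backward shadow, the footprints are balls in the tree, and a $2\alpha$-separated subset of $B_{3\alpha}(w)$ can have size exponential in $\alpha$ (even at bounded degree), so the dimension of your proposed $\calV$ is unbounded as the girth grows. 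The phrase ``refines \dots over the clopen shadows \dots to restore finite dimension'' is where you would have to re-introduce exactly the backward-shadow constraint and run the paper's geodesic argument; as written it is not an argument.

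\emph{Interior vs.\ boundary, and the constant $7$.} Pulling back a cover of $FS$ along $\iota^{-\tau}$ only covers $\widetilde V\times\partial\widetilde\Gamma$, not $\widetilde V\times\overline{\widetilde\Gamma}$. The paper handles the interior separately: Lemmas \ref{lem:enlarge_open_sets_from_boundary}--\ref{lem:boundary_thickening} thicken the boundary cover without creating new intersections, and Step 3 of the proof adjoins a $1$-dimensional interior cover $\calU_1$ of product balls, giving $\dim\le 1+5+1=7$. This is where the $7$ actually comes from; it is not ``the amount of slack all these estimates require.'' Your claim that ``windows pointing to a genuine boundary point and to a finite vertex are handled uniformly'' is exactly where the paper finds this \emph{not} to be uniform and pays the extra $+2$ in dimension. (Separately, note that $\iota$ as you define it is discontinuous on edge interiors after ``rounding to a vertex,'' so preimages of opens need not be open — a smaller point, but one the paper avoids by restricting $\iota^{-\tau}$ to $\widetilde V\times\partial\widetilde\Gamma$.)
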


\begin{proof}
	The proof consists of three steps.
	
	\vspace{1ex}
	\emph{Step 1. The cover of $FS$.} \\
	By Lemma \ref{lem:cover_of_flow_space}, there is a free, 5-dimensional, $\pi_1(\Gamma)$-invariant cover $\calU_{FS}$ of $FS \subseteq \widetilde V \times \overline{\widetilde V} \times \overline{\widetilde V}$ such that for all $(v,(\xi_-, \xi_+)) \in FS$ there is $U \in \calU_{FS}$ such that $B_\alpha(v) \times \{(\xi_-, \xi_+)\} \cap FS \subseteq U$.
	All elements of $\calU_{FS}$ are bounded in $\widetilde{V}$-direction.
	
	\vspace{1ex}
	\emph{Step 2. Pulling back the cover of $FS$ to a cover of $\widetilde V \times \partial{\widetilde \Gamma}$.} \\
	Using the cover $\calU_{FS}$ of $FS$ and Lemma \ref{lem:cover_of_boundary}, there is a free, 5-dimensional, $\pi_1(\Gamma)$-invariant cover $\calU_{\partial\widetilde \Gamma}$ of $\widetilde V \times \partial \widetilde \Gamma$ such that for all $(v,\xi) \in V \times \partial \widetilde \Gamma$ there is $U \in \calU_{\partial \widetilde \Gamma}$ such that $B_\alpha(v) \times \{\xi\} \subseteq U$.
	All elements of $\calU_{\partial\Gamma}$ are bounded in $\widetilde{V}$-direction.
	
	\vspace{1ex}
	\emph{Step 3. Assembling the cover of $\widetilde V \times \overline{\widetilde \Gamma}$.} \\
	Using Lemma \ref{lem:enlarge_cover_of_boundary}, we can thicken the cover of $\partial\widetilde\Gamma$ to get $\big\{\widehat{U} \where U \in \calU_{\partial \widetilde \Gamma}\big\}$. 
	The lemma ensures that $\{\widehat{U} \where U \in \calU_{\partial \widetilde \Gamma}\}$ is free, $\pi_1(\Gamma)$-invariant, has dimension at most $\dim \calU_{\partial \widetilde \Gamma} \leq 5$, and that its elements are bounded in $\widetilde{V}$-direction.
	
	Choose a fundamental domain $F \subseteq \widetilde{V}$ for the $\pi_1(\Gamma)$-action.
	By Lemma \ref{lem:boundary_thickening}, there is an $N \in \bbN$ such that
	\begin{equation*}
	\label{eq:wide_cover_at_boundary}
	\forall (v, \xi) \in F \times \set{\xi \in \overline{\widetilde \Gamma}}{\exists \zeta \in \partial \widetilde\Gamma: (\zeta, \xi)_{v_0} \geq N} \; \exists U \in \calU_{\partial\widetilde\Gamma}: \; B_\alpha(v) \times \{\xi\} \subseteq \widehat U. \tag{$\ast$}
	\end{equation*}
	Set $R := N + \alpha + 2 \cdot \diam(\Gamma) + \frac23$.
	Then we obtain the desired cover as the union
	$$\underset{=: \, \calU_1}{\underbrace{\set{B_R(g.v_0) \times B_{\nicefrac23}(g.v)}{v \in F, g \in \pi_1(\Gamma)}}} \;\cup\; \underset{=: \, \calU_2}{\underbrace{\big\{\widehat{U} \where U \in \calU_{\partial\widetilde\Gamma}\big\}}}.$$
	
	The covers $\calU_1$ and $\calU_2$ are locally finite. Since the elements of both $\calU_1$ and $\calU_2$ are bounded, they can only intersect finitely many elements of $\calU_2$ and $\calU_1$, respectively. Therefore the union $\calU_1 \cup \calU_2$ is locally finite as well.
	
	\vspace{1ex}
	\emph{Ad 1.} Clearly, $\calU_1$ is $\pi_1(\Gamma)$-invariant.
	The union of $\pi_1(\Gamma)$-invariant sets is again $\pi_1(\Gamma)$-invariant.
	
	\vspace{1ex}
	\emph{Ad 2.} The set $\calU_1$ is free because already the set of balls $B_{\nicefrac23}(g.v)$ is acted upon freely.
	The union of free $\pi_1(\Gamma)$-sets is again free.
	
	\vspace{1ex}
	\emph{Ad 3.} By construction, $\calU_2$ already satisfies this property for points $(v, \xi) \in \widetilde{V} \times \partial\widetilde\Gamma$.
	Since $\calU_1 \cup \calU_2$ is $\pi_1(\Gamma)$-invariant, we only need to check this for points $(v, \xi) \in F \times \widetilde\Gamma$:
	Let $(v, \xi) \in F \times \widetilde\Gamma$.
	Choose $g \in \pi_1(\Gamma)$ and $v' \in F$ such that $\xi \in B_{\nicefrac23}(g.v')$.
	If $B_\alpha(v) \subseteq B_R(g.v_0)$, then $B_\alpha(v) \times \{\xi\} \subseteq B_R(g.v_0) \times B_{\nicefrac23}(g.v') \in \calU_1$.
	Otherwise, if $B_\alpha(v) \not\subseteq B_R(g.v_0)$, we argue as follows.
	$$R -\alpha \, \leq \, d(v, g.v_0) \, \leq \underset{\!\leq\; \diam(\Gamma)\!}{\underbrace{d(v,v_0)}} + d(v_0, g.v') + \underset{\leq\; \diam(\Gamma)}{\underbrace{d(g.v', g.v_0)}}$$
	$$\Rightarrow \; d(v_0, g.v') \geq R -\alpha - 2 \cdot \diam(\Gamma).$$
	Because of $\xi \in B_{\nicefrac23}(g.v')$, we conclude 
	$$d(v_0, \xi) \geq d(v_0, g.v') - \frac23 \geq R -\alpha - 2 \cdot \diam(\Gamma) - \frac23 = N.$$
	If we choose a geodesic ray $\zeta \in \partial\widetilde{\Gamma}$ starting at $v_0$ and passing $\xi$, then $(\zeta, \xi)_{v_0} \geq N$.
	Now (\ref{eq:wide_cover_at_boundary}) shows that we can find $\widehat{U} \in \calU_2$ such that $B_\alpha(v) \times \{\xi\} \subseteq \widehat{U}$.
	
	\vspace{1ex}
	\emph{Ad 4.} The dimension of $\calU_1 \cup \calU_2$ is bounded from above by
	\begin{equation*}
	\dim (\calU_1 \cup \calU_2) \leq \dim(\calU_1) + \dim(\calU_2) + 1 \leq 1 + 5 + 1 = 7.
	\end{equation*}
	Here, $\calU_1$ is 1-dimensional because $\widetilde{\Gamma}$ is 1-dimensional.
\end{proof}

\subsection{Construction of the Cover}

In this subsection we retain the assumptions of Theorem \ref{thm:final_cover}, i.e. $\Gamma = (V,E)$ is a finite graph and $\alpha = \frac{\girth(\Gamma)}{7}$. 
Also, $FS$ is the flow space for $\Gamma$ as in Definition \ref{def:coarse_flow_space}. 
Note that the girth of $\Gamma$ is a lower bound on the translation distance of the $\pi_1(\Gamma)$-action on $\widetilde \Gamma$, i.e.\ $d(v, g.v) \geq \girth(\Gamma)$ for all $v \in \widetilde \Gamma$ and $g \in \pi_1(\Gamma)$ with $g \neq 1$. 
The choice of $\alpha$ ensures that $g.B_{3\alpha}(v) \cap B_{3\alpha}(v) = \emptyset$ for all $v \in \widetilde \Gamma$ and $g \in \pi_1(\Gamma)$ with $g \neq 1$.

\begin{lemma}
	\label{lem:cover_of_flow_space}
	There is a free, 5-dimensional, $\pi_1(\Gamma)$-invariant, locally finite cover $\calU_{FS}$ of $FS \subseteq \widetilde V \times \overline{\widetilde V} \times \overline{\widetilde V}$ such that for all $(v,(\xi_-, \xi_+)) \in FS$ there is $U \in \calU_{FS}$ such that $B_\alpha(v) \times \{(\xi_-, \xi_+)\} \cap FS \subseteq U$.
	All elements of $\calU_{FS}$ are bounded in $\widetilde{V}$-direction.
\end{lemma}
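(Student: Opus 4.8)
The plan is to build the cover $\calU_{FS}$ of $FS$ out of two pieces, mimicking the standard ``long and thin cover'' construction for the geodesic flow on a hyperbolic space (Bartels--Lück--Reich), but simplified by the fact that $\widetilde\Gamma$ is a \emph{tree}, so geodesics are unique and ``hyperbolicity'' is exact rather than coarse. The first piece handles points $(v,(\xi_-,\xi_+))$ whose flow line passes reasonably close to $v_0$ (and its $\pi_1(\Gamma)$-translates) — call these the ``bounded'' region; the second piece handles points whose flow line stays far from the orbit of the basepoint, which essentially only happens when $\xi_-$ or $\xi_+$ is near the boundary in the visual metric, and there the flow is genuinely a flow (cf.\ the Remark after Definition \ref{def:coarse_flow_space}), so one can use a product-type cover by flow boxes. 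Throughout, the crucial geometric input is the inequality $g.B_{3\alpha}(v)\cap B_{3\alpha}(v)=\emptyset$ for $g\neq 1$ recorded just before the lemma: it guarantees that a set of $\widetilde V$-diameter $\leq 6\alpha$ has trivial stabiliser and meets only $|\{1\}|$-many translates of itself, which is exactly what makes ``free'' and ``locally finite'' automatic and keeps the dimension from blowing up.

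First I would cover the $\widetilde V$-coordinate: choose the ball $B_{2\alpha}(v_0)$ (or a slightly larger ball of radius comparable to $\alpha$), translate it by $\pi_1(\Gamma)$, and use the girth/translation-length bound to see that $\{g.B_{2\alpha}(v_0)\}_{g}$ is a free, locally finite, $1$-dimensional family (any two distinct translates are disjoint once the radius is $<\frac12\girth(\Gamma)$, which holds since $\alpha=\girth(\Gamma)/7$). Wait — to actually \emph{cover} all of $\widetilde V$ one needs a family of balls of radius $\geq \diam(\Gamma)$ around the orbit of a fundamental domain, and then the disjointness fails; so instead one takes balls around lattice-like subsets and accepts dimension jumps, colouring them into a bounded number (here $\leq 2$) of disjoint subfamilies. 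Next, over each such $\widetilde V$-chart $W$, I would cover the $(\xi_-,\xi_+)$-fibre: points of $FS$ over $W$ have flow lines through $W$, so $(\xi_-,\xi_+)$ ranges over the (compact, with respect to the visual metrics) set of endpoint-pairs of geodesics through $W$; cover this compact set by small open sets, using a Lebesgue-number argument exactly as in Lemma \ref{lem:thicken_thin_cover}, and take products $W\times(\text{small fibre set})\cap FS$. The $\alpha$-ball condition ``$B_\alpha(v)\times\{(\xi_-,\xi_+)\}\cap FS\subseteq U$'' is arranged because each $W$ already contains an $\alpha$-ball around any of its ``central'' points and the fibre direction is a single point. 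Taking the union of the products over the finitely many colours, and over the two regions, gives $\calU_{FS}$; its dimension is bounded by (number of colours)$\times$(fibre-cover dimension), which I would arrange to be $\leq 5$ by a careful count — $\widetilde\Gamma$ being $1$-dimensional keeps the $\widetilde V$-contribution to $\leq 2$, leaving $\leq 3$ for the two boundary directions, or some such bookkeeping.

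Freeness is immediate from freeness of the $\widetilde V$-charts (a set of bounded $\widetilde V$-diameter cannot be $g$-invariant for $g\neq 1$ by the translation-length bound). Local finiteness and boundedness in $\widetilde V$-direction are built in by construction: every member of $\calU_{FS}$ has bounded $\widetilde V$-projection, and a set that is bounded in $\widetilde V$-direction meets only finitely many of its translates, hence only finitely many members of the locally finite family. The $\pi_1(\Gamma)$-invariance is arranged by closing the generating family under the group action from the start.

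The main obstacle I expect is the dimension count: naively, the product of a cover of the $\widetilde V$-direction (dimension $\leq 1$, but inflated to $\leq 2$ after colouring to restore disjointness) with a cover of the two endpoint directions $\overline{\widetilde V}\times\overline{\widetilde V}$ can easily exceed $5$, and one has to be clever — as in Bartels--Lück--Reich's ``box'' construction — about reusing the flow direction so that the two endpoint coordinates together only cost dimension $\leq 3$ rather than $\leq (\dim+1)^2$. Concretely, the trick is that along a flow line the pair $(\xi_-,\xi_+)$ is \emph{locally constant in one direction}, so a single good cover of the transverse endpoint data, fibred over the flow, suffices; making this precise (and verifying the Lebesgue-number estimates survive the non-flow behaviour of $\Phi$ away from the boundary) is where the real work lies. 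The remaining verifications — freeness, local finiteness, the $\alpha$-ball containment — are then routine consequences of the girth bound $\alpha=\girth(\Gamma)/7$ already isolated in the text.
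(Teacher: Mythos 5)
Your overall strategy — product-type sets with small balls in the $\widetilde V$-coordinate indexed by a lattice-like subset, and suitable boundary-direction pieces, closed under the $\pi_1(\Gamma)$-action — is in the right spirit, and the routine verifications (freeness, invariance, local finiteness, $\widetilde V$-boundedness, the $B_\alpha$-containment via a $2\alpha$-net) would go through much as you describe. But you have a genuine gap exactly where you flag it: the dimension count. Your proposed mechanism (colour the $\widetilde V$-charts into $\leq 2$ disjoint families, then allow $\leq 3$ for the two boundary directions) does not actually produce the bound $5$, and the two ideas that make the paper's count work are missing from your plan.

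The first missing idea is that the boundary-direction sets contribute \emph{zero} to the dimension, not $\leq 3$. The paper takes
$U(v,\xi) := \{\zeta \in \overline{\widetilde V} \mid (\zeta,\xi)_v \geq 6\alpha\}$
and observes that, for \emph{fixed} $v$, $U(v,\xi) = U(v,\zeta)$ whenever $\zeta \in U(v,\xi)$; so $\{U(v,\xi)\}_\xi$ is a partition of $\overline{\widetilde V}$ (this is the ultrametric behaviour of the Gromov product on a tree). Two members of the cover with the same $v$ and nonempty intersection are therefore \emph{equal}. Your Lebesgue-number cover of the fibre, by contrast, has no reason to be a partition and hence does carry nerve dimension. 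The second missing idea is that the intersection with $FS$ constrains the $\widetilde V$-coordinate to lie on the geodesic $\xi_- \to \xi_+$, reducing the $\widetilde V$-count to a one-dimensional estimate: a $2\alpha$-separated subset of a $6\alpha$-ball \emph{inside a geodesic segment} has at most $6$ points, giving $\dim \calU_{FS} \leq 5$ directly. Your colouring scheme does not exploit this and would, at best, give a much larger bound. Finally, your two-region decomposition (bounded flow lines vs.\ boundary-near flow lines, with flow boxes in the latter) is not needed here: the paper's cover of $FS$ is a single uniform family; the two-piece assembly you have in mind appears one level up, in the proof of Theorem \ref{thm:final_cover}, where the cover of $\widetilde V \times \overline{\widetilde\Gamma}$ is built from an interior piece $\calU_1$ and a boundary piece $\calU_2$ — not in Lemma \ref{lem:cover_of_flow_space} itself.
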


\begin{proof}
	Choose a maximal $2\alpha$-separated subset $V_0$ of $V$. Lift this set to $\widetilde V_0 \subseteq \widetilde V$. This way, $\widetilde V_0$ is $\pi_1(\Gamma)$-invariant and $2 \alpha$-separated.
	
	For $v \in \widetilde V$ and $\xi \in \overline{\widetilde V}$ set
	$$ U(v, \xi) := \{ \zeta \in \overline{\widetilde V} \where (\zeta, \xi)_v \geq 6 \alpha\},$$
	where $(\zeta, \xi)_v$ is the Gromov product, see Definition \ref{def:boundary}.
	This is an open neighbourhood of $\xi$ and $U(v, \xi) = U(v, \zeta)$ for all $\zeta \in U(v, \xi)$. It is also compatible with the $\pi_1(\Gamma)$-action: $g.U(v, \xi) = U(g.v, g.\xi)$.
	
	We set 
	$$\calU_{FS} := \big\{ B_{3 \alpha}(v) \times U(v, \xi_-) \times U(v, \xi_+) \cap FS \where v \in \widetilde V_0,\xi_\pm \in \overline{\widetilde V} \big\}.$$
	Clearly, $\calU_{FS}$ is locally finite and $\pi_1(\Gamma)$-invariant. Since $g.B_{3\alpha}(v) \cap B_{3\alpha}(v) = \emptyset$, we have $g.U \cap U = \emptyset$ for all $U \in \calU_{FS}$. If $v \in \widetilde V$ is arbitrary, then there is $v_0 \in \widetilde V_0$ with $d(v,v_0) \leq 2\alpha$. Otherwise, $V_0$ would not be maximal $2\alpha$-separated. Hence $B_\alpha(v) \subseteq B_{3\alpha}(v_0)$. Thus, for all $(v,(\xi_-, \xi_+)) \in FS$, there is $U \in \calU_{FS}$ such that $B_\alpha(v) \times \{(\xi_-, \xi_+)\} \cap FS \subseteq U$.
	
	Finally, we consider distinct sets $U_1, \dots, U_n \in \calU_{FS}$ with $U_1 \cap \dots \cap U_n \neq \emptyset$. In this case, there are $v_1, \dots, v_n \in \widetilde V_0$ such that $B_{3 \alpha}(v_1) \cap \dots \cap B_{3 \alpha}(v_n) \neq \emptyset$. These elements are distinct as well because, for fixed $v$, the $U(v, \xi)$ are either disjoint or equal.
	Without loss of generality we assume that $v_1, \dots, v_n \in B_{6 \alpha}(v_1)$. By construction,
	$$B_{6\alpha}(v) \times U(v, \xi_-) \times U(v,\xi_+) \cap FS \subseteq \big(B_{6\alpha}(v) \cap (\xi_- \rightarrow \xi_+) \big) \times U(v, \xi_-) \times U(v,\xi_+) \cap FS.$$
	A $2\alpha$-separated subset of a $6\alpha$-ball inside a geodesic can consist of at most 6 points. Hence $n \leq 6$ and therefore $\dim \calU_{FS} \leq 5$.
\end{proof}

\begin{lemma}
	\label{lem:cover_of_boundary}
	Assume there is a free, $n$-dimensional, $\pi_1(\Gamma)$-invariant, locally finite cover $\calU_{FS}$ of $FS \subseteq \widetilde V \times \overline{\widetilde V} \times \overline{\widetilde V}$ such that for all $(v,\xi_-, \xi_+) \in FS$ there is $U \in \calU_{FS}$ such that $B_\alpha(v) \times \{(\xi_-, \xi_+)\} \cap FS \subseteq U$.
	Then there is a free, $n$-dimensional, $\pi_1(\Gamma)$-invariant, locally finite cover $\calU_{\partial\widetilde\Gamma}$ of $\widetilde V \times \partial \widetilde \Gamma$ such that for all $(v,\xi) \in \widetilde V \times \partial \widetilde \Gamma$ there is $U \in \calU_{\partial \widetilde \Gamma}$ such that $B_\alpha(v) \times \{\xi\} \subseteq U$.
	
	If all elements of $\calU_{FS}$ are bounded in $\widetilde{V}$-di\-rection, then all elements of $\calU_{\partial\Gamma}$ are bounded in $\widetilde{V}$-direction as well.
\end{lemma}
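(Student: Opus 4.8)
The plan is to transport $\calU_{FS}$ to $\widetilde V\times\partial\widetilde\Gamma$ along a $\pi_1(\Gamma)$-equivariant map into the flow space, following the pattern of \cite{Bartels.2007}. Since $\widetilde\Gamma$ is a tree, a pair $(v,\xi)$ determines the geodesic ray from $v$ to $\xi$; extending this ray backward at $v$ along a fixed $\pi_1(\Gamma)$-equivariant choice of direction (available because $\pi_1(\Gamma)$ acts freely on $\widetilde V$, so one may fix an equivariant linear order on the edges at each vertex and always take the least edge not pointing towards $\xi$) produces a bi-infinite geodesic, hence a point $j(v,\xi)=(v,\eta(v,\xi),\xi)\in FS$. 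This $j$ is $\pi_1(\Gamma)$-equivariant, injective and proper, and it is continuous because $\widetilde V$ is discrete and $\eta(v,-)$ depends on $\xi$ only through the clopen datum of which edge at $v$ points towards $\xi$. I would first take the provisional cover $\calU_{\partial\widetilde\Gamma}:=\{\,j^{-1}(U)\mid U\in\calU_{FS}\,\}$.

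Next I would verify the formal properties, all of which are routine. Equivariance of $j$ transports $\pi_1(\Gamma)$-invariance; injectivity and equivariance of $j$ together with freeness of $\calU_{FS}$ give freeness; preimages of a cover under a map cannot increase its dimension, so the bound $n$ is inherited; continuity of $j$ and local finiteness of $\calU_{FS}$ yield local finiteness; since $j$ preserves the $\widetilde V$-coordinate and the members of $\calU_{FS}$ are bounded in the $\widetilde V$-direction, so are those of $\calU_{\partial\widetilde\Gamma}$; and $\calU_{\partial\widetilde\Gamma}$ covers $\widetilde V\times\partial\widetilde\Gamma$ because $\calU_{FS}$ covers $FS$ and $j$ has values in $FS$.

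The main obstacle is the longness condition $B_\alpha(v)\times\{\xi\}\subseteq U$. The difficulty is that the hypothesis on $\calU_{FS}$ is long in the flow (i.e.\ $\widetilde V$-) direction only for a \emph{fixed} pair of endpoints, whereas one now needs a whole ball in $\widetilde V$ at a single $\xi$, and $B_\alpha(v)$ genuinely branches inside the tree. The geometric input that makes it possible is that, since $\girth(\Gamma)\geq 7\alpha$, the ball $B_\alpha(v)$ is an embedded subtree: for every $v'\in B_\alpha(v)$ the median of $v$, $v'$ and $\xi$ lies on the segment $[v,v_\alpha]$, where $v_\alpha$ is the vertex on $[v,\xi]$ at distance $\lfloor\alpha\rfloor$ from $v$, so $v_\alpha$ lies on $[v',\xi]$ for \emph{all} such $v'$, and under the flow the relevant flow lines in $FS$ all pass through the slice over $v_\alpha$. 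To turn this into the longness statement I would have to choose the backward extensions $\eta(v',\xi)$, $v'\in B_\alpha(v)$, so that the points $(v',\eta(v',\xi),\xi)$ lie in a single member of $\calU_{FS}$; for the standard cover of Lemma~\ref{lem:cover_of_flow_space} this means arranging that all the backward rays lie in a common cone $U(w,\xi_-)$, which is again where the bound $\girth(\Gamma)\geq 7\alpha$ is used. Because of the branching the plain preimage of a single $U$ need not contain all of $j(B_\alpha(v)\times\{\xi\})$, so $\calU_{\partial\widetilde\Gamma}$ probably has to be defined by a somewhat more careful recipe — one that keeps just enough of the middle coordinate of $FS$ to preserve the dimension bound while staying thick enough in the $\widetilde V$-direction. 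This last step, which parallels the corresponding argument in \cite{Bartels.2007}, is the one I expect to cost the most work.
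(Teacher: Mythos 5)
Your general direction — transport $\calU_{FS}$ to $\widetilde V\times\partial\widetilde\Gamma$ along a $\pi_1(\Gamma)$-equivariant map into the flow space — matches the paper, but you miss the two ideas that make the argument close, and you explicitly acknowledge being stuck at exactly the place where the argument would have to finish. First, there is no need to extend the ray $[v,\xi)$ backward to a bi-infinite geodesic by a fixed equivariant choice of direction: the flow space of Definition~\ref{def:coarse_flow_space} admits triples $(v,\xi_-,\xi_+)$ with $\xi_\pm\in\overline{\widetilde V}$, i.e.\ with vertex endpoints, so the paper simply uses $\xi_-:=v$ itself (the eventually constant geodesic at $v$) and considers $(v,v,\xi)\in FS$. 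This eliminates your artificial choice of backward extension $\eta$ together with its attendant continuity and equivariance worries. Second, and more importantly, the paper does not take preimages under this inclusion directly, but under $\Phi_\tau\circ\iota$ for a suitable flow time $\tau$, defining $\iota^{-\tau}(W):=\{(v,\xi): \Phi_\tau(v,v,\xi)\in W\}$. The flow is not a cosmetic step: it is what makes the longness condition hold. After flowing by a large $\tau$ in the direction of $\xi$, the images $\Phi_\tau(v',v',\xi)$ for $v'\in B_\alpha(v)$ all have first coordinate within distance $\alpha$ of $\phi_\tau(v)$ (the geodesics from the $v'$ to $\xi$ merge, so the $\phi_\tau(v')$ line up on a common ray), and the remaining coordinates $(v',\xi)$ become arbitrarily Gromov-close to $(\xi_-,\xi_+)$ as seen from $\phi_\tau(v)$. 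The paper then establishes the existence of a suitable $\tau$ by a compactness-and-contradiction argument, reducing to a fundamental domain for the free $\pi_1(\Gamma)$-action and passing to convergent subsequences in $\overline{\widetilde V}$.

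Your static preimage $j^{-1}(U)$ cannot deliver the ball $B_\alpha(v)\times\{\xi\}$ inside a single cover element, as you yourself observe; the median argument you sketch does not fix this, because the obstruction is not that the geodesics to $\xi$ fail to merge but that, \emph{without} flowing, the first $FS$-coordinate stays at $v'$ and the second stays at $\eta(v',\xi)$, so the $j(v',\xi)$ for $v'\in B_\alpha(v)$ remain $\alpha$-spread in exactly the way that the cover $\calU_{FS}$ of Lemma~\ref{lem:cover_of_flow_space} does not accommodate in its second and third coordinates. The phrase ``a somewhat more careful recipe'' in your last paragraph is precisely the flow and the ensuing compactness argument, so the proposal has a genuine gap at the heart of the lemma.
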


\begin{proof}
	This proof is based on the argumentation in \cite[page 5]{Bartels.2015}.
	For $W \subseteq FS$ and $\tau > 0$ define 
	$$\iota^{-\tau}(W) := \big\{ (v, \xi) \in \widetilde V \times \partial \widetilde \Gamma \where \Phi_\tau(v,v,\xi) \in W \big\}.$$
	We show that there is $\tau > 0$ such that $\iota^{-\tau}\calU_{FS}$ is the desired cover of $\widetilde V \times \partial \widetilde \Gamma$. Note that $g.(\iota^{-\tau}U) = \iota^{-\tau}(g.U)$. This immediately implies that $\iota^{-1}\calU_{FS}$ is a free, $G$-invariant cover. It is also clear that $\dim \iota^{-\tau} \calU_{FS} \leq \dim \calU_{FS}$. 
	
	If $W$ is bounded in $\widetilde{V}$-direction, then $\iota^{-\tau}(W)$ is bounded in $\widetilde{V}$-direction as well because, for an element $(v, v, \xi) \in \iota^{-\tau}(W)$ and the corresponding point $\Phi_\tau(v, v, \xi) = (\phi_\tau(v), v, \xi) \in W$, we have $d(v, \phi_\tau(v)) = \alpha$.
	
	Next, we mention that $\iota^{-\tau} (W)$ is open in $\widetilde V \times \overline{\widetilde \Gamma}$, provided that $W$ is open in $FS$, as it is the preimage under a continuous map.
	
	The only thing left to prove is that for $(v, \xi) \in \widetilde V \times \partial \widetilde \Gamma$ there is $U \in \calU_{\partial \widetilde \Gamma}$ such that $B_\alpha(v) \times \{\xi\} \subseteq U$.
	Assume the contrary, i.e.\ for all $\tau > 0$ there are $(v_\tau, \xi_\tau) \in \widetilde V \times \overline{\widetilde \Gamma}$ such that for all $U \in \calU_{\partial \widetilde \Gamma}$ we have $B_\alpha(v_\tau) \times \{\xi_\tau\} \not\subseteq U$. Set $w_\tau := \Phi_\tau(v_\tau, v_\tau, \xi_\tau)$. Since the cover is $\pi_1(\Gamma)$-equivariant we can assume that all $v_\tau$ lie in the same (finite) fundamental domain of the $\pi_1(\Gamma)$-action. Consequently, we can find a subsequence where $w_\tau$ is constant, say with constant value $w$.
	
	Since $\overline{\widetilde V}$ is compact, there is a subsequence such that $v_\tau$ converges against a point $\xi_-$ which is necessarily on the boundary $\partial \widetilde \Gamma$ because $d(v_\tau, w) = \tau$ increases indefinitely. 
	
	Once again, we choose a subsequence such that $\xi_\tau$ converges inside $\partial \widetilde \Gamma$ against the boundary point $\xi_+$.
	Using the assumptions on $\calU_{FS}$ for $(w, \xi_-, \xi_+)$, we obtain $V \in \calU_{FS}$ such that $B_\alpha(w) \times \{(\xi_-, \xi_+)\} \cap FS \subseteq V$. 
	We even know that $B_\alpha(w) \times V' \cap FS \subseteq V$ for some open subset $V' \subseteq \overline{\widetilde V} \times \overline{\widetilde V}$. If we choose $\tau$ large enough, then we can achieve $B_\alpha(v_\tau) \times \{\xi_\tau\} \subseteq V'$ and therefore get $B_\alpha(w) \times B_\alpha(v_\tau) \times \{\xi_\tau\} \subseteq V$.
	
	We want to show $B_\alpha(v_\tau) \times \{\xi_\tau\} \subseteq \iota^{-\tau} (V)$ for $\tau \gg 0$, which is the contradiction to the assumption in the beginning. We must therefore show
	$$\forall v \in B_\alpha(v_\tau): \Phi_\tau(v,v, \xi_\tau) \in V.$$
	Yet this is clear now, since $d(w, \Phi_\tau(v,v, \xi_\tau)) < \alpha$.
\end{proof}

\begin{lemma}
	\label{lem:enlarge_open_sets_from_boundary}
	Let $T$ be a tree with boundary $\partial T$. There is a map
	\begin{align*}
	\{\text{open subsets of } \partial T \} &\rightarrow \{ \text{open subsets of } \overline{T} \} \\
	U &\mapsto \widehat{U}
	\end{align*}
	such that $U \subseteq \widehat{U}$ and $\widehat{U} \cap \widehat{V} \neq \emptyset \Rightarrow U \cap V \neq \emptyset$.
\end{lemma}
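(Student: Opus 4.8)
The plan is to reduce everything to the basic clopen sets of $\partial T$ coming from the tree structure — the \emph{cones} — and exploit the ultrametric ``nested-or-disjoint'' dichotomy supplied by Definition \ref{def:boundary}.

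First I would fix a base point $v_0$ and, for each vertex $v$ of $T$, define the cone $C_v \subseteq \partial T$ to be the set of ends whose geodesic from $v_0$ passes through $v$. With the metric $d_{v_0}$ of Definition \ref{def:boundary} these cones are exactly the metric balls of $\partial T$; in particular they are clopen and form a basis for the topology of $\partial T$, and the tree property gives the key dichotomy: for any two vertices $v,v'$, either $C_v$ and $C_{v'}$ are disjoint, or one contains the other (the latter happens precisely when one of $v,v'$ lies on the geodesic from $v_0$ to the other). Next I would extend each \emph{nonempty} cone $C = C_v$ to the open set
\[ \widehat{C_v} := C_v \cup \{\, w \in V \;:\; v \in [v_0,w]\,\}, \]
i.e.\ $C_v$ together with the subtree of $T$ ``beyond'' $v$, $v$ included; when $T$ is taken in its geometric realization one enlarges this very slightly along the edge entering $v$ so that it becomes open, which is automatic in the vertex compactification since there vertices are isolated. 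Set also $\widehat{\emptyset}=\emptyset$ and (harmlessly) $\widehat{\partial T}=\overline T$. The crucial observation is that if $x \in \widehat{C_v}\cap\widehat{C_{v'}}$ then $v$ and $v'$ both lie on the single geodesic from $v_0$ to $x$, hence are comparable, hence $C_v \subseteq C_{v'}$ or $C_{v'}\subseteq C_v$; so for nonempty cones, intersecting extensions force nested cones.

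Then for an arbitrary open $U \subseteq \partial T$ I would put $\widehat U := \bigcup\{\widehat C : C \text{ a nonempty cone with } C\subseteq U\}$. This is open as a union of open sets. For $U\subseteq\widehat U$: given $\xi\in U$, openness of $U$ and the basis property yield a cone $C$ with $\xi\in C\subseteq U$, which is nonempty, so $\xi\in\widehat C\subseteq\widehat U$. For the intersection property: if $x\in\widehat U\cap\widehat V$, pick nonempty cones $C\subseteq U$, $C'\subseteq V$ with $x\in\widehat C\cap\widehat{C'}$; by the observation above one of them, say $C$, is contained in the other, so $\emptyset\neq C\subseteq C'\subseteq V$ and $C\subseteq U$, whence $U\cap V\supseteq C\neq\emptyset$.

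The argument is short, and the only points needing care are bookkeeping rather than substance. The first is pinning down the topology of $\overline T$ near the vertices so that the sets $\widehat{C_v}$ are visibly open — trivial for the vertex compactification, a one-line edge-thickening otherwise. The second, and the one easiest to get wrong, is that \emph{empty} cones must be excluded from the union defining $\widehat U$: an empty cone $C\subseteq U$ with $C\not\subseteq V$ would still have $\widehat C$ a nonempty finite subtree and could make $\widehat U\cap\widehat V$ nonempty while $U\cap V=\emptyset$. Beyond that, the tree/ultrametric dichotomy does all the work and there is no real obstacle.
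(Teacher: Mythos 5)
Your proof is correct and follows essentially the same route as the paper's: both decompose $U$ into its basic clopen pieces (you call them cones, the paper writes them as balls $\{\xi : (\xi,\xi_i)_{v}\geq N_i\}$ centred at ends, which are the same thing), extend each to $\overline T$ by the analogous defining condition, and close via the tree's nested-or-disjoint dichotomy — which is precisely the ultrametric inequality $(\xi_i,\xi_j)_v \geq \min\{(\xi,\xi_i)_v,(\xi,\xi_j)_v\}$ the paper invokes. The one genuine point of care you flag, excluding empty cones, is handled in the paper implicitly by always centring the balls at a boundary point $\xi_i$, which forces nonemptiness.
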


\begin{proof}
	Let $U \subseteq \partial T$ be an open subset, i.e.\ $U$ is the union of open balls:
	$$U = \bigcup_{i \in I} B_i.$$
	Open balls are given by $B_i = \{ \xi \in \partial T \where (\xi, \xi_i)_v \geq N_i\}$ for some $\xi_i \in \partial T$ and $N_i \in \bbN$. We can use the same parameter to define open balls in $\overline{T}$, i.e.\ $\widehat{B_i} = \{ \xi \in \overline{T} \where (\xi, \xi_i)_v \geq N_i\}$. Now
	$$\widehat{U} := \bigcup_{i \in I} \widehat{B_i}$$
	is an open subset of $\overline{T}$ and, clearly, $U \subseteq \widehat{U}$.
	
	Assume $\xi \in \widehat{B_i} \cap \widehat{B_j}$, i.e.\ $(\xi, \xi_i)_v \geq N_i$ and $(\xi, \xi_j)_v \geq N_j$. Without loss of generality we assume $N_j \geq N_i$. Then $(\xi_i, \xi_j)_v \geq \min\{(\xi, \xi_i)_v,(\xi, \xi_j)_v\} \geq N_i$ and hence $\xi_j \in B_i$. Thus, $B_i \cap B_j \neq \emptyset$.
\end{proof}

\begin{lemma}
	\label{lem:enlarge_cover_of_boundary}
	There is a $\pi_1(\Gamma)$-equivariant map
	\begin{align*}
	\{\text{open subsets of } \widetilde V \times \partial \widetilde \Gamma\} &\rightarrow \{\text{open subsets of } \widetilde V \times \overline{\widetilde \Gamma}\} \\
	U &\mapsto \widehat{U}
	\end{align*}
	such that $\;\;U \subseteq \widehat{U}$, $\;\;U \cap V = \emptyset \Rightarrow \widehat{U} \cap \widehat{V} = \emptyset\;\;$ and $\;\;\pr_{\widetilde{V}}(U) = \pr_{\widetilde{V}}(\widehat U)$.
\end{lemma}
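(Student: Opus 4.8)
The approach is to reduce to the boundary‑only statement of Lemma~\ref{lem:enlarge_open_sets_from_boundary} by slicing in the $\widetilde V$‑direction. The key observation is that $\widetilde V$, the vertex set of the tree $\widetilde\Gamma$ with its path metric, is a \emph{discrete} space, since distinct vertices lie at distance $\geq 1$. Consequently an open subset $U\subseteq\widetilde V\times\partial\widetilde\Gamma$ is nothing but the datum of its slices $U_v:=\{\xi\in\partial\widetilde\Gamma\mid(v,\xi)\in U\}$, each of which is open in $\partial\widetilde\Gamma$, and likewise for open subsets of $\widetilde V\times\overline{\widetilde\Gamma}$. So I would thicken each slice and reassemble; the task is to do this coherently enough to be $\pi_1(\Gamma)$‑equivariant.

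First I would pin down the construction of Lemma~\ref{lem:enlarge_open_sets_from_boundary} in a canonical form: for an open $W\subseteq\partial\widetilde\Gamma$ and a fixed base vertex $v$, write $W$ as the union of the balls $B=\{\xi\in\partial\widetilde\Gamma\mid(\xi,\xi_0)_v\geq N\}$ it contains ($\xi_0\in\partial\widetilde\Gamma$, $N\in\bbN$), and set $\widehat W:=\bigcup\{\widehat B\mid B\subseteq W\}$, where $\widehat B:=\{\xi\in\overline{\widetilde\Gamma}\mid(\xi,\xi_0)_v\geq N\}$; the inclusion $W\subseteq\widehat W$ and the implication $\widehat W\cap\widehat{W'}\neq\emptyset\Rightarrow W\cap W'\neq\emptyset$ go through exactly as in the proof of that lemma. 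For open $U\subseteq\widetilde V\times\partial\widetilde\Gamma$ I then define $\widehat U:=\bigcup_{v\in\widetilde V}\{v\}\times\widehat{U_v}$, where $\widehat{U_v}$ is the thickening just described, computed with base vertex $v$; since $\widetilde V$ is discrete and each $\widehat{U_v}$ is open in $\overline{\widetilde\Gamma}$, the set $\widehat U$ is open. Now $U\subseteq\widehat U$ is clear; $\pr_{\widetilde V}(U)=\pr_{\widetilde V}(\widehat U)$ holds because $\widehat{U_v}=\emptyset$ exactly when $U_v=\emptyset$; and if $U\cap V=\emptyset$, then $U_v\cap V_v=\emptyset$ for every $v$, so a point of $\widehat{U_v}\cap\widehat{V_v}$ would produce balls $B\subseteq U_v$ and $B'\subseteq V_v$ with $\widehat B\cap\widehat{B'}\neq\emptyset$, hence $B\cap B'\neq\emptyset$ — a contradiction — and therefore $\widehat U\cap\widehat V=\emptyset$.

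The one delicate point, which I expect to be the real obstacle, is $\pi_1(\Gamma)$‑equivariance, and this is exactly why the base vertex of the thickening of the slice at $v$ is taken to be $v$ itself. Using that the Gromov product satisfies $(g.\zeta,g.\xi)_{g.v}=(\zeta,\xi)_v$, one sees that $g\in\pi_1(\Gamma)$ carries the base‑$v$ ball with center $\xi_0$ and parameter $N$ to the base‑$(g.v)$ ball with center $g.\xi_0$ and parameter $N$, in $\partial\widetilde\Gamma$ and, with the same parameters, in $\overline{\widetilde\Gamma}$. Since $(g.U)_{g.v}=g.(U_v)$, the balls contained in the slice of $g.U$ over $g.v$ are precisely the $g$‑translates of those contained in $U_v$, whence the base‑$(g.v)$ thickening of $(g.U)_{g.v}$ equals $g.\widehat{U_v}$. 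Taking the union over all vertices gives $\widehat{g.U}=g.\widehat U$. Everything else is formal, so the substance of the proof is the bookkeeping that matches each base point to its slice under the $\pi_1(\Gamma)$‑action.
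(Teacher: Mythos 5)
Your proof is correct, and it takes a genuinely different route from the paper's. The paper writes $U$ as a union $\bigcup_i U_i' \times U_i''$ of open boxes, thickens each $U_i''$ using the fixed base point $v_0$ of Definition~\ref{def:boundary}, and then has to \emph{impose} $\pi_1(\Gamma)$-equivariance by choosing a representative in each orbit of open sets and transporting the construction via $\widehat{g.U}:=g.\widehat{U}$. You instead exploit the discreteness of $\widetilde V$ to get a \emph{canonical} decomposition into slices $\{v\}\times U_v$, and the crucial move is to thicken the slice $U_v$ with the Gromov product based at $v$ itself, not at a fixed $v_0$. Because base points travel with slices, the identity $(g.\zeta,g.\xi)_{g.v}=(\zeta,\xi)_v$ makes equivariance automatic rather than decreed; no orbit representatives are needed. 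This also sidesteps a subtlety in the paper's construction: the box decomposition is non-canonical, so when $\mathrm{Stab}(U)\neq 1$ the definition $\widehat{g.U}:=g.\widehat{U}$ requires that the chosen decomposition be stabiliser-invariant, which the paper does not address — your slicing decomposition is manifestly invariant, so the issue never arises. The other three checks (containment, projection, and disjointness via the ultrametric fact that overlapping closed balls are nested, pulled from Lemma~\ref{lem:enlarge_open_sets_from_boundary}) are carried out slice by slice and are sound, since the thickening of each slice uses a single base vertex for both $U_v$ and $V_v$. In short, the paper's proof buys brevity by outsourcing equivariance to a choice; yours buys canonicity and robustness at the cost of unpacking Lemma~\ref{lem:enlarge_open_sets_from_boundary} with a moving base point.
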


\begin{proof}
	Let $U \subseteq \widetilde V \times \partial \widetilde \Gamma$ be open. We can therefore write $U$ as 
	$$U = \bigcup_{i \in I} U_i' \times U_i'',$$
	where $U_i' \subseteq \widetilde V$ and $U_i'' \subseteq \partial \widetilde \Gamma$ are open subsets. Choose a representative for every $\pi_1(\Gamma)$-orbit of open subsets of $\widetilde V \times \partial \widetilde \Gamma$ and set \vspace{-1ex}
	$$\widehat{U} = \bigcup_{i \in I} U_i' \times \widehat{U_i''},$$
	where $\widehat{U_i''}$ is the open subset of $\overline{\widetilde \Gamma}$ with $U_i'' \subseteq \widehat{U_i''}$.
	For all other open subsets we define $\widehat{g.U} := g. \widehat{U}$. Clearly, $U \subseteq \widehat{U}$ and the map $U \mapsto \widehat{U}$ is $\pi_1(\Gamma)$-equivariant.
	
	\vspace{1ex}
	We show the second property via contraposition.
	Assume $\widehat{U} \cap \widehat{V} \neq \emptyset$.
	\begin{align*}
	\emptyset \neq \widehat{U} \cap \widehat{V} &= \bigg( \bigcup_{i \in I} U_i' \times \widehat{U_i''} \bigg) \cap \bigg( \bigcup_{j \in J} V_j' \times \widehat{V_j''} \bigg)  \\
	&= \bigcup_{(i,j) \in I \times J} (U_i' \times \widehat{U_i''}) \cap (V_j' \times \widehat{V_j''}) \\
	&= \bigcup_{(i,j) \in I \times J} (U_i' \cap V_j') \times (\widehat{U_i''} \cap \widehat{V_j''})
	\end{align*}
	$$\Leftrightarrow \exists (i,j) \in I \times J : U_i' \cap V_j' \neq \emptyset \text{ and } \widehat{U_i''} \cap \widehat{V_i''} \neq \emptyset$$
	Now, Lemma \ref{lem:enlarge_open_sets_from_boundary} tells us that $U_i'' \cap V_j'' \neq \emptyset$. 
	Eventually, the same calculation as above, this time for $U$ and $V$ instead of $\widehat{U}$ and $\widehat{V}$, shows $\emptyset \neq \bigcup_{i,j} (U_i' \cap V_j') \times (U_i'' \cap V_j'') =$ $\dots$ $= U \cap V$.
\end{proof}

\begin{lemma}
	\label{lem:boundary_thickening}
	Let $\calU$ be a collection of open sets of $\widetilde{V} \times \overline{\widetilde{\Gamma}}$ such that for all $(v, \xi) \in \widetilde{V} \times \partial \widetilde\Gamma$ there is a $U \in \calU$ such that $B_\alpha(v) \times \{\xi\} \subseteq U$.
	Let $K \subseteq \widetilde V$ be compact, i.e.\ finite.
	Then there exists a number $N \in \bbN$ such that for all 
	$$(v, \xi) \in K \times \set{\xi \in \overline{\widetilde \Gamma}}{\exists \zeta \in \partial \widetilde\Gamma: (\zeta, \xi)_{v_0} \geq N}$$ 
	there is a $U \in \calU$ such that $B_\alpha(v) \times \{\xi\} \subseteq U$.
\end{lemma}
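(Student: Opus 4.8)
The plan is to argue by contradiction using the compactness of $K$ and of $\overline{\widetilde\Gamma}$, mimicking the end of the proof of Lemma \ref{lem:cover_of_boundary}. Suppose no such $N$ exists. Then for every $N \in \bbN$ there is a pair $(v_N, \xi_N) \in K \times \overline{\widetilde\Gamma}$ with $(\zeta_N, \xi_N)_{v_0} \geq N$ for some $\zeta_N \in \partial\widetilde\Gamma$, yet $B_\alpha(v_N) \times \{\xi_N\} \not\subseteq U$ for all $U \in \calU$. Since $K$ is finite we may pass to a subsequence along which $v_N$ is constant, equal to some $v \in K$. Since $\overline{\widetilde\Gamma}$ is compact we pass to a further subsequence along which $\xi_N \to \xi_\infty \in \overline{\widetilde\Gamma}$. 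The condition $(\zeta_N, \xi_N)_{v_0} \geq N \to \infty$ forces $\xi_\infty \in \partial\widetilde\Gamma$: indeed if $\xi_\infty$ were an eventually-constant geodesic (i.e.\ a vertex), then for some fixed $k$ the geodesic representing $\xi_\infty$ is constant after step $k$, and for $N$ large $\xi_N$ agrees with $\xi_\infty$ up to a large index while $\zeta_N$ agrees with $\xi_N$ up to index $\geq N > k$, so $\zeta_N$ would be a genuine ray agreeing with a constant geodesic arbitrarily far out — impossible.

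Now apply the hypothesis of the lemma to the point $(v, \xi_\infty) \in \widetilde V \times \partial\widetilde\Gamma$: there is $U \in \calU$ with $B_\alpha(v) \times \{\xi_\infty\} \subseteq U$. Since $U$ is open in $\widetilde V \times \overline{\widetilde\Gamma}$ and $B_\alpha(v)$ is finite (a finite set of isolated vertices), for each $w \in B_\alpha(v)$ there is an open neighbourhood $W_w$ of $\xi_\infty$ in $\overline{\widetilde\Gamma}$ with $\{w\} \times W_w \subseteq U$; put $W := \bigcap_{w \in B_\alpha(v)} W_w$, an open neighbourhood of $\xi_\infty$, so that $B_\alpha(v) \times W \subseteq U$. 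Because $\xi_N \to \xi_\infty$ and $v_N = v$ for all $N$ in our subsequence, we have $\xi_N \in W$ for all sufficiently large $N$, hence $B_\alpha(v_N) \times \{\xi_N\} = B_\alpha(v) \times \{\xi_N\} \subseteq B_\alpha(v) \times W \subseteq U$, contradicting the choice of $(v_N, \xi_N)$. This completes the proof.

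The only genuinely delicate point is the step showing $\xi_\infty \in \partial\widetilde\Gamma$ rather than merely in $\overline{\widetilde\Gamma}$; everything else is a routine compactness-and-open-cover argument. One could alternatively phrase this step directly in terms of the metric $d_{v_0}$ on $V \cup \partial\widetilde\Gamma$ from Definition \ref{def:boundary}: the set $\{\xi : \exists \zeta \in \partial\widetilde\Gamma,\ (\zeta,\xi)_{v_0} \geq N\}$ is contained in the closed $e^{-N}$-neighbourhood of $\partial\widetilde\Gamma$, and as $N \to \infty$ any sequence drawn from these sets accumulates only on $\partial\widetilde\Gamma$, since $\partial\widetilde\Gamma$ is closed in the compact space $\overline{\widetilde\Gamma}$ and the metric separates vertices from the boundary at a scale bounded below on each finite ball. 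I would present whichever formulation is cleanest given the conventions already fixed in Definition \ref{def:boundary}.
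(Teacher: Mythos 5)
Your proof is correct, but it takes a different route from the paper's. The paper argues directly: for each $v \in K$ and $\xi \in \partial\widetilde\Gamma$, it chooses $U$ and a thickening radius $N_{v,\xi}$ with $B_\alpha(v) \times B(\xi, N_{v,\xi}) \subseteq U$, then uses compactness of $\partial\widetilde\Gamma$ to extract a finite subcover $\Xi_v$ for each $v$, and finally sets $N := \max\{N_{v,\xi} : v \in K, \xi \in \Xi_v\}$; the desired inclusion then follows from the ultrametric property $\xi \in B(\xi', N)$. You instead argue by contradiction via sequential compactness, extracting a convergent subsequence $\xi_N \to \xi_\infty$ and showing $\xi_\infty \in \partial\widetilde\Gamma$ so that the hypothesis can be applied there. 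Both approaches are sound and rest on the same two facts — finiteness of $K$ (and of $B_\alpha(v)$) and compactness of the boundary — and both have an identical ``thicken the open set $U$ in the $\overline{\widetilde\Gamma}$-direction'' step. The paper's version is constructive and produces an explicit $N$, which reads a bit more cleanly and mirrors its own downstream use in Theorem \ref{thm:final_cover} where $N$ feeds into the definition of $R$; your version is shorter once one is willing to pass to subsequences, and it is a closer stylistic match to the proof of Lemma \ref{lem:cover_of_boundary}. One small polish: when you argue that $\xi_\infty \in \partial\widetilde\Gamma$, you could make the key estimate crisper by invoking the tree ultrametric inequality directly, $(\zeta_N, \xi_\infty)_{v_0} \geq \min\{(\zeta_N, \xi_N)_{v_0}, (\xi_N, \xi_\infty)_{v_0}\} \to \infty$, and noting that no genuine ray $\zeta_N$ can agree to depth $> k$ with a geodesic that is constant after step $k$.
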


\begin{proof}
	Let $v \in K$ and $\xi \in \partial\widetilde\Gamma$.
	By assumption, there is a $U \in \calU$ such that $B_\alpha(v) \times \{\xi\} \subseteq U$. Since $U$ is open, a small thickening of $B_\alpha(v) \times \{\xi\}$ in $\overline{\widetilde\Gamma}$-direction is still within $U$, i.e.
	$$ \exists N_{v, \xi} \in \bbN: B_\alpha(v) \times B(\xi, N_{v, \xi}) \subseteq U.$$
	For fixed $v$, the sets $B(\xi, N_{v,\xi})$ cover the whole boundary $\partial\widetilde\Gamma$.
	Since the boundary is compact, we can choose a finite subcover by choosing an appropriate finite subset $\Xi_v \subseteq \partial\widetilde\Gamma$.
	Now set
	$$N := \max \set{N_{v,\xi}}{v \in K, \xi \in \Xi_v}.$$
	Assume $v \in K$ and $\xi \in \overline{\widetilde{\Gamma}}$ such that there is a $\zeta \in \partial\widetilde{\Gamma}$ with $(\zeta, \xi)_{v_0} \geq N$.
	There must be a $\xi' \in \Xi_v$ such that $\zeta \in B(\xi', N)$, which automatically implies $\xi \in B(\zeta, N) = B(\xi', N)$.
	Consequently, we can find a $U \in \calU$ with
	\begin{equation*}
	B_\alpha(v) \times \{\xi\} 
	\;\subseteq\; B_\alpha(v) \times B(\xi', N) 
	\;\subseteq\; B_\alpha(v) \times B(\xi', N_{v, \xi'}) 
	\;\subseteq\; U. \qedhere
	\end{equation*}
\end{proof}
	\section{The Lower Left Triangle (Step 3)}

There are several ways to modify an additive category without affecting its $K$-theory. The reason to modify a given additive category is to make room for constructions that could not be made in the original category.

\subsection*{Chain Complexes}

In this section we will closely follow the definitions and constructions of \cite[Section 6]{Bartels.2007} and adjust them to our situation.

\begin{definition}
	Let $G$ be a group and $\calA$ an additive category.
	Define the $\varepsilon$-filtered category $\overline{\calC}_G(X; \calA)$ exactly as in Definition \ref{def:controlled_categories} but without the restriction on the objects that its modules must be distributed locally finitely over $X$.
	For morphisms, replace Condition \ref{c_rows_columns_finite} by the following. Given a morphism $\varphi: (S, \pi, M) \rightarrow (S', \pi', M')$
	\begin{enumerate}
		\item[1'] the matrix entries $\varphi_{s'}^s$ combine to a morphism $\bigoplus_{s \in S} M(S) \rightarrow \bigoplus_{s' \in S'} M'(s')$ in the category $\calA^\kappa$, where $\kappa$ is a suitably chosen cardinal. 
	\end{enumerate}
	Clearly, $\calC_G(...)$ is a subcategory of $\overline \calC_G(...)$. We have analogous versions for $\calO_{G}(...)$ as well.
\end{definition}

\begin{definition}
	Let $\calA$ be an additive category.
	\begin{enumerate}
		\item The category $\Chgeq \calA$ has chain complexes in $\calA$ as objects that are bounded from below and chain maps as morphisms.
		\item The full subcategory $\Chf\calA \subseteq \Chgeq \calA$ consists of all finite chain complexes.
	\end{enumerate}
	If $\calA$ is an $\varepsilon$-filtered category, we call a chain morphism $\varepsilon$-controlled if it is $\varepsilon$-controlled in every degree.
	This makes $\Chf\calA$ an $\varepsilon$-filtered category.
	In $\Ch^\geq\calA$, however, there are chain morphisms that are not $\varepsilon$-controlled for any $\varepsilon > 0$.
	Let $\Idem(\calA)$ be the idempotent completion of an additive category and let $\calA$ be a full additive subcategory of an additive category $\overline{\calA}$.
	\begin{enumerate}
		\setcounter{enumi}{2}
		\item We define $\chhfd(\calA\subseteq \Idem(\overline{\calA}))$ as the full subcategory of $\Chgeq\Idem(\overline{\calA})$, consisting of those objects that are homotopy retracts of objects in $\Chf\calA$.
	\end{enumerate}
\end{definition}

We are interested in the case $\calA = \calO_G(X \mid Y; \calA)$, $\overline{\calA}= \overline{\calO}_G(X \mid Y; \calA)$ and bounded products thereof.
In any case, we abbreviate $\chhfd(\calA\subseteq \Idem(\overline{\calA}))$ to $\chhfd\calA$, since the category $\Idem(\overline{\calA})$ should be apparent in those cases.

When we consider these categories as Waldhausen categories, we regard chain homotopy equivalences as weak equivalences and degree-wise inclusions of direct summands as cofibrations.

\subsection{The Tilde-Construction}
\label{sec:tilde_construction}

\begin{definition}[Cf.\ {\cite[Subsection 8.2]{Bartels.2005}}]
	Let $\calW$ be a Waldhausen category. We define the Waldhausen category $\widetilde \calW$ as the category whose objects are sequences 
	$$C_0 \xrightarrow{c_0} C_1 \xrightarrow{c_1} C_2 \xrightarrow{c_2} \dots$$
	in $\calW$, in which the $c_n$ are simultaneously cofibrations and weak equivalences. A morphism is an equivalence class of commutative diagrams of the form
	\begin{center}
		\begin{tikzcd}
		C_m \arrow[r, "c_m"] \arrow[d, "f_m"] & C_{m+1} \arrow[r, "c_{m+1}"] \arrow[d, "f_{m+1}"] & C_{m+1} \arrow[r, "c_{m+2}"] \arrow[d, "f_{m+2}"] & \dots\phantom{,}  \\
		D_{m+k} \arrow[r, "d_{m+k}"] & D_{m+k+1} \arrow[r, "d_{m+k+1}"] & D_{m+k+2} \arrow[r, "d_{m+k+2}"] & \dots\text{,}
		\end{tikzcd}
	\end{center}
	where $m, k \in \bbN$. We identify such a diagram with
	\begin{center}
		\begin{tikzcd}
		C_{m+1} \arrow[r, "c_{m+1}"] \arrow[d, "f_{m+1}"] & C_{m+2} \arrow[r, "c_{m+2}"] \arrow[d, "f_{m+2}"] & C_{m+3} \arrow[r, "c_{m+3}"] \arrow[d, "f_{m+3}"] & \dots\phantom{.}  \\
		D_{m+k+1} \arrow[r, "d_{m+k+1}"] & D_{m+k+2} \arrow[r, "d_{m+k+2}"] & D_{m+k+3} \arrow[r, "d_{m+k+3}"] & \dots\phantom{.} 
		\end{tikzcd}
	\end{center}
	and also with
	\begin{center}
		\begin{tikzcd}
		C_m \arrow[r, "c_m"] \arrow[d, "d_{m+k} \comp f_m"] & C_{m+1} \arrow[r, "c_{m+1}"] \arrow[d, "d_{m+k+1} \comp f_{m+1}"] & C_{m+1} \arrow[r, "c_{m+2}"] \arrow[d, "d_{m+k+2} \comp f_{m+2}"] & \dots\phantom{.} \\
		D_{m+k+1} \arrow[r, "d_{m+k+1}"] & D_{m+k+2} \arrow[r, "d_{m+k+2}"] & D_{m+k+3} \arrow[r, "d_{m+k+3}"] & \dots.
		\end{tikzcd}
	\end{center}
\end{definition}

\begin{lemma}
	The inclusion of an additive category $\calA$ into $\Chhfd(\calA)$ induces an isomorphism in $K$-theory.
\end{lemma}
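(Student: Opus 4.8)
The plan is to factor the inclusion of $\calA$ into $\Chhfd(\calA)$ --- the tilde-construction $\widetilde{\calW}$ applied to $\calW = \chhfd\calA$ --- as a composite of three exact functors of Waldhausen categories,
\[
\calA \;\hookrightarrow\; \Chf\calA \;\hookrightarrow\; \chhfd\calA \;\hookrightarrow\; \Chhfd\calA ,
\]
where the first functor views an object of $\calA$ as a chain complex concentrated in degree $0$ (using $\calA \subseteq \Idem(\overline\calA)$), the second is the honest inclusion, since a finite complex over $\calA$ is trivially a homotopy retract of itself, and the third sends a complex $C$ to the constant sequence $C \xrightarrow{\id} C \xrightarrow{\id} C \to \cdots$, whose structure maps are simultaneously cofibrations and chain homotopy equivalences. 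Throughout, cofibrations are the degreewise split monomorphisms and weak equivalences the chain homotopy equivalences; with these choices all three categories are Waldhausen categories with cylinder functors (mapping cylinders of chain maps, extended levelwise). It then suffices to show that each of the three functors induces an isomorphism in $K$-theory.

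\emph{First functor.} That $\calA \to \Chf\calA$ is a $K$-theory equivalence is the Gillet--Waldhausen theorem for additive categories: every finite complex $C$ carries a finite filtration by subcomplexes whose successive quotients are the shifted objects $C_i[i]$, so the additivity theorem identifies the $K$-theory class of $C$ with $\sum_i (-1)^i [C_i]$, and a contractible finite complex over $\calA$ splits as a direct sum of elementary complexes $D \xrightarrow{\id} D$ and hence is $K$-theoretically trivial. Packaged through the $S_\bullet$-construction and the additivity theorem this yields the equivalence, exactly as in \cite[Section~6]{Bartels.2007}.

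\emph{Second functor.} By construction $\chhfd\calA$ is obtained from $\Chf\calA$ by adjoining all homotopy retracts, and since idempotents of bounded-below complexes split in $\Chgeq\Idem(\overline\calA)$ this is precisely the idempotent completion of the Waldhausen category $\Chf\calA$ --- the passage from finite to homotopy finitely dominated complexes. Thomason's cofinality theorem shows that $K(\Chf\calA) \to K(\chhfd\calA)$ is an isomorphism in degrees $\geq 1$ and a split injection on $K_0$, with cokernel controlled by Wall-type finiteness obstructions; this cokernel disappears once one works with the non-connective (Pedersen--Weibel) algebraic $K$-theory spectrum $\mathbb K^{\text{alg}}_\calA$, which is in any case the $K$-theory the Farrell--Jones framework of Section~\ref{sec:basics} forces upon us. With that convention the second functor is a $K$-theory equivalence, again as in \cite[Section~6]{Bartels.2007}. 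This step is where I expect the real work to sit: one must make sure that fattening finite complexes into homotopy finitely dominated ones genuinely loses no $K$-theory, and it is precisely here that passing to the non-connective spectrum is essential.

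\emph{Third functor.} Finally, $\chhfd\calA \to \Chhfd\calA$ is handled by Waldhausen's approximation theorem. The decisive observation is that in any object $(C_0 \xrightarrow{c_0} C_1 \xrightarrow{c_1} \cdots)$ of $\Chhfd\calA$ all structure maps $c_n$ are weak equivalences, so the object is canonically weakly equivalent to $C_0$; this makes the approximation axiom (App~1) automatic, while (App~2) holds because a morphism from a constant sequence $(C = C = \cdots)$ into $(C_\bullet)$ is --- after the shift built into the definition of morphisms of $\Chhfd\calA$ --- represented by a single chain map $C \to C_k$, which one factors through its mapping cylinder as a cofibration followed by a weak equivalence with middle term again a constant sequence. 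The approximation theorem then delivers the final equivalence; this is the $K$-theoretic statement about the tilde-construction recorded in \cite[Subsection~8.2]{Bartels.2005}. Since none of the three steps uses the geometry of graphs with large girth, the argument is verbatim the one in the cited references.
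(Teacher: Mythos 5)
Your proof factors the inclusion through $\chhfd(\calA)$ exactly as the paper does and ultimately rests on the same two external results, so the approach coincides with the paper's: the paper cites \cite[Lemma~6.5]{Bartels.2007} for $\calA\subseteq\chhfd(\calA)$ and \cite[Proposition~8.2]{Bartels.2005} for $\chhfd(\calA)\subseteq\Chhfd(\calA)$ (after observing that the hypotheses $(M)$, $(H)$, $(Z)$ hold), which is precisely what your three steps feed into. You merely unfold those two citations into their underlying ingredients --- Gillet--Waldhausen plus a cofinality/non-connective argument for the first, and a sketch of the approximation-theorem argument behind the tilde-construction for the second --- a finer level of detail rather than a genuinely different route.
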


\begin{proof}
	Firstly, the inclusion $\calA \subseteq \chhfd(\calA)$ induces an isomorphism by \cite[ Lemma 6.5]{Bartels.2007}.
	Secondly, \cite[ Proposition 8.2]{Bartels.2005} implies that $\chhfd(\calA) \subseteq \Chhfd(\calA)$ induces an isomorphism as well.
	Here, one should note that the conditions $(M)$, $(H)$ and $(Z)$ defined in \cite[Section 8.2]{Bartels.2005} are indeed satisfied for $\chhfd(\calA)$.
\end{proof}

\subsection{The Controlled Transfer}

\begin{definition}
	\label{def:singular_chain_complex}
	For a metric space $X$ we define an object $C_*^\sing(X) \in \Chgeq\overline{\calC}(X; \Ab)$ as follows.
	The object $(S_n, \pi_n, M_n)$ consists of
	\begin{itemize}[label=]
		\item $S_n := S_n(X)$, the set of all singular $n$-simplices in X,
		\item $\pi_n := \bary$, the map sending $\sigma \in S_n(X)$ to its barycentre $\sigma(\bary(\Delta^n)) \in X$ and
		\item $M_n := \bbZ$, the map which assigns the free abelian group $\bbZ$ to every simplex.
	\end{itemize}
	The boundary map $\partial_n :(S_n, \pi_n, M_n) \rightarrow (S_{n-1}, \pi_{n-1}, M_{n-1})$ is given by 
	$$(\partial_n)^\sigma_{\sigma'} = \sum_{\partial_i \sigma = \sigma'} (-1)^i.$$
	Note that for fixed $\sigma'$ there might be infinitely many $\sigma$ with $\partial_i \sigma = \sigma'$.
\end{definition}

\begin{definition}
	Let $\delta > 0$. 
	For a metric space $X$ we define an object $C_*^{\sing,\delta}(X) \in \Chgeq\overline{\calC}(X; \Ab)$ exactly as in Definition \ref{def:singular_chain_complex} with the difference that $S_n$ only consists of those singular $n$-simplices that are of diameter less than $\delta$.
\end{definition}

\begin{lemma}[{Cf.\ \cite[Lemma 6.9]{Bartels.2007}}]
	\label{lem:finite_chain_complex}
	There exists a finite chain complex $D^\delta_*$ in $\Chf\,\calC(\overline \Gamma)$ whose differentials are $\delta$-controlled together with maps 
	$$C_*^{\sing, \delta}(\overline \Gamma) \xrightarrow{i} D_*^\delta \xrightarrow{r} C_*^{\sing, \delta}(\overline \Gamma)$$
	and a chain homotopy $h: r \comp i \simeq \id$ such that $i$, $r$ and $h$ are $6\delta$-controlled morphisms in $\Ch^\geq\overline\calC(\overline \Gamma; \Ab)$.
\end{lemma}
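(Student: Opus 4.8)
The plan is to realise $D_*^\delta$ as the simplicial chain complex of the nerve of a sufficiently fine finite cover of the compact space $\overline\Gamma$ and to build $i$, $r$, and $h$ with the classical ``small simplices suffice'' machinery of singular homology, tracking throughout how far each construction displaces barycentres; this is the strategy of \cite[Lemma 6.9]{Bartels.2007}. First I would fix a finite open cover $\calV = \{V_1, \dots, V_N\}$ of $\overline\Gamma$ --- one exists because $\overline\Gamma$, a compactified locally finite tree, is compact --- with all members of diameter $< \delta$ (taking them as small as $\delta/6$ is what yields the constant $6$ below). Let $\calN$ be the nerve of $\calV$ and, for every simplex $\tau = \{i_0, \dots, i_k\}$ of $\calN$, choose a point $p_\tau \in V_{i_0} \cap \dots \cap V_{i_k}$. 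Set $D_*^\delta := C^{\operatorname{simp}}_*(\calN)$, regarded as an object of $\Chf\,\calC(\overline\Gamma)$ by attaching to the generator of $\tau$ the module $\bbZ$ and the point $p_\tau$. This is a finite chain complex, automatically compactly supported since $\overline\Gamma$ is compact, and --- as a simplex of $\calN$ together with any of its faces lies in a common $V_i$ of diameter $< \delta$ --- its differential is $\delta$-controlled.

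For $i$ I would compose the partition-of-unity map $f^\calV : \overline\Gamma \to |\calV|$ to the nerve (cf.\ \cite[Section~4.1]{Bartels.2007}) with a chain-level simplicial approximation $C_*^{\sing}(|\calV|) \to C^{\operatorname{simp}}_*(|\calV|) = D_*^\delta$ and restrict the composite to $C_*^{\sing,\delta}(\overline\Gamma)$. A singular simplex $\sigma$ of diameter $< \delta/6$ has image contained in the union of those $V_i$ meeting $\sigma(\Delta^n)$, so $(f^\calV)_*\sigma$ is supported on simplices of $|\calV|$ spanned by such $V_i$; the point $p_\tau$ attached to any such simplex $\tau$ lies within twice the cover diameter of the barycentre of $\sigma$, and simplicial approximation on the finite complex $|\calV|$ enlarges this by only a bounded amount, so $i$ is $6\delta$-controlled. (We do not --- and in general cannot --- arrange $i \comp r \simeq \id_{D_*^\delta}$; only $C_*^{\sing,\delta}(\overline\Gamma)$ is asserted to be a homotopy retract of $D_*^\delta$, not conversely.)

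To build $r : D_*^\delta \to C_*^{\sing,\delta}(\overline\Gamma)$ I would induct over the skeleta of $\calN$: given $r$ on the $(k-1)$-skeleton, for a $k$-simplex $\tau$ the chain $r(\partial\tau)$ is a singular cycle whose carrier lies in a ball about $p_\tau$ of radius $O(\delta)$, and I fill it by coning off from $p_\tau$ along geodesics. This is unambiguous and does not enlarge supports because $\overline\Gamma$ is a compactified tree, hence has unique geodesics and geodesically convex metric balls; interleaving a barycentric subdivision keeps all simplices of diameter $< \delta$, so the filling lands in $C_*^{\sing,\delta}(\overline\Gamma)$ and $r$ comes out $6\delta$-controlled. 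The homotopy $h$ with $r \comp i \simeq \id$ is obtained by the same coning trick, degree by degree: having defined $h$ through degree $n-1$, the chain $(r\comp i)(\sigma) - \sigma - h(\partial\sigma)$ attached to an $n$-simplex $\sigma$ is a cycle carried in a ball of radius $O(\delta)$ about $\sigma(\Delta^n)$ --- here one uses that $r \comp i$ is $6\delta$-controlled --- and, $\overline\Gamma$ being contractible, this cycle is a boundary, so coning it off from a point of its carrier produces a small filling (again subdivided so as to stay of diameter $< \delta$); the resulting $h$ is then a $6\delta$-controlled chain homotopy in $\Ch^\geq\overline\calC(\overline\Gamma;\Ab)$.

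The step I expect to be the main obstacle is precisely this controlled filling: one must fill every small singular cycle in $\overline\Gamma$ by a small singular chain, uniformly and compatibly enough along the two inductions that $r$ and $h$ assemble into an honest chain map and an honest chain homotopy, and with all the diameter and displacement estimates tight enough to land at exactly $6\delta$. This is where the geometry of $\overline\Gamma$ is indispensable --- uniqueness of geodesics and convexity of metric balls in a tree make the cone operator well defined and support-non-increasing --- and it is also where the barycentric subdivisions must be threaded in with care, so as not to violate the diameter constraint that defines $C_*^{\sing,\delta}(\overline\Gamma)$.
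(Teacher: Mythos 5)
Your approach is genuinely different from the paper's. You would realise $D_*^\delta$ as the simplicial chain complex of the nerve of a fine cover of $\overline\Gamma$ and build $i$, $r$, $h$ directly by controlled coning inside $\overline\Gamma$. The paper instead takes a homotopy $H : \overline\Gamma \times [0,1] \to \overline\Gamma$ with $H_0 = \id_{\overline\Gamma}$ and $H_t(\overline\Gamma) \subseteq \Gamma$ for all $t > 0$, chooses $\varepsilon$ so small that $\sup_{x} d(x, H_\varepsilon(x)) < \delta$, and uses $(H_\varepsilon)_*$ to push $C_*^{\sing,\delta}(\overline\Gamma)$ into $C_*^{\sing,2\delta}(K_\varepsilon)$ for a \emph{finite} subcomplex $K_\varepsilon \subseteq \Gamma$; the prism of $H|_{[0,\varepsilon]}$ supplies the $2\delta$-controlled homotopy back to the inclusion, and the remaining equivalence between simplicial and small singular chains on the finite complex $K_\varepsilon$ is delegated to \cite[Lemma~6.7~(iii)]{Bartels.2007} after one subdivision. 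So $D_*^\delta$ is a subdivided finite subcomplex of the interior tree, not the nerve of a cover, and all the delicate controlled filling happens inside a finite simplicial complex away from the boundary.

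The step you single out as the main obstacle is in fact where your proposal breaks down, and the reason is the geometry of $\overline\Gamma$ itself, not merely the bookkeeping. You justify the coning operator with the claim that $\overline\Gamma$, being a compactified tree, ``has unique geodesics and geodesically convex metric balls.'' That is false for the metric controlling morphisms in $\calC(\overline\Gamma)$: that metric is a compactifying metric as in Definitions~\ref{def:boundary} and \ref{def:equivariant_metric}, under which $\overline\Gamma$ is compact but not a geodesic space. Concretely, for boundary points $\xi, \zeta \in \partial\widetilde\Gamma$ with Gromov product $(\xi,\zeta)_v = n$ one has $d_v(\xi,\zeta) = e^{-n}$, yet every path in $\overline\Gamma$ from $\xi$ to $\zeta$ must cross the branching vertex $p$ of the tree geodesic (the two sides of $p$ lie in different components of $\overline\Gamma \smallsetminus \{p\}$, since $\partial\widetilde\Gamma$ is totally disconnected), and already $d_v(p,\xi) = d_v(p,\zeta) = e^{-n}$, so every path has length at least $2e^{-n}$ and no geodesic exists. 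Tree geodesics are not $d_v$-geodesics, $d_v$-balls are not convex, and consequently neither the coning operator nor the barycentric subdivision you interleave is canonically defined or support-controlled near the boundary. Without a replacement for this device the inductive constructions of $r$ and $h$ do not yield $6\delta$-controlled maps. Contracting into a finite subcomplex $K_\varepsilon$ of the interior first, as the paper does, is precisely the move that avoids having to fill cycles of $\overline\Gamma$ anywhere near $\partial\widetilde\Gamma$.
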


\begin{proof}
	Let $H: \overline \Gamma \times [0,1] \rightarrow \overline \Gamma$ be a homotopy with $H_0 = \id_{\overline \Gamma}$ and $H_t(\overline \Gamma) \subseteq \Gamma$ for all $t > 0$. Since $\overline \Gamma$ is compact, so is $H_t(\overline \Gamma)$ and hence the smallest subcomplex $K_t \subseteq \Gamma$ containing $H_t(\overline \Gamma)$ is compact as well (i.e.\ $K_t$ consists of finitely many simplices).
	
	Consider the map $t \mapsto \sup_{x \in \overline \Gamma} d(x, H_t(x))$. It is continuous and clearly maps $0$ to $0$. Thus, we can find $\varepsilon > 0$ such that $H_\varepsilon$ maps $\delta$-balls into $2\delta$-balls.
	The map $H_\varepsilon$ and the inclusion $\inc: K_\varepsilon \rightarrow \overline \Gamma$ induce well-defined chain maps
	$$C_*^{\sing, \delta}(\overline \Gamma) \xrightarrow{(H_\varepsilon)_*} C_*^{\sing, 2\delta}(K_\varepsilon) \xrightarrow{\inc_*} C_*^{\sing, 2\delta}(\overline \Gamma)$$
	whose composition is homotopic to the inclusion $C_*^{\sing, \delta}(\overline \Gamma) \rightarrow C_*^{\sing, 2\delta}(\overline \Gamma)$ via a homotopy that is $2\delta$-controlled.
	
	After subdividing $K_\varepsilon$, we can assume that $K_\varepsilon = |A|$ is the geometric realization of an abstract simplicial complex all of whose simplices have diameter smaller than $\delta$. Eventually, \cite[Lemma~6.7~(iii)]{Bartels.2007} states that there is a $2\delta$-controlled homotopy equivalence $C_*(A) \rightarrow C_*^{\sing, 2\delta}(K_\varepsilon)$, so that we can set $D_*^\delta := C_*(A)$.
\end{proof}

Just as in Definition \ref{def:equivariant_metric}, choose $G_n$-equivariant maps $q_n: \widetilde{V}_n \rightarrow G_n$ such that the diameter of $q_n^{-1}(1)$ is bounded by $2 \cdot \diam(\Gamma_n)$.
We will fix these maps throughout the remainder of this section.

For $\delta > 0$, we define a chain complex
$$(C^n_*(\delta))_n \in \Chgeq \prod^\bd_{n \in \bbN} \overline{\calC}_{G_n}(\widetilde{V}_n \times \overline{\widetilde{\Gamma}}_n; \Ab)$$
as follows. Choose base points $v_n \in q_n^{-1}(1) \subseteq \widetilde{V}_n$. The $m$-th module $C^n_m(\delta) = (S_m^n, \pi_m^n, M_m^n)_n$ as an object in $\prod_{n \in \bbN}^\bd \overline{\calC}_{G_n}(\widetilde{V}_n \times \overline{\widetilde{\Gamma}}_n; \Ab)$ is given by
\begin{align*}
S_m^n &= \widetilde{V}_n \times S_m^{\sing, \delta}(\overline{\widetilde{\Gamma}}_n, d_{v_n}) \quad \quad\quad\quad (G_n\text{ acting only on } \widetilde{V}_n)\\
\pi_m^n &: (v, \sigma) \mapsto (v, q_n(v).\bary(\sigma)) \\
M_m^n &: (v, \sigma) \mapsto \bbZ.
\end{align*}
Note that $\pi$ is indeed $G_n$-equivariant. The differential $\partial: C_m^n(\delta) \rightarrow C_{m-1}^n(\delta)$ is given by
$$ \partial^{(v, \sigma)}_{(v', \sigma')} = \begin{cases}
\sum\limits_{\partial_i(\sigma) = \sigma'} (-1)^i & v = v' \vspace{1ex} \\
\quad 0 & \text{otherwise.}
\end{cases}$$
Note that differentials only have non-diagonal support in the $\overline{\widetilde{\Gamma}}_n$-direction. Similarly, using the chain complex $D_*^\delta$ from Lemma \ref{lem:finite_chain_complex}, we define a chain complex $(D_*^n(\delta))_n$ over $\widetilde{V}_n \times \overline{\widetilde{\Gamma}}_n$ via 
\begin{align*}
S_m^n &:= \widetilde{V}_n \times S_m^{\delta,n} \quad\quad\quad\quad\quad\quad\quad\quad\; (G_n\text{ acting only on } \widetilde{V}_n) \\
\pi_m^n &: (v, \sigma) \mapsto (v, q_n(v).\bary(\sigma)) \\
M_m^n &: (v, \sigma) \mapsto \bbZ
\end{align*}
in which $S^{\delta,n}_m$ is the set of $m$-simplices in $A^n$, the abstract simplicial complex we used to define $D_*^\delta$ for $\Gamma = \Gamma_n$.

\begin{lemma}[{Cf.\ \cite[Lemma 6.11]{Bartels.2007}}]
	\label{lem:control_on_homotopy_and_retract}
	Let $\delta > 0$ and $C > 1$. The chain complex $C^n_*(\delta)$ is a homotopy retract of the chain complex $D^n_*(\delta)$. The differentials of $C^n_*(\delta)$ and $D^n_*(\delta)$ as well as the maps and homotopies proving that $C^n_*(\delta)$ is a homotopy retract satisfy the following control condition:
	If $((v,\sigma), (v', \sigma'))$ lies in the support of one of these maps, then $v = v'$ and $d_C((v, \bary(\sigma)), (v', \bary(\sigma')) \leq 6C\delta$.
\end{lemma}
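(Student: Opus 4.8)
The plan is to obtain every map in the statement by ``extending'' the corresponding map of Lemma~\ref{lem:finite_chain_complex} so that it acts as the identity in the $\widetilde{V}_n$-coordinate, and then to translate the control it carries over $(\overline{\widetilde{\Gamma}}_n, d_{v_n})$ into control over $\big(\widetilde{V}_n \times \overline{\widetilde{\Gamma}}_n, d_C\big)$ through the twisted projection $\pi^n_m$.

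First, for each $n$ I apply Lemma~\ref{lem:finite_chain_complex} with $\overline{\Gamma} = (\overline{\widetilde{\Gamma}}_n, d_{v_n})$. This produces the finite chain complex $D_*^\delta = C_*(A^n)$ with $\delta$-controlled differentials, together with $6\delta$-controlled chain maps $i_n \colon C_*^{\sing,\delta}(\overline{\widetilde{\Gamma}}_n) \to D_*^\delta$ and $r_n$ the other way, and a $6\delta$-controlled chain homotopy $h_n \colon r_n \comp i_n \simeq \id$, where throughout ``controlled'' refers to $d_{v_n}$ and the barycentre map. For any morphism $\varphi = (\varphi^\sigma_{\sigma'})$ between such complexes of simplices I set $\widehat{\varphi}^{(v,\sigma)}_{(v',\sigma')} := \varphi^\sigma_{\sigma'}$ if $v = v'$ and $0$ otherwise. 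Applying $\widehat{(\,\cdot\,)}$ to the two differentials produces exactly the differentials of $C^n_*(\delta)$ and $D^n_*(\delta)$ as defined in the text, while $\widehat{i_n}$, $\widehat{r_n}$ and $\widehat{h_n}$ are then the maps exhibiting $C^n_*(\delta)$ as a homotopy retract of $D^n_*(\delta)$.

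The structural verifications are routine and I would keep them brief: $\widehat{\varphi}$ is $G_n$-equivariant because $G_n$ acts only on the $\widetilde{V}_n$-coordinate while $\widehat{\varphi}$ is diagonal there with entries independent of $v$; its rows and columns stay finite because only the $v = v'$ diagonal is nonzero and the rows and columns of $\varphi$ are finite; and the chain-complex, chain-map and chain-homotopy identities pass entrywise from the underlying simplex complexes, so in particular $\partial\widehat{h_n} + \widehat{h_n}\partial = \widehat{r_n}\comp\widehat{i_n} - \id$. The control bounds below are independent of $n$ for fixed $\delta$ and $C$, so all these maps indeed live in the bounded product.

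The real content is the control condition. That $v = v'$ on the support is immediate from the diagonal shape of $\widehat{\varphi}$ and of the differentials. For the $\overline{\widetilde{\Gamma}}_n$-direction, recall $\pi^n_m(v,\sigma) = (v, q_n(v).\bary(\sigma))$; thus, once $((v,\sigma),(v,\sigma'))$ lies in the support of one of our maps, Lemma~\ref{lem:finite_chain_complex} gives $d_{v_n}(\bary(\sigma),\bary(\sigma')) < 6\delta$, and it remains to bound $d_C\big((v,q_n(v).\bary(\sigma)),(v,q_n(v).\bary(\sigma'))\big)$. Using the $G_n$-invariance of $d_C$ under the diagonal action this distance equals $d_w(\bary(\sigma),\bary(\sigma'))$ with $w := q_n(v)^{-1}.v$, which lies in the same fibre $q_n^{-1}(1)$ as $v_n$; comparing $d_w$ with $d_{v_n}$ on that fibre via Remark~\ref{rem:equivariant_metric}, together with the factor $C$ built into $d_C$, then yields the bound $6C\delta$. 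This comparison --- transporting the $d_{v_n}$-control at the level of simplices into $d_C$-control on $\widetilde{V}_n \times \overline{\widetilde{\Gamma}}_n$ through the twisted projection --- is the only genuinely technical step, and I expect it to be the main obstacle; it is carried out exactly as in the proof of \cite[Lemma~6.11]{Bartels.2007}, the one change being that here $G_n$ acts on the metric space $\widetilde{V}_n$ instead of on itself.
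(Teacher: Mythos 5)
Your proposal matches the paper's proof in structure and is in fact considerably more explicit: the paper's own proof simply observes that $C^n_*(\delta)$ is the $G_n$-invariant extension of $C_*^{\sing,\delta}(\overline{\widetilde\Gamma}_n, d_{v_n})$ over $\{v_n\}\times\overline{\widetilde\Gamma}_n$, extends the maps and homotopies from Lemma~\ref{lem:finite_chain_complex} in the same way, and states that the support condition ``follows immediately from the definitions.'' Your diagonal-in-$v$ construction $\widehat{\varphi}^{(v,\sigma)}_{(v',\sigma')}$ is exactly that extension made precise (and is arguably cleaner than invoking equivariant extension from a single orbit, since $G_n$ does not act transitively on $\widetilde V_n$). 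The only place to be careful is the metric comparison you outline at the end: reducing to $d_w$ with $w = q_n(v)^{-1}v \in q_n^{-1}(1)$ is right, but invoking Remark~\ref{rem:equivariant_metric}.4 as you state it gives $d_w \leq d_{v_n} + 4\diam(\Gamma_n)$, which carries an additive $\diam(\Gamma_n)$-error not visible in the claimed bound $6C\delta$; the paper itself does not spell out how this is absorbed. So you have correctly isolated the one step the paper treats as ``immediate'' and flagged it as the technical content --- that is exactly the right instinct, and beyond that the two proofs coincide.
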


\begin{proof}
	Note that $C_*^n(\delta)$ is the unique $G_n$-invariant chain complex whose restriction to $\{v_n\} \times \overline{\widetilde{\Gamma}}_n \subseteq \widetilde{V}_n \times \overline{\widetilde{\Gamma}}_n$  coincides with $C_*^{\sing, \delta}(\overline{\widetilde{\Gamma}}_n, d_{v_n})$ when considered as a chain complex over $\{v_n\} \times \overline{\widetilde{\Gamma}}_n$ via the identification $\{v_n\} \times \overline{\widetilde{\Gamma}}_n \cong \overline{\widetilde{\Gamma}}_n$. Similarly, one can extend all the maps and homotopies from Lemma~\ref{lem:finite_chain_complex} to maps over $\widetilde{V}_n \times \overline{\widetilde{\Gamma}}_n$. The statement about the support of these maps follows immediately from the definitions.
\end{proof}

\begin{definition}[{Cf.\ \cite[Proposition 6.13]{Bartels.2007}}]
	\label{def:transfer}
	Choose a collection of numbers $\delta^\alpha(n)$, as in Lemma \ref{lem:existence_of_monotone_sequences} below.
	Depending on those choices, define
	$$\trans: \frac{\prod^\bd_n}{\bigoplus_n} \calO_{G_n}(\widetilde\Gamma_n; \calA) \rightarrow \Chhfd \bigg( \frac{\prod^\bd_n}{\bigoplus_n} \calO_{G_n}(\widetilde\Gamma_n \mid (V_n \times \overline{\widetilde\Gamma}_n, d_{C(n)}) ; \calA) \bigg),$$
	which does the following:
	
	On an object $A = (S_n, \pi_n, M_n)_n$, the functor is given by 
	$$(A \otimes C_*^n(\delta^1(n))_n \xrightarrow{\id \otimes \inc} (A \otimes C_*^n(\delta^2(n))_n \xrightarrow{\id \otimes \inc} (A \otimes C_*^n(\delta^3(n))_n \xrightarrow{\id \otimes \inc} \dots\text{,} $$
	where $(S_n, \pi_n, M_n)_n \otimes C_m^n(\delta^\alpha(n))$ is the object 
	$$(T_{n, m}, \rho_{n, m}, N_{n, m})_{n, m} \in \chhfd \bigg(\frac{\prod^\bd_n}{\bigoplus_n} \calO_{G_n}(\widetilde\Gamma_n \mid (V_n \times \overline{\widetilde\Gamma}_n, d_{C(n)}) ; \calA)\bigg)$$
	defined via
	\begin{align*}
	T_{n, m} &:= S_n \times_{q_n} \widetilde{V}_n \times S_m^{\sing, \delta(n)}(\overline{\widetilde{\Gamma}}_n, d_{v_n}) \\[-2ex]
	& \text{in which $G_n$ only acts on } S_n \times_{q_n} \widetilde{V}_n := \begin{cases}
	\big\{ (s,v) \where q_n(\pi_{\widetilde\Gamma_n, n}(s)) = q_n(v) \big\} & n \gg d \\
	\emptyset & \text{otherwise}
	\end{cases}\\
	\rho_{n, m} &: (s, v, \sigma) \mapsto 
	(\pi_{\widetilde\Gamma_n, n}(s), \;
	v, \;
	q_n(v).\bary(\sigma), \;
	\pi_{\bbN, n}(s))\\
	N_{n, m} &: (s, v, \sigma) \mapsto M_n(s) \otimes \bbZ 
	\end{align*}
	whose differentials are given by
	$$ \big(\id \otimes \partial(n)\big)^{(s, v, \sigma)}_{(s', v', \sigma')} = \begin{cases}
	\id_{M_n(s)} \otimes \partial(n)^{(v, \sigma)}_{(v', \sigma')} & s = s' \\ 
	0 & \text{otherwise.}
	\end{cases}$$
	A morphism $\varphi: A \rightarrow B$ is mapped to the morphism whose $n^\text{th}$ component is represented by
	\begin{center}
		\begin{tikzcd}
		A \otimes C^n_*(\delta^\alpha(n)) \arrow[r, "\id \otimes \inc"] \arrow[d, "\varphi_n \otimes {l(n)}"] & A \otimes C^n_*(\delta^{\alpha+1}(n)) \arrow[r, "\id \otimes \inc"] \arrow[d, "\varphi_n \otimes {l(n)}"] & A \otimes C^n_*(\delta^{\alpha+2}(n)) \arrow[r, "\id \otimes \inc"] \arrow[d, "\varphi_n \otimes {l(n)}"] & \dots \phantom{.} \\
		B \otimes C^n_*(\delta^{\alpha+1}(n)) \arrow[r, "\id \otimes \inc"] & B \otimes C^n_*(\delta^{\alpha+2}(n)) \arrow[r, "\id \otimes \inc"] & B \otimes C^n_*(\delta^{\alpha+3}(n)) \arrow[r, "\id \otimes \inc"] & \dots.
		\end{tikzcd}
	\end{center}
	Here, $\alpha = \alpha(\varphi) \in \bbN$ is chosen, for which $\varphi$ is $(\alpha - 4 \cdot \diam(\Gamma_n))$-controlled. The morphisms $\varphi_n \otimes l(n)$ are given by
	$$\big(\varphi_n \otimes l(n)\big)^{(s, v, \sigma)}_{(s', v', \sigma')} = \begin{cases}
	(\varphi_n)^s_{s'} \otimes l(n)_{q_n(v)^{-1} q_n(v')} & \bary(\sigma) = \bary(\sigma') \\
	0 & \text{otherwise,}
	\end{cases}$$
	where $l(n)_g : C^{\sing, \delta^\alpha(n)}(\overline{\widetilde{\Gamma}}_n, d_{v_n}) \rightarrow C^{\sing, \delta^{\alpha}(n)}(\overline{\widetilde{\Gamma}}_n, d_{gv_n}) \subseteq C^{\sing, \delta^{\alpha+1}(n)}(\overline{\widetilde{\Gamma}}_n, d_{v_n})$ is induced by left-multiplication of $g$ on $\overline{\widetilde{\Gamma}}_n$.
\end{definition}

\begin{proposition}[{Cf.\ \cite[Proposition 6.13]{Bartels.2007}}]
	The functor $\trans$ defined in Definition \ref{def:transfer} is well-defined.
\end{proposition}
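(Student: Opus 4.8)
The plan is to verify well-definedness in three steps: that $\trans$ sends objects to objects of $\Chhfd$ of the target category, that it sends morphisms to morphisms there, and that it is functorial and factors through the quotient by $\bigoplus_n$. Throughout I would follow the blueprint of \cite[Proposition 6.13]{Bartels.2007}; the genuinely new features are the passage to the bounded product and its quotient by $\bigoplus_n$, the uniformity in $n$ of every estimate, and the metrics $d_{C(n)}$.

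\emph{Objects.} Fix $A=(S_n,\pi_n,M_n)_n$. For each $\alpha$ and each degree $m$, one first checks that $A\otimes C_m^n(\delta^\alpha(n))$ is a legitimate object of $\prod^\bd_n\overline{\calO}_{G_n}(\widetilde\Gamma_n\mid V_n\times\overline{\widetilde\Gamma}_n;\calA)$; the only nontrivial point is local finiteness over $\widetilde\Gamma_n$, which holds because the fibres of $S_n\times_{q_n}\widetilde V_n\to S_n$ are translates of $q_n^{-1}(1)$, a finite set by bounded geometry of $\Gamma_n$. Tensoring $A$ with the homotopy retraction of Lemma \ref{lem:control_on_homotopy_and_retract} then exhibits $A\otimes C_*^n(\delta^\alpha(n))$ as a homotopy retract of the finite complex $A\otimes D_*^n(\delta^\alpha(n))$; since the differentials of that complex and all the retraction data keep the $\widetilde\Gamma_n$-coordinate fixed and are $6C(n)\delta^\alpha(n)$-controlled with respect to $d_{C(n)}$, choosing the sequences $(\delta^\alpha(n))$ as in Lemma \ref{lem:existence_of_monotone_sequences} makes this a finite complex in the target category, so $A\otimes C_*^n(\delta^\alpha(n))$ lies in $\chhfd$ of the target. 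Finally each $\id\otimes\inc$ is a degreewise split inclusion of a direct summand, hence a cofibration, and a chain homotopy equivalence, because the inclusion of the subcomplex on $\delta^\alpha(n)$-small singular simplices into that on $\delta^{\alpha+1}(n)$-small ones is a controlled homotopy equivalence (the standard small-simplices argument, cf.\ \cite[Lemma 6.7]{Bartels.2007}). Hence $\trans(A)$ is an object of $\Chhfd$ of the target.

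\emph{Morphisms.} Given $\varphi\colon A\to B$, choose a representative in the bounded product with all $\varphi_n$ uniformly controlled, take $\alpha=\alpha(\varphi)$ as prescribed, and form the ladder of the $\varphi_n\otimes l(n)$. Equivariance of each $\varphi_n\otimes l(n)$ follows from $g.\varphi^s_{s'}=\varphi^{gs}_{gs'}$ and $q_n(gv)^{-1}q_n(gv')=q_n(v)^{-1}q_n(v')$, and compatibility with the differentials and with the inclusions $\id\otimes\inc$ holds because $l(n)_g$ is induced by a continuous map and $g\mapsto l(n)_g$ respects composition; so, granting the control properties below, the ladder represents a morphism in $\Chhfd$ of the target whose class does not depend on the chosen shift. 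The substantive point is that the $\varphi_n\otimes l(n)$ are controlled uniformly in $n$: in the $\widetilde\Gamma_n$- and $\bbN$-directions, and for the convergence condition (\ref{eq:convergence}), this is inherited from $\varphi$ because $l(n)$ alters neither coordinate; in the $V_n\times\overline{\widetilde\Gamma}_n$-direction one estimates $d_{C(n)}$ between source and target of a non-zero entry by inserting an intermediate point and splitting off the $\widetilde V_n$- and $\overline{\widetilde\Gamma}_n$-contributions, the former handled by Remark \ref{rem:equivariant_metric} and the defining property of the $q_n$, the latter absorbed on passing from $C_*^{\sing,\delta^\alpha(n)}$ to $C_*^{\sing,\delta^{\alpha+1}(n)}$ --- which is exactly why the codomains of the $l(n)_g$ are shifted by one in $\alpha$ and why the $\delta^\alpha(n)$ are taken as in Lemma \ref{lem:existence_of_monotone_sequences}. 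For the finitely many $n$ below the threshold implicit in $S_n\times_{q_n}\widetilde V_n$ there is nothing to check.

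\emph{Functoriality and descent.} Identities are clearly preserved, and $\trans(\psi\comp\varphi)$ agrees with $\trans(\psi)\comp\trans(\varphi)$ after a shift of the ladders, using $l(n)_h\comp l(n)_g=l(n)_{hg}$ and additivity of the controlling constants --- this is where the slack built into the tilde-construction is used. If $\varphi$ is supported on only finitely many $n$, then so is the ladder $(\varphi_n\otimes l(n))_n$, so $\trans(\varphi)$ is the zero morphism of the target, whose morphism groups are those of $\frac{\prod^\bd_n}{\bigoplus_n}\calO_{G_n}(\dots)$; hence $\trans(\varphi)$ is independent of the representative of $[\varphi]$ and $\trans$ descends to $\frac{\prod^\bd_n}{\bigoplus_n}\calO_{G_n}(\widetilde\Gamma_n;\calA)$. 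I expect the uniform control estimate in the $V_n\times\overline{\widetilde\Gamma}_n$-direction to be the only real obstacle; the remaining verifications are routine, following \cite[Proposition 6.13]{Bartels.2007}.
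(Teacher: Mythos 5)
Your proposal follows the same roadmap as the paper's own proof (free $G_n$-action/equivariance, control of differentials via Lemma~\ref{lem:control_on_homotopy_and_retract} plus Lemma~\ref{lem:existence_of_monotone_sequences}, the homotopy retract onto $A\otimes D_*^n$, cofibration/weak equivalence of $\id\otimes\inc$, the $d_{C(n)}$-estimate for $\varphi_n\otimes l(n)$, and the well-definedness of $l(n)_g$), and it is essentially correct. Two small inaccuracies are worth flagging. First, the local-finiteness check you attach to $A\otimes C_m^n(\delta^\alpha(n))$ lying in $\prod^\bd_n\overline{\calO}_{G_n}(\dots)$ is a non-issue: the barred category $\overline{\calO}$ was defined precisely by dropping the local-finiteness requirement on objects, and indeed $A\otimes C_m^n$ is not locally finite (the singular simplex sets are infinite). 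The observation that the fibres of $S_n\times_{q_n}\widetilde V_n\to S_n$ are translates of the finite set $q_n^{-1}(1)$ is exactly what is needed, but it belongs to the later step, namely to show the finite retracting complex $A\otimes D_*^n(\delta^\alpha(n))$ (built from the finite simplex sets $S_m^{\delta,n}$) lies in the unbarred $\Chf\,\calO_{G_n}(\dots)$. Second, the $\overline{\widetilde\Gamma}_n$-contribution to $d_{C(n)}$ is not ``absorbed'' by passing from $\delta^\alpha$ to $\delta^{\alpha+1}$; rather, a non-zero entry forces $\bary(\sigma)=\bary(\sigma')$, so by Remark~\ref{rem:equivariant_metric}.3 that contribution simply vanishes and the estimate reduces entirely to $d_{\widetilde V_n}(v,v')$. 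The $\alpha$-shift serves a different purpose: it is what makes $l(n)_g\colon C^{\sing,\delta^\alpha(n)}(\overline{\widetilde\Gamma}_n,d_{v_n})\to C^{\sing,\delta^{\alpha+1}(n)}(\overline{\widetilde\Gamma}_n,d_{v_n})$ well-defined at all, via Lemma~\ref{lem:existence_of_monotone_sequences}.3. Neither point undermines the overall argument.
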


\begin{proof}
	In order to prove well-definedness we have to check several things.
	
	The object $(S_n, \pi_n, M_n)_n \otimes C_m(\delta(n))$ belongs to $\overline \calO_{G_n}(\widetilde\Gamma_n \mid (\widetilde{V}_n \times \overline{\widetilde{\Gamma}}_n, d_{C(n)}; \calA_n)$: The set $S_n \times_{q_n} S_m^{\sing, \delta(n)}(\overline{\widetilde{\Gamma}}_n, d_{v_n})$ is acted upon freely by $G_n$ because $G_n$ acts freely on $S_n$. All maps involved with this action are $G_n$-equivariant.
	
	The differentials $\id \otimes \partial(n)$ satisfy the boundedness condition that is requested for morphisms in the bounded product: This can be seen directly using Lemma \ref{lem:control_on_homotopy_and_retract} and Lemma \ref{lem:existence_of_monotone_sequences}.2. When going through the definitions, one can easily verify that the differentials are $G_n$-equivariant.
	Lemma \ref{lem:control_on_homotopy_and_retract} also shows that $(S_n, \pi_n, M_n)_n \otimes C_*(\delta(n))$ is homotopy equivalent to a finite chain complex over $\calO_{G_n}(\widetilde\Gamma_n \mid (\widetilde{V}_n \times \overline{\widetilde{\Gamma}}_n, d_{C(n)}; \calA_n)$ in such a way that everything satisfies the con\-trol conditions.
	
	Since $\delta^\alpha(n) \leq \delta^{\alpha + 1}(n)$, cf.\ Lemma \ref{lem:existence_of_monotone_sequences}.1, the map $C^n_*(\delta^\alpha(n)) \rightarrow C^n_*(\delta^{\alpha + 1}(n))$ is an inclusion which, degree-wise, is given by inclusions of direct summands. These inclusions are weak equivalences due to Lemma \ref{lem:existence_of_monotone_sequences}.1.
	Thus, $\id \otimes \inc$ is a cofibration and a weak equivalence.
	
	So far, this shows well-definedness on objects. The morphism $\varphi_n \otimes l(n)$ satisfies the boundedness condition more or less because $\varphi_n$ does: If $(\varphi_n)^s_{s'} \otimes l(n)_{q_n(v)^{-1}q_n(v')} \neq 0$ and $\bary(\sigma) = \bary(\sigma')$, then $d_{\widetilde\Gamma_n}(\pi_{\widetilde\Gamma_n, n}(s), \pi'_{\widetilde\Gamma_n, n}(s')) < \alpha - 4 \cdot \diam(\Gamma_n)$. Consequently, we have that $d_{\widetilde{V}_n}(v,v')$ is bounded by
	$$
	\!\underset{\leq 2 \cdot \diam(\Gamma_n)}{\underbrace{d_{\widetilde{V}_n}(v,r_n(\pi_{\widetilde\Gamma_n, n}(s)))}} + 
	\underset{< d (\alpha - 4 \cdot \diam(\Gamma_n))}{\underbrace{d_{\widetilde{V}_n}(r_n(\pi_{\widetilde\Gamma_n, n}(s)), r_n(\pi'_{\widetilde\Gamma_n, n}(s')))}} + 
	\underset{\leq 2 \cdot \diam(\Gamma_n)}{\underbrace{d_{\widetilde{V}_n}(r_n(\pi'_{\widetilde\Gamma_n, n}(s')), v')}}.$$
	Therefore,
	$$d_{C(n)}((v, \bary(\sigma)), (v', \bary(\sigma')) = d_{\widetilde{V}_n}(v,v') < d\alpha + (4-4d) \cdot \diam(\Gamma_n).$$
	The fact that all squares in the diagram commute is obvious. Again, going through the definitions, one can verify that $\varphi_n \otimes l(n)$ is $G_n$-equivariant. Finally, we verify that the left-multiplication of $g = q_n(v)^{-1}q_n(v')$ actually induces a map $C^{\sing, \delta^\alpha(n)}(\overline{\widetilde{\Gamma}}_n, d_{v_n}) \rightarrow C^{\sing, \delta^{\alpha+1}(n)}(\overline{\widetilde{\Gamma}}_n, d_{v_n})$. More precisely, the induced map is
	$$C^{\sing, \delta^\alpha(n)}(\overline{\widetilde{\Gamma}}_n, d_{v_n}) \rightarrow C^{\sing, \delta^\alpha(n)}(\overline{\widetilde{\Gamma}}_n, d_{g.v_n}) \rightarrow C^{\sing, \delta^{\alpha+1}(n)}(\overline{\widetilde{\Gamma}}_n, d_{v_n}).$$
	Here, the first map is indeed induced by left-multiplication with $g$ and the second map is the inclusion that is well-defined because of Lemma \ref{lem:existence_of_monotone_sequences}.3: Choose $w := q_n(v)^{-1}q_n(v').v_n$ and $w' := v_n$. Then
	\begin{align*}
	d_{\widetilde{V}_n}(w, w') 
	&= d_{\widetilde{V}_n}(q_n(v)^{-1}q_n(v').v_n, v_n) \\
	&= d_{\widetilde{V}_n}(q_n(v').v_n, q_n(v).v_n) \\
	&\leq \underset{\leq 2 \cdot \diam(\Gamma_n)}{\underbrace{d_{\widetilde{V}_n}(q_n(v').v_n,v')}} + \!\!\underset{< \alpha - 4\cdot \diam(\Gamma_n)}{\underbrace{d_{\widetilde{V}_n}(v',v)}}\!\! + \underset{\leq 2 \cdot \diam(\Gamma_n)}{\underbrace{d_{\widetilde{V}_n}(v,q_n(v).v_n)}}
	\;<\; \alpha.
	\end{align*}
	This means we can apply Lemma \ref{lem:existence_of_monotone_sequences}.3 to estimate the diameter of simplices w.r.t.\ the metric $d_{v_n}$ instead of $d_{g.v_n}$.
\end{proof}

\begin{lemma}[{Cf.\ \cite[Lemma 6.14]{Bartels.2007}}]
	\label{lem:existence_of_monotone_sequences}
	Let $C(n)$ be a monotone increasing sequence of numbers. There exists a collection of numbers $\delta^\alpha(n) > 0$ with $\alpha, n \in \bbN$ such that the following conditions are satisfied.
	\begin{enumerate}
		\item For every fixed $n \in \bbN$ the sequence $(\delta^\alpha(n))_{\alpha \in \bbN}$ is increasing, i.e.\
		$$ \delta^1(n) \leq \delta^2(n) \leq \delta^3(n) \leq \dots.$$
		\item For every $\alpha \in \bbN$ there exists $\varepsilon(\alpha) \geq 0$ such that
		$\delta^\alpha(n) \leq \frac{\varepsilon(\alpha)}{C(n)}$
		for all $n \in \bbN$.
		\item Consider $w, w' \in \widetilde V_n$, $\xi, \zeta \in \overline {\widetilde\Gamma}_n$ and $\alpha \in \bbN$. If $d_{\widetilde V_n}(w,w') \leq \alpha$ and $d_{w}(\xi, \zeta) \leq \delta^\alpha(n)$, then
		$d_{w'}(\xi, \zeta) \leq \delta^{\alpha + 1}(n)$.
	\end{enumerate}
\end{lemma}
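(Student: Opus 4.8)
The plan is to isolate condition~(3) as the only one carrying content: conditions~(1) and~(2) drop out by hand once the correct recursion relating $\delta^{\alpha+1}(n)$ to $\delta^\alpha(n)$ is in place. That recursion is governed by a single comparison between the metrics $d_w$ and $d_{w'}$ on $\overline{\widetilde\Gamma}_n$ for base points $w,w'$ at bounded $\widetilde V_n$-distance, and the feature we really need is that this comparison is \emph{uniform in $n$}.

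\textbf{Step 1: a uniform base-point comparison.} First I would prove that there is a constant $\lambda>1$, \emph{independent of $n$} (in fact $\lambda=e$), with
\[
  d_{w'}(\xi,\zeta)\ \le\ \lambda^{\,d_{\widetilde V_n}(w,w')}\,d_w(\xi,\zeta)
  \qquad\text{for all }w,w'\in\widetilde V_n,\ \xi,\zeta\in\overline{\widetilde\Gamma}_n.
\]
This is where the tree structure of $\widetilde\Gamma_n$ is decisive. Since $d_w$ is the visual metric $e^{-(\cdot,\cdot)_w}$ of Definition~\ref{def:boundary} (the same comparison constant works for any fixed positive rescaling of the $d_w$, as the scaling cancels from both sides), the inequality reduces to $\bigl|(\xi,\zeta)_w-(\xi,\zeta)_{w'}\bigr|\le d_{\widetilde V_n}(w,w')$; and in a tree both Gromov products equal the distance from the respective base point to the geodesic $[\xi,\zeta]$, so this is just the $1$-Lipschitz property of $x\mapsto d(x,[\xi,\zeta])$. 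Hence the exponent depends only on the combinatorial distance $d_{\widetilde V_n}(w,w')$ — never on $n$, on $\girth(\Gamma_n)$, or on $\diam(\Gamma_n)$. This sharp multiplicative bound must replace the naive additive bound $d_{w'}(\xi,\zeta)\le d_w(\xi,\zeta)+2\,d_{\widetilde V_n}(w,w')$ coming straight from the triangle inequality: combined with $C(n)\to\infty$ the additive bound is incompatible with~(2).

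\textbf{Step 2: the sequences and the verification.} Now choose any non-decreasing sequence of positive reals $\varepsilon(1)\le\varepsilon(2)\le\cdots$ with $\varepsilon(\alpha+1)\ge\lambda^{\alpha}\varepsilon(\alpha)$ for all $\alpha$ — for instance $\varepsilon(\alpha):=\lambda^{\binom{\alpha}{2}}$ — and put $\delta^\alpha(n):=\varepsilon(\alpha)/C(n)$. Condition~(2) holds by definition, and~(1) holds because $C(n)$ does not depend on $\alpha$ while $\varepsilon(\alpha)$ is non-decreasing. For condition~(3): if $d_{\widetilde V_n}(w,w')\le\alpha$ and $d_w(\xi,\zeta)\le\delta^\alpha(n)$, then Step~1 gives
\[
  d_{w'}(\xi,\zeta)\ \le\ \lambda^{\alpha}\,d_w(\xi,\zeta)\ \le\ \lambda^{\alpha}\frac{\varepsilon(\alpha)}{C(n)}\ \le\ \frac{\varepsilon(\alpha+1)}{C(n)}\ =\ \delta^{\alpha+1}(n).
\]

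\textbf{Main obstacle.} Essentially all of the difficulty sits in Step~1, namely in producing a comparison constant that does not degrade as $n\to\infty$. For a general (Gromov-)hyperbolic space one would only get a bound involving the hyperbolicity constant and the translation length of the group element relating the two base points, and that translation length grows with $\diam(\Gamma_n)$, which would wreck~(2); it is precisely because each $\widetilde\Gamma_n$ is an honest tree — so the hyperbolicity constant vanishes and a change of base point shifts Gromov products by exactly the combinatorial distance — that a uniform $\lambda$ is available. A minor subsidiary point, should the metrics on $\overline{\widetilde\Gamma}_n$ be taken in their unscaled controlled form rather than the visual form, is the reduction of small-diameter sets back to their visual part before applying the estimate; this is harmless, since~(3) is only ever invoked for the very small values $\delta^\alpha(n)$ constructed above.
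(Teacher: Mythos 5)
Your proposal rests on an incorrect identification of the metric $d_w$.  There is a notational clash in the paper: Definition~\ref{def:boundary} introduces a visual metric $d_v(\zeta,\xi)=e^{-(\zeta,\xi)_v}$, but Definition~\ref{def:equivariant_metric} then \emph{redefines} $d_v$ on $\overline{\widetilde\Gamma}_n$ as $d_v(\xi,\zeta):=d_C\bigl((v,\xi),(v,\zeta)\bigr)$, where $d_C$ is the chain--infimum metric built from $C=C(n)$, a reference metric $d_{\overline{\widetilde\Gamma}_n}$, and the equivariant projection $q_n\colon\widetilde V_n\to G_n$.  It is this second $d_w$, with $C=C(n)$, that appears in Lemma~\ref{lem:existence_of_monotone_sequences}: property~(2) refers to $C(n)$, the whole lemma exists to feed Definition~\ref{def:transfer} which is built over $(V_n\times\overline{\widetilde\Gamma}_n,d_{C(n)})$, and the paper's own proof points explicitly to Definition~\ref{def:equivariant_metric} and Remark~\ref{rem:equivariant_metric}.4.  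Your Step~1 therefore proves a comparison for the wrong object.

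The failure is not the ``minor subsidiary point'' your last paragraph suggests.  Even at small scales, where the chain infimum in $d_C$ collapses to a single step, one gets $d_v(\xi,\zeta)\approx C(n)\,d_{\overline{\widetilde\Gamma}_n}\bigl(q_n(v)^{-1}.\xi,\,q_n(v)^{-1}.\zeta\bigr)$, i.e.\ (when $d_{\overline{\widetilde\Gamma}_n}$ is a visual metric at $v_0$) a rescaled visual metric at base point $q_n(v).v_0$ --- \emph{not} at $v$.  For $w,w'$ with $d_{\widetilde V_n}(w,w')\le\alpha$ the relevant base-point change is $d\bigl(q_n(w).v_0,\,q_n(w').v_0\bigr)$, and since $q_n^{-1}(1)$ is only controlled up to diameter $2\,\diam(\Gamma_n)$, this can be as large as $\alpha+4\,\diam(\Gamma_n)$.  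So the multiplicative exponent you would extract is $\alpha+4\,\diam(\Gamma_n)$, which grows with $n$; there is no $n$-uniform $\lambda$, and your closed-form choice $\delta^\alpha(n)=\varepsilon(\alpha)/C(n)$ does not verify property~(3).  You correctly diagnose that the plain additive bound $d_{w'}\le d_w+2\,d_{\widetilde V_n}(w,w')$ cannot work under constraint~(2); but replacing it by a multiplicative bound does not dispose of the $\diam(\Gamma_n)$-dependence, which is precisely the error term the paper's proof emphasises.  The paper handles this by importing the argument of Bartels--L\"uck--Reich, defining a modulus-of-continuity function $\widetilde R_L(\delta)$ that explicitly carries the $2\,\diam(\Gamma_n)$ and $4\,\diam(\Gamma_n)$ corrections and then building the $\delta^\alpha(n)$ from it; a direct closed-form of the type you propose is not available.
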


\begin{proof}
	We can almost copy the proof of \cite[Lemma 6.14]{Bartels.2007} one-to-one.
	Note that instead of a metric $d_g$ for a $g \in G$, we here have the metric $d_w$ for a $w \in \widetilde V_n$.
	This is the reason why we always have to carry an error of $2 \cdot \diam(\Gamma_n)$ or $4 \cdot \diam(\Gamma_n)$, cf.\ Definition \ref{def:equivariant_metric} or Lemma \ref{rem:equivariant_metric}.4.
	If we set
	\begin{align*}
	\widetilde R_L(\delta) &:= \sup \left\{d_v(x, y) \,\bigg|
	\begin{tabular}{c}
	$(v, x, y) \in \widetilde{V}_n \times \overline {\widetilde{\Gamma}}_n \times \overline {\widetilde{\Gamma}}_n \text{ with } d_{\widetilde V_n}(v,v_n) \leq L + 2 \cdot \diam(\Gamma_n)$ \\
	$\text{ and } d_{v_n}(x, y) \leq \delta + 4 \cdot \diam(\Gamma_n)$
	\end{tabular} \right\}
	\end{align*}
	and remember the differences explained above, then we can follow the proof of \cite[Lemma 6.14]{Bartels.2007}.
\end{proof}

\begin{lemma}[{Cf.\ \cite[Lemma 6.16]{Bartels.2007}}]
	\label{lem:transfer_commutatve}
	The diagram
	\begin{center}
		\begin{tikzcd}
		\Chhfd \left( \frac{\prod^\bd_n}{\bigoplus_n} \calO_{G_n}(\widetilde\Gamma_n \mid (V_n \times \overline{\widetilde\Gamma}_n, d_{C(n)}) ; \calA) \right) \arrow[dr, "\Pr_*"] \\
		\frac{\prod^\bd_n}{\bigoplus_n} \calO_{G_n}(\widetilde\Gamma_n; \calA) \arrow[u, "\trans"] \arrow[r, "\inc"] & \Chhfd \left( \frac{\prod^\bd_n}{\bigoplus_n} \calO_{G_n}(\widetilde\Gamma_n; \calA)\right)
		\end{tikzcd}
	\end{center}
	is commutative after applying $K$-theory.
	(The induced diagram is the lower left triangle of the diagram on page \pageref{dia:proof_outline}.)
\end{lemma}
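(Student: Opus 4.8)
The plan is to imitate the proof of \cite[Lemma~6.16]{Bartels.2007}, performing the same adaptations that pervade this section: the group $G$ acting on itself is replaced by $G_n$ acting on $\widetilde V_n$, so that the relevant estimates acquire an error term of $2\cdot\diam(\Gamma_n)$ or $4\cdot\diam(\Gamma_n)$ (cf.\ Definition~\ref{def:equivariant_metric}, Remark~\ref{rem:equivariant_metric}.4 and Lemma~\ref{lem:existence_of_monotone_sequences}.3), and every construction must in addition be controlled in the sense of the bounded product $\prod^\bd_n$ and be compatible with the passage to the quotient by $\bigoplus_n$. It suffices to produce a natural transformation between the two exact functors $\Pr\comp\trans$ and $\inc$ that is an objectwise weak equivalence in the Waldhausen category $\Chhfd\bigl(\frac{\prod^\bd_n}{\bigoplus_n}\calO_{G_n}(\widetilde\Gamma_n;\calA)\bigr)$, since a natural weak equivalence between exact functors induces the same map on $K$-theory, so that the induced triangle commutes.

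First I would unwind $\Pr\comp\trans$. On an object $A=(S_n,\pi_n,M_n)_n$ it is the tilde-object
$$\Pr\bigl(A\otimes C^n_*(\delta^1(n))\bigr)_n \xrightarrow{\id\otimes\inc} \Pr\bigl(A\otimes C^n_*(\delta^2(n))\bigr)_n \xrightarrow{\id\otimes\inc} \cdots,$$
where $\Pr$ forgets the auxiliary coordinate over $(\widetilde V_n\times\overline{\widetilde\Gamma}_n,d_{C(n)})$. Because the differentials of $C^n_*(\delta)$ only involve $s=s'$ and $v=v'$, they are diagonal in the $\widetilde\Gamma_n$- and $\bbN$-directions, so each term is, fibrewise over $S_n$, assembled from the singular chain complex $C_*^{\sing,\delta^\alpha(n)}(\overline{\widetilde\Gamma}_n,d_{v_n})$. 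By Lemma~\ref{lem:control_on_homotopy_and_retract} it is a homotopy retract of the finite complex $\Pr\bigl(A\otimes D^n_*(\delta^\alpha(n))\bigr)_n$, hence an object of $\chhfd$, and the transition maps $\id\otimes\inc$ are simultaneously cofibrations and weak equivalences by the squeezing of Lemma~\ref{lem:existence_of_monotone_sequences}. Thus $\Pr\comp\trans(A)$ is a genuine object of $\Chhfd$ (the tilde-construction applied to $\chhfd$), functorial in $A$.

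The natural transformation itself is induced by augmentation. The geometric input is that $\overline{\widetilde\Gamma}_n$ — the tree $\widetilde\Gamma_n$ together with its boundary at infinity (Definition~\ref{def:boundary}) — is contractible; one may contract it to the base vertex $v_n$ by flowing along geodesics. Hence $C_*^{\sing,\delta}(\overline{\widetilde\Gamma}_n,d_{v_n})$ has the homology of a point (small simplices suffice to compute singular homology, as in Lemma~\ref{lem:finite_chain_complex}), so the augmentation $\varepsilon\colon C_*^{\sing,\delta}(\overline{\widetilde\Gamma}_n,d_{v_n})\to\bbZ$ is a quasi-isomorphism and, being a map of bounded-below complexes of free abelian groups, a chain homotopy equivalence; moreover $\varepsilon$ is compatible with the inclusions $C_*^{\sing,\delta^\alpha}\hookrightarrow C_*^{\sing,\delta^{\alpha+1}}$. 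Tensoring with $A$ and applying $\Pr$, the maps $\id\otimes\varepsilon$ assemble into a morphism of tilde-objects $\Pr\comp\trans(A)\to\inc(A)$, natural in $A$. The contracting homotopy of $\overline{\widetilde\Gamma}_n$ need not be uniformly bounded in the $\overline{\widetilde\Gamma}_n$-direction, but this is immaterial: $\Pr$ has deleted that direction, every map involved is diagonal in the $\widetilde\Gamma_n$- and $\bbN$-directions and moves nothing there, so $\id\otimes\varepsilon$ is an objectwise weak equivalence.

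I expect the main obstacle to be the control bookkeeping — exactly as in \cite[Lemma~6.16]{Bartels.2007} and \cite[Section~8.2]{Bartels.2005} — of checking that $\id\otimes\varepsilon$, and the homotopies witnessing that it is a weak equivalence, are genuine data in the relevant controlled categories: controlled \emph{uniformly in $n$, up to the passage to $\bigoplus_n$}, and compatible with the filtration by $\alpha$ (possibly after a shift inside the tilde-category). This is where the monotone sequences $\delta^\alpha(n)$ of Lemma~\ref{lem:existence_of_monotone_sequences} and the $2\cdot\diam(\Gamma_n)$/$4\cdot\diam(\Gamma_n)$ error terms re-enter, in the same way as in the proof that $\trans$ is well-defined. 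Once this is settled, the objectwise weak equivalence yields $\Pr_*\comp\trans_*=\inc_*$ on $K$-theory, which is the assertion.
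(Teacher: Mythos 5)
Your proposal is correct and follows essentially the same route as the paper: both arguments produce a natural weak equivalence from $\Pr\comp\trans$ to $\inc$ by exploiting contractibility of $\overline{\widetilde\Gamma}_n$ together with the fact that after applying $\Pr$ the auxiliary $(\widetilde V_n\times\overline{\widetilde\Gamma}_n)$-coordinate is gone, so the contracting homotopy only needs to be controlled in the $\widetilde\Gamma_n$- and $\bbN$-directions, where it is diagonal. The paper merely factors your augmentation $\varepsilon$ through a two-stage diagram $A\otimes C_*^{\sing,\delta^\alpha}(\overline{\widetilde\Gamma}_n)\to A\otimes C_*^{\sing}(\overline{\widetilde\Gamma}_n)\to A\otimes C_*^{\sing}(\pt)$, separating the small-simplices quasi-isomorphism from the projection to a point, which is a presentational rather than substantive difference.
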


\begin{proof}
	We show that $\Pr_* \comp \trans$ and $\inc$ are naturally weak equivalent functors. The transfer is implemented by tensoring with a chain complex in a controlled way. Given a sequence of chain complexes $C_*(n) \in \Ch_\geq(\calA_n)$ with $G_n$-action we can define
	
	$$- \otimes C_*(n): \frac{\prod_n^\bd}{\bigoplus_n} \calO_{G_n}(\widetilde\Gamma_n; \calA_n) \rightarrow \chhfd \bigg( \frac{\prod_n^\bd}{\bigoplus_n} \calO_{G_n}(\widetilde\Gamma_n; \calA_n) \bigg),$$
	which does the following:
	
	On an object $A = (S_n, \pi_n, M_n)_n$, the functor is given by 
	$$A \otimes C_*(n),$$
	where $(S_n, \pi_n, M_n)_n \otimes C_m(n)$ is an object $(T_{n,m}, \rho_{n,m}, N_{n,m})_{n,m} \in \calO_{G_n}(\widetilde\Gamma_n; \calA_n) \times \bbN$ defined via
	\begin{align*}
	T_{n,m} &:= S_n \times C_m(n) \\
	\rho_{n,m} &:= \pi_n \comp \pr_{S_n} \\
	N_{n,m} &: (s, \sigma) \mapsto M_n(s) \otimes \bbZ = M_n(s)
	\end{align*}
	and the differentials are given by
	$$ \big(\id \otimes \partial(n)\big)^{(s, \sigma)}_{(s', \sigma')} = \begin{cases}
	\id_{M_n(s)} \otimes \partial(n)^{\sigma}_{\sigma'} & s = s' \\ 
	0 & \text{otherwise.}
	\end{cases}$$
	A morphism $\varphi: A \rightarrow B$ is mapped to the morphism $(\varphi_n \otimes l(n))_n$ given by
	$$\big(\varphi_n \otimes l(n)\big)^{(s, \sigma)}_{(s', \sigma')} = \begin{cases}
	\varphi^s_{s'} \otimes l(n)_{q_n(v)^{-1} q_n(v')} & \sigma = \sigma', \;\; v := \pi_{\widetilde\Gamma_n, n}(s), v := \pi'_{\widetilde\Gamma_n, n}(s') \\
	0 & \text{otherwise}
	\end{cases}$$
	in which $l(n)_g : C_*(n) \rightarrow C_*(n)$ is given by left-multiplication of $g$.
	
	Now, for $G_n$-chain complexes $C_*(n)$ and $D_*(n)$, (non-equivariant) chain maps $f_*(n) : C_*(n) \rightarrow D_*(n)$, and an object $A \in \frac{\prod_n^\bd}{\bigoplus_n} \calO_{G_n}(\widetilde\Gamma_n; \calA_n)$, we have an induced (non-equivariant) morphism
	$$ \id_A \otimes f_*(n): A \otimes C_*(n) \rightarrow A \otimes D_*(n). $$
	If $f_*(n)$ is $G_n$-equivariant, then so is the induced morphism and this defines a natural transformation
	$$- \otimes f_*(n) : - \otimes C_*(n) \Rightarrow - \otimes D_*(n).$$
	If $f_*(n)$ is a (non-equivariant) chain homotopy equivalence, then $\id_A \otimes f_*(n)$ is a chain homotopy equivalence in $\chhfd \big( \frac{\prod_n^\bd}{\bigoplus_n} \calO_{G_n}(\widetilde\Gamma_n; \calA_n) \big)$: A homotopy inverse for $\id_A \otimes f_*(n)$ is given by $\id_A \otimes g_*(n)$, where $g_*(n)$ is a homotopy inverse for $f_*(n)$.
	
	Regard the following diagram.
	\begin{center}
		\begin{tikzcd}
		A \otimes C_*^{\sing, \delta^1(n)}(\overline{\widetilde{\Gamma}}_n) \arrow[d, "\id \otimes \inc"] \arrow[r, "\id \otimes \inc"] & A \otimes C_*^{\sing, \delta^2(n)}(\overline{\widetilde{\Gamma}}_n) \arrow[d, "\id \otimes \inc"] \arrow[r, "\id \otimes \inc"] & A \otimes C_*^{\sing, \delta^3(n)}(\overline{\widetilde{\Gamma}}_n) \arrow[d, "\id \otimes \inc"] \arrow[r, "\id \otimes \inc"] & \dots \\
		A \otimes C_*^{\sing}(\overline{\widetilde{\Gamma}}_n) \arrow[d, "\id \otimes \pr_*"] \arrow[r, "\id \otimes \inc"] & A \otimes C_*^{\sing}(\overline{\widetilde{\Gamma}}_n) \arrow[d, "\id \otimes \pr_*"] \arrow[r, "\id \otimes \inc"] & A \otimes C_*^{\sing}(\overline{\widetilde{\Gamma}}_n) \arrow[d, "\id \otimes \pr_*"] \arrow[r, "\id \otimes \inc"] & \dots \\
		A \otimes C_*^{\sing}(\pt) \arrow[r, "\id \otimes \inc"] & A \otimes C_*^{\sing}(\pt) \arrow[r, "\id \otimes \inc"] & A \otimes C_*^{\sing}(\pt) \arrow[r, "\id \otimes \inc"] & \dots
		\end{tikzcd}
	\end{center}
	Using the arguments above, each map in the diagram is a chain homotopy equivalence in $\chhfd \big( \frac{\prod_n^\bd}{\bigoplus_n} \calO_{G_n}(\widetilde\Gamma_n; \calA_n) \big)$. Note that $\pr : \overline{\widetilde{\Gamma}}_n \rightarrow \pt$ is a homotopy equivalence because $\overline{\widetilde{\Gamma}}_n$ is contractible.
	
	By going through the definitions one can easily check that the upper row is exactly given by $\Pr_* \comp \trans$ and the lower row is simply $A \xrightarrow{\id} A \xrightarrow{\id} A \xrightarrow{\id} \dots$. In other words, the lower row is given by the inclusion. The above construction yields the natural transformation between $\Pr_* \comp \trans$ and $\inc$, which is a weak equivalence on each object.
\end{proof}
	\section{Trivial $K$-Theory (Step 4)}

\subsection{Excision}

\begin{definition}
	\label{def:excisive_square}
	Let $\calC$ be an additive category with full subcategories $\calA, \calB, \calV \subseteq \calC$ such that $\calV \subseteq \calA$ and $\calV \subseteq \calB$. We call the diagram of inclusions
	\begin{center}
		\begin{tikzcd}
		\calV \arrow[r] \arrow[d] & \calA \arrow[d] \\
		\calB \arrow[r] & \calC
		\end{tikzcd}
	\end{center}
	an \emph{excisive square} if
	\begin{enumerate}
		\item for all $X \in \calC$ there are $A \in \calA$, $B \in \calB$ such that $X$ is a direct sum of $A$ and $B$, and
		\item if a morphism $f \in \calA$ factors through an object of $\calB$, then $f$ factors through an object of $\calV$.
	\end{enumerate}
\end{definition}

\begin{lemma}
	\label{lem:excisive_square_induces_isomorphism}
	Given an excisive square
	\begin{center}
		\begin{tikzcd}
		\calV \arrow[r] \arrow[d] & \calA \arrow[d] \\
		\calB \arrow[r] & \calC
		\end{tikzcd}
	\end{center}
	the canonical functor $\bigslant{\calB}{\calV} \rightarrow \bigslant{\calC}{\calA}$ is an equivalence of categories.
\end{lemma}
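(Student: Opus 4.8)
The plan is to check that the functor $\Phi\colon \calB/\calV \to \calC/\calA$ induced by the inclusion $\calB \hookrightarrow \calC$ is well defined, and then to show it is an equivalence by verifying it is essentially surjective, full and faithful. It is well defined because a morphism of $\calB$ that factors through an object of $\calV \subseteq \calA$ in particular factors through an object of $\calA$, hence becomes $0$ in $\calC/\calA$.

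Essential surjectivity and fullness are the easy halves. Given $X \in \calC$, the first condition of an excisive square gives $A \in \calA$, $B \in \calB$ with $X \cong A \oplus B$; since $\id_A$ factors through $A \in \calA$, the object $A$ is a zero object of $\calC/\calA$, so $X \cong B = \Phi(B)$ in $\calC/\calA$. Fullness is immediate from $\calB$ being a \emph{full} subcategory of $\calC$: for $B, B' \in \calB$ one has $\Hom_\calC(B, B') = \Hom_\calB(B, B')$, so every morphism $B \to B'$ in $\calC/\calA$ is the image of its own class in $\calB/\calV$.

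The substance of the lemma is faithfulness, i.e.\ that a morphism $f\colon B \to B'$ of $\calB$ which, regarded in $\calC$, factors through some $A \in \calA$ must already factor through an object of $\calV$. First I would reduce to the case of an endomorphism: passing to the biproduct $B \oplus B'$ and replacing $f$ by the endomorphism of $B \oplus B'$ with single nonzero matrix entry $f$ (which still factors through $A$, and a factorization of which through $\calV$ gives one of $f$ after composing with the structure maps of the biproduct), one may assume $f = v \circ u$ with $u\colon B \to A$, $v\colon A \to B$, $A \in \calA$. Now split $A \cong A_0 \oplus B_0$ with $A_0 \in \calA$, $B_0 \in \calB$ via the first condition; decomposing $u = (u_0, u_1)$, $v = (v_0, v_1)$ along this splitting gives $f = v_0 u_0 + v_1 u_1$ with $v_0 u_0$ factoring through $A_0$ and $v_1 u_1$ through $B_0$. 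The summand $v_1 u_1$ factors through $\calV$: being a retract of $A \in \calA$, the object $B_0$ lies in $\calA$ (the full additive subcategories in play are closed under retracts), hence in $\calA \cap \calB$, so $\id_{B_0}$ is a morphism of $\calA$ factoring through an object of $\calB$, and the second condition of an excisive square makes it factor through $\calV$; thus $v_1 u_1 = v_1 \circ \id_{B_0} \circ u_1$ does too. For the summand $v_0 u_0$, note that $u_0 \circ v_0\colon A_0 \to A_0$ is a morphism of $\calA$ that factors through $B \in \calB$, so the second condition again relocates the factoring object into $\calV$; substituting this back into $v_0 u_0$ — together, if necessary, with a further application of the first condition to $A_0$ to absorb its part in $\calA \cap \calB$ — exhibits $v_0 u_0$ as a map factoring through $\calV$. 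Adding the two pieces and unwinding the reduction yields the desired factorization of $f$.

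I expect this faithfulness step to be the real obstacle. The delicate point is that the second condition of an excisive square a priori only relocates the factoring object of an \emph{endomorphism} of an $\calA$-object; turning this into a genuine factorization of $f$ itself through $\calV$ — and not merely of a power or a conjugate of $f$ — is precisely what the bookkeeping with the decomposition $A \cong A_0 \oplus B_0$ from the first condition is there to control. Organizing that bookkeeping correctly, in particular checking that the pieces one discards really do factor through $\calV$, is where the work lies; by contrast, well-definedness, essential surjectivity and fullness are each a one-line consequence of the two excisive-square conditions and the fullness of $\calB \subseteq \calC$.
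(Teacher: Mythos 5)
Your reduction is right that faithfulness is the whole content, and your observations about essential surjectivity and fullness are correct and match the paper's (terse) argument. But the faithfulness argument does not close, and indeed cannot: the statement as printed is \emph{false}. The failing step is the one you flag yourself. From $u_0 \circ v_0 \colon A_0 \to A_0$ factoring through $\calV$, say $u_0 v_0 = j p$ with $j,p$ through some $V \in \calV$, one gets only $(v_0 u_0)^2 = v_0 (u_0 v_0) u_0 = v_0 j p u_0$ factoring through $V$; there is no identity that turns a factorization of $u_0 v_0$ into one of $v_0 u_0$, and no amount of further applications of condition~1 repairs this (condition~1 only guarantees \emph{some} decomposition of $A_0$, and the trivial one $A_0 \cong A_0 \oplus 0$ already satisfies it, giving you nothing). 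A separate issue is your appeal to closure under retracts for the full subcategories, which is not part of Definition~\ref{def:excisive_square}.

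Here is a concrete counterexample to the printed statement. Take $\calC$ to be finitely generated $\bbZ/p^2$-modules, $\calA$ the full subcategory of direct sums of $\bbZ/p$, $\calB$ the full subcategory of direct sums of $\bbZ/p^2$, and $\calV = \{0\}$. Condition~1 is the structure theorem. Any morphism of $\calA$ that factors through $\calB$ is zero, because the composite $\bbZ/p \to \bbZ/p^2 \to \bbZ/p$ is always zero (the first map has image $p\bbZ/p^2$, which dies under the second); so condition~2 holds. Yet $p \cdot \id_{\bbZ/p^2}$ is a nonzero endomorphism in $\calB$ that factors as $\bbZ/p^2 \twoheadrightarrow \bbZ/p \hookrightarrow \bbZ/p^2$ through $\calA$, so it is nonzero in $\calB/\calV$ but zero in $\calC/\calA$, and $\calB/\calV \to \calC/\calA$ is not faithful.

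What is actually true, and what the paper uses, is that the \emph{other} comparison functor $\calA/\calV \to \calC/\calB$ is an equivalence. With that orientation, condition~2 of Definition~\ref{def:excisive_square} is verbatim the faithfulness statement, essential surjectivity and fullness go exactly as you argued (with the roles of $\calA$ and $\calB$ swapped), and the paper's one-line proof is complete. This is also the direction Corollary~\ref{cor:mayer_vietoris_sequence} invokes: the excision isomorphism there identifies the quotient of $\calO_{G_n}(X_n \mid A_n)$ by $\calO_{G_n}(X_n \mid A_n \cap B_n)$ (i.e.\ $\calA/\calV$) with the quotient of $\calO_{G_n}(X_n \mid A_n \cup B_n)$ by $\calO_{G_n}(X_n \mid B_n)$ (i.e.\ $\calC/\calB$). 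So the lemma statement you were given has $\calA$ and $\calB$ transposed in the codomain; once that is corrected, the substantive step you were wrestling with evaporates.
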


\begin{proof}
	The functor $\bigslant{\calB}{\calV} \rightarrow \bigslant{\calC}{\calA}$ is full because the inclusions are full.
	It is faithful because of the second property of excisive squares.
	It is essentially surjective because of the first property of excisive squares.
\end{proof}

\begin{lemma}
	\label{lem:controlled_categories_yield_excisive_squares}
	Let $X$ and $Y$ be metric spaces with an isometric $G$-action and $G$-invariant subspaces $A, B \subseteq Y$.
	Let $R > 0$ such that
	$$d(A \smallsetminus B,B \smallsetminus A) > R. \vspace{-1ex}$$
	Then \vspace{-1ex}
	\begin{center}
		\begin{tikzcd}[row sep=1.5em, column sep=1.5em]
		\calO_G(X \mid A \cap B; \calA) \arrow[r] \arrow[d]  & \calO_G(X \mid A; \calA) \arrow[d] \\
		\calO_G(X \mid B; \calA) \arrow[r] & \calO_G(X \mid A \cup B; \calA)
		\end{tikzcd}
	\end{center}
	satisfies the following properties:
	\begin{enumerate}
		\item Every object in $\calO_G(X \mid A \cup B; \calA)$ is the direct sum of objects in $\calO_G(X \mid A; \calA)$ and $\calO_G(X \mid B; \calA)$ whose inclusions and projections are 0-controlled.
		\item Given $0 < \varepsilon < R$, if a morphism $\varphi$ of $\calO_{G}(X \mid A \cup B \calA)$ that is $\varepsilon$-controlled in $Y$-direction factors through an object of $\calO_G(X \mid B; \calA)$ via morphisms that are $\varepsilon'$-controlled in $Y$-direction, where $0 < \varepsilon' < R$, then $f$ factors through an object of $\calO_G(X \mid A \cap B; \calA)$ via morphisms that are $\varepsilon'$-controlled in $Y$-direction.
	\end{enumerate}
\end{lemma}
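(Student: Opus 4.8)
The plan is to treat the two assertions in turn; the first is a direct unwinding of the definitions, while the metric hypothesis $d_Y(A\smallsetminus B,B\smallsetminus A)>R$ only enters in the second. For Property~1, let $(S,\pi,M)$ be an object of $\calO_G(X\mid A\cup B;\calA)$ and let $\pi_Y\colon S\to A\cup B$ be its $Y$-component. Since $A$ and $B$ are $G$-invariant, so are the disjoint subsets $S_A:=\pi_Y^{-1}(A)$ and $S_B:=\pi_Y^{-1}(B\smallsetminus A)$, whose union is $S$; restricting $\pi$ and $M$ yields objects $(S_A,\pi|_{S_A},M|_{S_A})$ of $\calO_G(X\mid A;\calA)$ and $(S_B,\pi|_{S_B},M|_{S_B})$ of $\calO_G(X\mid B;\calA)$. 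All the defining conditions — freeness of the $G$-set, cofiniteness of preimages of cocompact sets, and the compact-support condition in the $X$-direction — pass to $G$-invariant subsets, so these are genuine objects, and the partial-identity inclusions $S_A,S_B\hookrightarrow S$ together with the corresponding projections exhibit $(S,\pi,M)$ as their direct sum; these maps move no point and are hence $0$-controlled.

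For Property~2, write the given $\varepsilon$-controlled morphism $\varphi\colon U\to U'$ between objects of $\calO_G(X\mid A;\calA)$ as $U\xrightarrow{\beta}W\xrightarrow{\gamma}U'$ with $W=(T,\rho,N)$ an object of $\calO_G(X\mid B;\calA)$ and $\beta,\gamma$ $\varepsilon'$-controlled in the $Y$-direction. As in Property~1, split $T$ along $\rho_Y\colon T\to B$ into the $G$-invariant pieces $T_1:=\rho_Y^{-1}(A\cap B)$ and $T_2:=\rho_Y^{-1}(B\smallsetminus A)$, giving $W=W_1\oplus W_2$ with $W_1$ an object over $A\cap B$ and $W_2$ an object over $B\smallsetminus A$. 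Expanding the matrix product along $T=T_1\sqcup T_2$ splits $\varphi=\varphi_1+\varphi_2$ with $\varphi_1:=\gamma|_{T_1}\comp\beta|^{T_1}$ and $\varphi_2:=\gamma|_{T_2}\comp\beta|^{T_2}$, and $\varphi_1$ already factors through $W_1\in\calO_G(X\mid A\cap B;\calA)$ via the $\varepsilon'$-controlled restrictions $\beta|^{T_1}$ and $\gamma|_{T_1}$.

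The crux is that $\varphi_2$ also factors through an object over $A\cap B$, and this is where the separation hypothesis is used. If $(\varphi_2)^s_{s'}\neq 0$ there is $t\in T_2$ with $\beta^s_t\neq 0$ and $\gamma^t_{s'}\neq 0$, so $d_Y(\pi_Y(s),\rho_Y(t))<\varepsilon'$ with $\pi_Y(s)\in A$ and $\rho_Y(t)\in B\smallsetminus A$; were $\pi_Y(s)$ in $A\smallsetminus B$ this would give $d_Y(A\smallsetminus B,B\smallsetminus A)<\varepsilon'<R$, against the hypothesis, so $\pi_Y(s)\in A\cap B$, and symmetrically $\pi'_Y(s')\in A\cap B$. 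Thus $\varphi_2$ is supported on matrix entries between basis elements already lying over $A\cap B$, hence factors as $U\xrightarrow{\mathrm{pr}}U|_{A\cap B}\to U'$ through the restriction $U|_{A\cap B}\in\calO_G(X\mid A\cap B;\calA)$ of $U$ to the part of its $G$-set over $A\cap B$, with $\mathrm{pr}$ the $0$-controlled projection and the second arrow $\varphi_2$ viewed as a morphism out of $U|_{A\cap B}$. Combining, $\varphi=\varphi_1+\varphi_2$ factors through $W_1\oplus U|_{A\cap B}\in\calO_G(X\mid A\cap B;\calA)$ via $(\beta|^{T_1},\mathrm{pr})$ and $(\gamma|_{T_1},\varphi_2)$, whose $Y$-controls are bounded in terms of $\varepsilon'$.

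I expect the main obstacle to be twofold. Conceptually, everything hinges on the estimate in the previous paragraph: the separation of $A\smallsetminus B$ from $B\smallsetminus A$ beyond $R$, together with $\varepsilon'<R$, is exactly what prevents the $B\smallsetminus A$-part of the middle object from linking anything whose $Y$-support is not already in $A\cap B$ — this is the single place the hypothesis on $R$ enters, and the reason excision holds only ``up to $R$''. Technically, the work lies in the bookkeeping: verifying that each restriction and decomposition produces an honest, $G$-equivariant object and morphism of the correct controlled category (all inherited from $G$-invariant subsets and hence routine), and tracking the $Y$-control constants carefully through the compositions of restricted maps so as to land on the bound asserted in the statement rather than a harmless multiple of it.
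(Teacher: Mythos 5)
Your proof of Property~1 is essentially identical to the paper's: split $S$ into the fibres over $A$ and over $B\smallsetminus A$, and note that the partial identities are $0$-controlled.

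For Property~2 you take a genuinely different, in fact dual, decomposition. The paper splits the \emph{morphism} $\varphi$ into four pieces $\varphi_{00},\varphi_{01},\varphi_{10},\varphi_{11}$ according to whether the source and target module labels sit over $A\smallsetminus B$ or over $A\cap B$; three of the pieces factor through $A\cap B$ for free, and the separation hypothesis is invoked only for $\varphi_{00}$ to force the \emph{middle} object $T'$ into $A\cap B$. You instead split the \emph{middle} object $W$ into the parts $W_1$ over $A\cap B$ and $W_2$ over $B\smallsetminus A$, and then invoke the separation hypothesis to force the \emph{source and target} supports of $\varphi_2=\gamma|_{T_2}\comp\beta|^{T_2}$ into $A\cap B$. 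Both arguments hinge on the same triangle-inequality-versus-separation estimate, just applied to different ends of the chain $s\to t\to s'$. Your version has the virtue of fewer cases (two instead of four), and it correctly factors $\varphi$ through $W_1\oplus U|_{A\cap B}$.

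One caveat on constants, which you already hint at in your last paragraph: your $\varphi_1$ and $\varphi_2$ are \emph{not} disjointly supported summands of the $\varepsilon$-controlled morphism $\varphi$, so they are not themselves $\varepsilon$-controlled; you can only bound $\widetilde\varphi_2:U|_{A\cap B}\to U'$ by the triangle inequality through $t\in T_2$, giving $2\varepsilon'$-control rather than the $\varepsilon'$ claimed in the lemma. (The paper's $\varphi_{ij}$ \emph{are} disjointly supported in $\varphi$ and hence $\varepsilon$-controlled, but its own proof also mixes $\varepsilon$- and $\varepsilon'$-controls in the final factorization.) For the downstream use in Lemma~\ref{lem:important_excisive_square} only a uniform bound in terms of $\varepsilon$ and $\varepsilon'$ is needed, since the controlled product only requires the $Y$-controls to tend to $0$ as $n\to\infty$; so the factor of two is harmless, but if you want to state the conclusion with the literal constant $\varepsilon'$ you should either weaken it to ``bounded in terms of $\varepsilon,\varepsilon'$'' (as you do) or switch to the paper's disjoint-support decomposition.
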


\begin{proof}
	\emph{Ad 1.} Set $S_A := \pi_{A \cup B}^{-1}(A)$ and $S_B := S \smallsetminus S_A$.
	Then we have $(S_A, \pi|_{S_A}, M|_{S_A}) \in \calO_G(X \mid A; \calA)$ and $(S_B, \pi|_{S_B}, M|_{S_B}) \in \calO_G(X \mid B; \calA)$.
	Clearly, $(S, \pi, M)$ is a direct sum of both of these objects whose inclusion and projection morphisms are 0-controlled.
	
	\vspace{1ex}
	\emph{Ad 2.} Let $0 < \varepsilon < R$ and $\varphi : (S, \pi, M) \rightarrow (S', \pi', M')$ be an $\varepsilon$-controlled morphism of $\calO_{G}(X \mid A; \calA)$. (In this proof we always implicitly mean \emph{controlled in $Y$-direction} whenever we say \emph{controlled}.)
	Set $A_0 := A \smallsetminus B$ and $A_1 := A \cap B$.
	Now $\varphi$ is the sum of $\varphi_{00}$, $\varphi_{01}$, $\varphi_{10}$ and $\varphi_{11}$, where
	$$(\varphi_{ij})^s_{s'} = \begin{cases}
	\varphi^s_{s'} & \pi_{A}(s) \in A_i \text{ and } \pi'_{A}(s') \in A_j \\
	0 & \text{otherwise.}
	\end{cases}$$
	Since $\varphi_{01}$, $\varphi_{10}$ and $\varphi_{11}$ factor through their domain and target, they factor through an object in $\calO_G(X \mid A \cap B; \calA)$ via $\varepsilon$-controlled morphisms.
	Assume that in $\calO_{G}(X \mid A \cup B; \calA)$ there is a factorization
	\begin{center}
		\begin{tikzcd}[column sep=0em, row sep=1em]
		(S, \pi, M) \arrow[rr, "\varphi"] \arrow[dr, swap, "\psi"] && (S', \pi', M') \\
		& (T, \rho, N) \arrow[ur, swap, "\chi"]
		\end{tikzcd} $\quad (T, \rho, N) \in \calO_G(X \mid B; \calA)$, 
	\end{center}
	where $\psi$ and $\chi$ are $\varepsilon$-controlled.
	We get a similar factorization for $\varphi_{00}$, namely
	\begin{center}
		\begin{tikzcd}[column sep=0em, row sep=1em]
		(S, \pi, M) \arrow[rr, "\varphi_{00}"] \arrow[dr, swap, "\psi_0"] && (S', \pi', M') \\
		& (T, \rho, N) \arrow[ur, swap, "\chi_0"]
		\end{tikzcd} $\quad (T, \rho, N) \in \calO_G(X \mid B; \calA)$,
	\end{center}
	where 
	$$(\psi_0)^s_{t} = \begin{cases}
	\psi^s_{t} & \pi_{A}(s) \in A_0 \\
	0 & \text{otherwise}
	\end{cases} \quad \text{ and } \quad (\chi_0)^t_{s'} = \begin{cases}
	\chi^t_{s'} & \pi'_{A}(s') \in A_0 \\
	0 & \text{otherwise.}
	\end{cases} $$
	Since $\psi_0$ and $\chi_0$ are $\varepsilon'$-controlled, we have
	$$ (\psi_0)^s_{t} \neq 0 \Rightarrow d(\underset{\in A \smallsetminus B}{\underbrace{\pi_{A}(s)}}, \rho_{B}(t)) < \varepsilon' < R \quad \text{ and } \quad (\chi_0)^t_{s'} \neq 0 \Rightarrow d(\rho_{B}(t), \underset{\in A \smallsetminus B}{\underbrace{\pi'_{X}(s')}}) < \varepsilon' < R$$
	and, since $d(A \smallsetminus B, B \smallsetminus A) > R$, we conclude $\rho_{B}(t) \in A$ and thus $\rho_{B}(t) \in A \cap B$.
	Set 
	$$T' := \{t \in T \where \exists s \in S: (\psi_0)^s_{t} \neq 0 \vee \exists s' \in S': (\chi_0)^t_{s'} \neq 0\}.$$
	Then $\varphi_{00}$ factors through $(T', \rho|_{T'}, N|_{T'})$, which is an object in $\calO_G(X \mid A \cap B; \calA)$.
	Finally, with $\varphi_{00}$, $\varphi_{01}$, $\varphi_{10}$ and $\varphi_{11}$ factoring through objects in $\calO_G(X \mid A \cap B; \calA)$, their sum $\varphi$ also factors through an object in $\calO_G(X \mid A \cap B; \calA)$.
\end{proof}

\begin{lemma}
	\label{lem:important_excisive_square}
	Let $X_n$ and $Y_n$ be metric spaces with isometric $G_n$-action.
	Let $A_n, B_n \subseteq Y_n$ be $G_n$-subspaces with $d(A_n \smallsetminus B_n, B_n \smallsetminus A_n) > R$ for a uniform $R > 0$.
	Then
	\begin{center}
		\begin{tikzcd}[row sep=1.5em, column sep=1.5em]
		\frac{\contr_n}{\bigoplus_n} \calO_{G_n}(X_n \mid A_n \cap B_n; \calA) \arrow[r] \arrow[d]  & \frac{\contr_n}{\bigoplus_n} \calO_{G_n}(X_n \mid A_n; \calA) \arrow[d] \\
		\frac{\contr_n}{\bigoplus_n} \calO_{G_n}(X_n \mid B_n; \calA) \arrow[r] & \frac{\contr_n}{\bigoplus_n} \calO_{G_n}(X_n \mid A_n \cup B_n; \calA)
		\end{tikzcd}
	\end{center}
	is an excisive square.
\end{lemma}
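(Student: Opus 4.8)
The plan is to verify the two defining conditions of an excisive square (Definition \ref{def:excisive_square}) directly, reducing both to the pointwise statement Lemma \ref{lem:controlled_categories_yield_excisive_squares} and exploiting two features of the quotient $\frac{\contr_n}{\bigoplus_n}(-)$: a morphism is only specified up to a family supported on finitely many indices $n$, so finitely many ``bad'' coordinates can always be discarded; and, by the convergence condition of Definition \ref{def:controlled_product}, the control of a morphism in the $Y_n$-direction tends to $0$ as $n\to\infty$.

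The first condition is routine. Given an object $X=(S_n,\pi_n,M_n)_n$ over $A_n\cup B_n$, I would apply Lemma \ref{lem:controlled_categories_yield_excisive_squares}.1 in each coordinate to split $(S_n,\pi_n,M_n)$ into a summand $P_n$ over $A_n$ and a summand $Q_n$ over $B_n$ with $0$-controlled inclusions and projections. Assembling yields objects $P$, $Q$ of the two side categories; the assembled inclusions and projections are uniformly $0$-controlled, hence genuine morphisms of the controlled products and of their quotients, and since the quotient functor respects direct sums, $X$ is the direct sum of $P$ and $Q$.

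The bulk of the proof is the second condition. Let $f$ be a morphism over $A_n$ that factors, in the bottom-right quotient, through an object over $B_n$. I would first unwind this: picking representatives, $f$ is a family $(\varphi_n)_n$, and there are families $(\psi_n)_n$, $(\chi_n)_n$ and an object $(T_n)_n$ over $B_n$ with $\varphi_n=\chi_n\comp\psi_n$ for all $n$ outside some finite set $F$. Using the convergence condition I would choose indices $N_1\le N_2\le\cdots$ with $N_1>\max F$ such that $\varphi_n,\psi_n,\chi_n$ are all $R/2^k$-controlled in the $Y_n$-direction for $n\ge N_k$. For $n\ge N_1$ set $k(n):=\max\{k\in\bbN\where n\ge N_k\}$; then $R/2^{k(n)}<R$, so Lemma \ref{lem:controlled_categories_yield_excisive_squares}.2 applied with $\varepsilon=\varepsilon'=R/2^{k(n)}$ replaces the factorization of $\varphi_n$ by one through an object $T_n'$ over $A_n\cap B_n$ via legs $\psi_n',\chi_n'$ that are still $R/2^{k(n)}$-controlled; for $n<N_1$ take $T_n'$ and the legs to be $0$. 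Assembling, $T':=(T_n')_n$ is an object over $A_n\cap B_n$, and $\varphi_n=\chi_n'\comp\psi_n'$ for all $n\ge N_1$, so $f=[\chi']\comp[\psi']$ in the quotient and $f$ factors through $T'$.

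The one delicate point — where I would be most careful — is that the reassembled legs $\psi',\chi'$ must themselves lie in the controlled product, not merely in the bounded product. This is why I would not apply Lemma \ref{lem:controlled_categories_yield_excisive_squares}.2 with a single $\varepsilon<R$ uniformly in $n$ (which would give only uniformly bounded legs), but use the diagonal choice $k(n)$, so that the per-coordinate control $R/2^{k(n)}$ of $\psi_n',\chi_n'$ tends to $0$ and the convergence condition of Definition \ref{def:controlled_product} holds. The remaining points — $G_n$-equivariance, the $X_n$-direction condition $(\ast)$, and the object conditions on $T_n'$ — are inherited directly from Lemma \ref{lem:controlled_categories_yield_excisive_squares} and need only a passing remark.
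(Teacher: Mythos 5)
Your proof follows the same basic structure as the paper's: property 1 of the excisive square is routine from Lemma \ref{lem:controlled_categories_yield_excisive_squares}.1, and property 2 is obtained by lifting the factorization, discarding finitely many indices using the quotient by $\bigoplus_n$, and then applying Lemma \ref{lem:controlled_categories_yield_excisive_squares}.2 coordinate-wise. Two comments. First, the ``diagonal'' selection of thresholds $N_1\le N_2\le\cdots$ is an unnecessary complication: since the conclusion of Lemma \ref{lem:controlled_categories_yield_excisive_squares}.2 produces legs that are \emph{as well controlled in the $Y$-direction as the originals}, a single threshold $N$ with a single $\varepsilon<R$ already guarantees that the $Y$-control of the new legs inherits the convergence to $0$ from $(\psi_n)_n$ and $(\chi_n)_n$. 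That is all the paper does.

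The genuine gap lies in the claim that ``the $X_n$-direction condition $(\ast)$ \dots [is] inherited directly from Lemma \ref{lem:controlled_categories_yield_excisive_squares} and need[s] only a passing remark.'' You are implicitly using that the assembled families $(\psi'_n)_n$ and $(\chi'_n)_n$ land in the \emph{controlled} product, which by Definition \ref{def:controlled_product} sits inside the \emph{bounded} product of Definition \ref{def:bounded_product}: you therefore also need a uniform bound $\alpha>0$, independent of $n$, on the $X_n$-direction control of the new legs. Lemma \ref{lem:controlled_categories_yield_excisive_squares}.2 as stated asserts control \emph{only} in the $Y$-direction and gives no information whatsoever on the $X$-direction control of the legs it produces. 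The paper notices exactly this point (``In $X_n$-direction, however, there is no assurance that $\psi'_n$ and $\chi'_n$ are $\varepsilon$-controlled for a uniform $\varepsilon>0$'') and repairs it by a further application of Lemma \ref{lem:gaining_control}, which replaces any factorization through an object over $A_n\cap B_n$ by one in which the two legs are $0$-controlled in the $X_n$-direction and the remaining $X_n$-control is concentrated in a middle map with the same $X_n$-control as $\varphi_n$. Without this step (or an explicit inspection of the \emph{proof}, not the statement, of Lemma \ref{lem:controlled_categories_yield_excisive_squares}.2 to argue that the construction there does not worsen $X$-control), your argument does not show that $[\psi']$ and $[\chi']$ are morphisms in $\frac{\contr_n}{\bigoplus_n}\calO_{G_n}(X_n\mid A_n\cap B_n;\calA)$.
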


\begin{proof}
	The first property of excisive squares follows immediately from the first property of Lemma \ref{lem:controlled_categories_yield_excisive_squares}.
	We have to take a closer look at the second property of excisive squares.
	
	Let $(\varphi_n)_n: (S_n, \pi_n, M_n)_n \rightarrow (S'_n, \pi'_n, M'_n)_n$ be a morphism of $\frac{\contr_n}{\bigoplus_n} \calO_{G_n}(X_n \mid A_n; \calA)$ factoring through an object of $\frac{\contr_n}{\bigoplus_n} \calO_{G_n}(X_n \mid B_n; \calA)$ as follows
	\begin{center}
		\begin{tikzcd}[column sep=0.5em, row sep=1.5em]
		(S_n, \pi_n, M_n)_n \arrow[rr, "(\varphi_n)_n"] \arrow[dr, "(\psi_n)_n"] && (S'_n, \pi'_n, M'_n)_n. \\
		& (T_n, \rho_n, N_n)_n \arrow[ur, "(\chi_n)_n"]
		\end{tikzcd}
	\end{center}
	Since we quotient out the direct sum, we can assume that $\varphi_n$, $\psi_n$ and $\chi_n$ are trivial for small $n$.
	Then the controlled product assures that, for larger $n$, $\varphi_n$, $\psi_n$ and $\chi_n$ are $\varepsilon$-controlled with $\varepsilon < R$.
	From Lemma \ref{lem:controlled_categories_yield_excisive_squares} we obtain objects $(T'_n, \rho'_n, N'_n) \in \calO_{G_n}(X_n \mid A_n \cap B_n; \calA_n)$ together with factorizations
	\begin{center}
		\begin{tikzcd}[column sep=0.5em, row sep=1.5em]
		(S_n, \pi_n, M_n) \arrow[rr, "\varphi_n"] \arrow[dr, "\psi'_n"] && (S'_n, \pi'_n, M'_n) \\
		& (T'_n, \rho'_n, N'_n) \arrow[ur, "\chi'_n"]
		\end{tikzcd}
	\end{center}
	in which $\psi'_n$ and $\chi'_n$ are as well controlled as $\psi$ and $\chi$ when only considering the control in $Y$-direction.
	In $X_n$-direction, however, there is no assurance that $\psi'_n$ and $\chi'_n$ are $\varepsilon$-controlled for a uniform $\varepsilon > 0$.
	This is where Lemma \ref{lem:gaining_control}, see below, comes into play.
	This lemma allows us to modify the factorization in such a way that the control-conditions for both $X_n$- and $Y_n$-direction are satisfied.
\end{proof}

\begin{lemma}
	\label{lem:gaining_control}
	Let $\varphi$ be a morphism of $\calO_G(X \mid A; \calA)$ which is $\varepsilon$-controlled in $X$-direction.
	Assume there is a factorization
	\begin{center}
		\begin{tikzcd}[column sep=0.5em, row sep=1.5em]
		(S, \pi, M) \arrow[rr, "\varphi"] \arrow[dr, "\psi"] && (S', \pi', M')\\
		& (T, \rho, N) \arrow[ur, "\chi"]
		\end{tikzcd}
	\end{center}
	in which $(T, \rho, N)$ is an object in $\calO_G(X \mid B; \calB)$ and in which $\psi$ and $\chi$ are $\delta$-controlled in $Y$ direction.
	Then there exists a factorization
	\begin{center}
		\begin{tikzcd}
		(S, \pi, M) \arrow[r, "\varphi"] \arrow[d, "\psi'"] & (S', \pi', M')\\
		(T', \rho', N')\arrow[r, "\varphi'"] & (T'', \rho'', N'') \arrow[u, "\chi'"]
		\end{tikzcd}
	\end{center}
	in which $(T', \rho', N')$ and $(T'', \rho'', N'')$ are again objects of $\calO_G(X \mid B; \calB)$ and in which $\psi'$ and $\chi'$ are $\delta$-controlled in $Y$ direction and 0-controlled in $X$-direction.
	Additionally, $\varphi'$ is $\varepsilon$-controlled in $X$-direction and 0-controlled in $Y$-direction.
\end{lemma}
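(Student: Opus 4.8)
The plan is to build the two middle objects by \emph{re-indexing} the given object $(T,\rho,N)$ once over the source $S$ and once over the target $S'$, and in the process to throw away exactly the routes through $T$ that are too long in the $X$-direction. A naive attempt — keeping the identity of $T$ but routing it through $S$ on the left and $S'$ on the right without any restriction — fails to be $\varepsilon$-controlled in $X$: there can be $s,s',t$ with $\psi^s_t\neq 0\neq\chi^t_{s'}$ while $d(\pi_X(s),\pi'_X(s'))$ is large, and such routes are harmless only because they cancel in the sum $\varphi^s_{s'}=\sum_t\chi^t_{s'}\comp\psi^s_t$. So I would work with the $G$-set
\[
P:=\set{(s,s',t)\in S\times S'\times T}{\psi^s_t\neq 0,\ \chi^t_{s'}\neq 0,\ d(\pi_X(s),\pi'_X(s'))<\varepsilon}
\]
with the diagonal $G$-action, and turn it into two (distinct, but same-underlying-set) objects $(T',\rho',N')$ and $(T'',\rho'',N'')$ of $\calO_G(X\mid B;\calB)$: in both cases the $Y$-coordinate, the $\bbN$-coordinate and the module of $(s,s',t)$ are inherited from $t\in T$, but the $X$-coordinate is $\pi_X(s)$ for $T'$ and $\pi'_X(s')$ for $T''$. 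Then $\psi'$ is the ``spread out'' copy of $\psi$, with $(\psi')^{s_0}_{(s,s',t)}=\psi^{s_0}_t$ if $s_0=s$ and $0$ otherwise; symmetrically $\chi'$ is the spread out copy of $\chi$; and $\varphi'$ is the identity on $P$, with entries $\id_{N(t)}$, regarded as a morphism $T'\to T''$.

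Next I would check well-definedness. That $P$ is a $G$-set uses equivariance of $\psi,\chi,\pi,\pi'$ and the fact that the $X$-action is isometric; freeness follows from the equivariant projection $P\to S$ since $S$ is a free $G$-set. The objects $(T',\rho',N')$, $(T'',\rho'',N'')$ really lie in $\calO_G(X\mid B;\calB)$ because their $Y$-coordinates and modules are those of $(T,\rho,N)$, and they are compactly supported in $X$ since $\im\rho'_X\subseteq\im\pi_X$ and $\im\rho''_X\subseteq\im\pi'_X$; the remaining preimage-finiteness conditions on objects, and the finiteness of rows and columns of $\psi',\chi',\varphi'$, all reduce to the corresponding finiteness properties of $S,S',T$ and of the matrices $\psi,\chi$ (e.g. each column of $\psi'$ and each row of $\chi'$ has a single nonzero entry, and $\varphi'$ is diagonal).

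Then I would verify the control assertions. Because the re-indexing forces the $X$-coordinates to agree, $\psi'$ and $\chi'$ are $0$-controlled in the $X$-direction, and they are $\delta$-controlled in the $Y$-direction because $\psi$ and $\chi$ are. The morphism $\varphi'$ is diagonal and the $Y$-coordinates of $T'$ and $T''$ coincide on $P$, so $\varphi'$ is $0$-controlled in the $Y$-direction, and it is $\varepsilon$-controlled in the $X$-direction precisely because the triples with $d(\pi_X(s),\pi'_X(s'))\geq\varepsilon$ have been excluded. The convergence condition $(\ast)$ is automatic for $\psi'$ and $\chi'$ (they are $0$-controlled in $X$); for $\varphi'$ it follows from the convergence conditions for $\psi$ and $\chi$, the uniform bound on the $\bbN$-displacement of their entries (they are morphisms, hence bounded), and the triangle inequality $d(\pi_X(s),\pi'_X(s'))\leq d(\pi_X(s),\rho_X(t))+d(\rho_X(t),\pi'_X(s'))$, both summands tending to $0$ as $\rho_\bbN(t)\to\infty$.

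The main obstacle — the only non-formal step — is to see that $\chi'\comp\varphi'\comp\psi'=\varphi$, i.e. that discarding the long routes does not change the composite. Unwinding the matrix product, only the diagonal of $\varphi'$ contributes, and the first-coordinate constraints in $\psi'$ and $\chi'$ pin down the $s$- and $s'$-entries, so that $(\chi'\comp\varphi'\comp\psi')^{s_0}_{s'_0}=\sum_{t:(s_0,s'_0,t)\in P}\chi^t_{s'_0}\comp\psi^{s_0}_t$. If $d(\pi_X(s_0),\pi'_X(s'_0))<\varepsilon$ this is the full sum $\sum_t\chi^t_{s'_0}\comp\psi^{s_0}_t=\varphi^{s_0}_{s'_0}$; if $d(\pi_X(s_0),\pi'_X(s'_0))\geq\varepsilon$ the index set is empty, so the entry is $0$, and $\varphi^{s_0}_{s'_0}=0$ as well because $\varphi$ is $\varepsilon$-controlled in the $X$-direction. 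This is exactly where the hypothesis that $\varphi$ itself — not merely its factorization — is $\varepsilon$-controlled in $X$ is used: it is what guarantees that the discarded terms already summed to zero, so that restricting to the short routes is harmless.
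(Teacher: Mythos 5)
Your construction is correct and is essentially the paper's own argument: re-index the middle object once over $S$ and once over $S'$ so that $\psi'$ and $\chi'$ become $0$-controlled in the $X$-direction, then discard the long routes in the middle map so that $\varphi'$ is $\varepsilon$-controlled, using the $\varepsilon$-control of $\varphi$ itself to see that the discarded terms already summed to zero. The only cosmetic difference is that the paper uses two underlying sets $T'\subseteq S\times T$ and $T''\subseteq T\times S'$ and cuts $\varphi'$ by the condition $\varphi^s_{s'}\neq 0$, whereas you use a single set $P\subseteq S\times S'\times T$ (with two different $X$-projections) and cut by $d(\pi_X(s),\pi'_X(s'))<\varepsilon$; both give the same $\varepsilon$-control on $\varphi'$ and the same identity $\chi'\comp\varphi'\comp\psi'=\varphi$.
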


\begin{proof}
	Define 
	\begin{align*}
	T' &:= \set{(s,t) \in S \times T}{\psi^s_t \neq 0} &
	T'' &:= \set{(t,s') \in T \times S'}{\chi^t_{s'} \neq 0} \\
	\rho'&: (s,t) \mapsto (\pi_X(s), \pi_A(t), \pi_\bbN(s)) \quad\;\text{ and }&
	\rho''&: (t,s') \mapsto (\pi'_X(s'), \pi_A(t), \pi'_\bbN(s')) \\
	N'&: (s,t) \mapsto N(t) &
	N''&: (t,s') \mapsto N(t).
	\end{align*}
	This way, we can define
	$$(\psi')^{s}_{(s'',t)} := \begin{cases}
	\psi^s_t & s=s'' \\
	0 & \text{otherwise}
	\end{cases}\quad \text{ and } \quad (\chi')^{(t, s'')}_{s'} := \begin{cases}
	\chi^t_{s'} & s'=s'' \\
	0 & \text{otherwise}
	\end{cases}$$
	as well as
	$$(\varphi')^{(s,t)}_{(t',s')} := \begin{cases}
	\id_{N(t)} & t = t' \wedge \varphi^s_{s'} \neq 0\\
	0 & \text{otherwise.}
	\end{cases}$$
	These morphisms clearly satisfy the control conditions as claimed.
	A direct computation shows $\varphi = \chi' \comp \varphi' \comp \psi'$.
\end{proof}

\begin{corollary}
	\label{cor:mayer_vietoris_sequence}
	Let $X_n$ and $Y_n$ be metric spaces with isometric $G_n$-action.
	Let $A_n, B_n \subseteq Y_n$ be $G_n$-subspaces with $d(A_n \smallsetminus B_n, B_n \smallsetminus A_n) > R$ for a uniform $R > 0$.
	Abbreviate $\calH_*(-) := K_*\left(\frac{\contr_n}{\bigoplus_n} \calO_{G_n}(X_n \mid - ; \calA)\right)$.
	Then there is a Meyer-Vietoris sequence
	$${\dots} \rightarrow \calH_{*}(A_n \cap B_n) \rightarrow \calH_{*}(A_n) \oplus \calH_{*}(B_n) \rightarrow \calH_{*}(A_n \cup B_n) \xrightarrow{\delta} \calH_{*-1}(A_n \cap B_n) \rightarrow {\dots}$$
\end{corollary}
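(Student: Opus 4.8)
The plan is to deduce the Mayer--Vietoris sequence from the excisive square produced by Lemma~\ref{lem:important_excisive_square} together with the localization theorem for the (nonconnective) algebraic $K$-theory of additive categories. Write $\calV$, $\calP$, $\calQ$, $\calC$ for the four quotient categories
\[
\calV = \frac{\contr_n}{\bigoplus_n}\calO_{G_n}(X_n \mid A_n \cap B_n; \calA), \qquad \calP = \frac{\contr_n}{\bigoplus_n}\calO_{G_n}(X_n \mid A_n; \calA),
\]
\[
\calQ = \frac{\contr_n}{\bigoplus_n}\calO_{G_n}(X_n \mid B_n; \calA), \qquad \calC = \frac{\contr_n}{\bigoplus_n}\calO_{G_n}(X_n \mid A_n \cup B_n; \calA),
\]
so that $\calH_*(A_n \cap B_n) = K_*(\calV)$, $\calH_*(A_n) = K_*(\calP)$, $\calH_*(B_n) = K_*(\calQ)$, $\calH_*(A_n \cup B_n) = K_*(\calC)$, and by Lemma~\ref{lem:important_excisive_square} the square of inclusions $\calV \hookrightarrow \calP$, $\calV \hookrightarrow \calQ$, $\calP \hookrightarrow \calC$, $\calQ \hookrightarrow \calC$ is excisive in the sense of Definition~\ref{def:excisive_square}. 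Here $K$ denotes the nonconnective algebraic $K$-theory spectrum of an additive category and $K_* = \pi_* K$.

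First I would apply the localization theorem to the two inclusions whose quotients are related by the excisive square, namely $\calV \hookrightarrow \calQ$ and $\calP \hookrightarrow \calC$, obtaining homotopy fibration sequences of spectra
\[
K(\calV) \longrightarrow K(\calQ) \longrightarrow K\big(\bigslant{\calQ}{\calV}\big) \qquad\text{and}\qquad K(\calP) \longrightarrow K(\calC) \longrightarrow K\big(\bigslant{\calC}{\calP}\big).
\]
By Lemma~\ref{lem:excisive_square_induces_isomorphism} the excisive square induces an equivalence of categories $\bigslant{\calQ}{\calV} \xrightarrow{\simeq} \bigslant{\calC}{\calP}$ compatible with the quotient functors, hence an equivalence $K(\bigslant{\calQ}{\calV}) \xrightarrow{\simeq} K(\bigslant{\calC}{\calP})$ of spectra. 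Therefore, in the commutative square
\begin{center}
\begin{tikzcd}[column sep=2.6em, row sep=1.6em]
K(\calV) \arrow[r] \arrow[d] & K(\calP) \arrow[d] \\
K(\calQ) \arrow[r] & K(\calC)
\end{tikzcd}
\end{center}
the induced map on vertical cofibres, which is exactly $K(\bigslant{\calQ}{\calV}) \to K(\bigslant{\calC}{\calP})$, is an equivalence. Thus the square is homotopy cocartesian, hence homotopy cartesian since cocartesian and cartesian squares of spectra coincide; and the long exact sequence of homotopy groups attached to a homotopy cartesian square of spectra gives
\[
\cdots \to K_*(\calV) \to K_*(\calP) \oplus K_*(\calQ) \to K_*(\calC) \xrightarrow{\delta} K_{*-1}(\calV) \to \cdots ,
\]
which upon substituting $\calH_*$ is the asserted Mayer--Vietoris sequence; the connecting map $\delta$ is the one coming from the homotopy cartesian square (equivalently, the composite of the boundary $K_*(\calC) \to K_{*-1}(\bigslant{\calC}{\calP}) \cong K_{*-1}(\bigslant{\calQ}{\calV})$ of the second fibration sequence with the boundary $K_{*-1}(\bigslant{\calQ}{\calV}) \to K_{*-1}(\calV)$ of the first).

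The step that needs genuine care, and which I expect to be the main obstacle, is the applicability of the localization theorem: one must know that the additive quotient categories $\bigslant{\calQ}{\calV}$ and $\bigslant{\calC}{\calP}$ actually compute the cofibres of the inclusions $\calV \hookrightarrow \calQ$ and $\calP \hookrightarrow \calC$ in nonconnective $K$-theory. For this I would check that these are inclusions of full subcategories closed under isomorphism and under passing to direct summands (a direct summand --- formed in the pertinent idempotent completion --- of an object whose support in $Y_n$ lies in $A_n$, resp.\ $B_n$, again has support in $A_n$, resp.\ $B_n$), so that up to idempotent completion --- which nonconnective $K$-theory does not see --- these additive quotients agree with the corresponding Karoubi quotients, and then invoke the standard fibration/localization theorem for Karoubi quotients of additive categories (in the spirit of the splitting results of \cite{Carlsson.Pedersen.1995} and of the fibration sequences used in \cite{Bartels.2007}). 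One also has to make sure that forming the controlled product $\contr_n$ and the quotient by $\bigoplus_n$ is compatible with these identifications, which follows from the explicit descriptions of objects and morphisms in Definitions~\ref{def:bounded_product} and~\ref{def:controlled_product}. Granting this input, the rest of the argument is formal.
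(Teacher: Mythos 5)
Your argument follows essentially the same route as the paper: both proofs invoke the localization/fibration sequence for the Karoubi quotients coming from the inclusions in the excisive square (the paper citing Cardenas--Pedersen for the Karoubi filtration), identify the two quotient categories via Lemma~\ref{lem:excisive_square_induces_isomorphism}, and then assemble the Mayer--Vietoris sequence by a standard argument. Your packaging via a homotopy (co)cartesian square of spectra and the paper's ``long exact ladder'' are the same argument in different clothing, and the point you flag as the ``genuine care'' step --- that the additive quotients compute cofibres in nonconnective $K$-theory --- is exactly the Karoubi-filtration input the paper supplies by reference.
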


\begin{proof}
	The inclusions $A_n \cap B_n \subseteq A_n$ and $B_n \subseteq A_n \cup B_n$ induce Karoubi filtrations, see \cite{Cardenas.1995}.
	Therefore, the excisive square induces a long exact ladder in $K$-theory
	\begin{center}
		\begin{tikzcd}[row sep=2em, column sep=1em]
		\calH_{*}(A_n \cap B_n) \arrow[r] \arrow[d]  & \calH_{*}(A_n) \arrow[d] \arrow[r] & K_*\left(\frac{\frac{\contr_n}{\bigoplus_n} \calO_{G_n}(X_n \mid A_n; \calA)}{\frac{\contr_n}{\bigoplus_n} \calO_{G_n}(X_n \mid A_n \cap B_n; \calA)}\right) \arrow[d, pos=0.3, "\cong"] \arrow[r, "\partial"] & \calH_{*-1}(A_n \cap B_n) \arrow[d] \\
		\calH_{*}(B_n) \arrow[r] & \calH_{*}(A_n \cup B_n) \arrow[urr, dashed, pos=0.2, out=40, in=230, "\delta"] \arrow[r] & K_*\left(\frac{\frac{\contr_n}{\bigoplus_n} \calO_{G_n}(X_n \mid A_n \cup B_n; \calA)}{\frac{\contr_n}{\bigoplus_n} \calO_{G_n}(X_n \mid B_n; \calA)}\right) \arrow[r, "\partial"] & \calH_{*-1}(B_n)
		\end{tikzcd}
	\end{center}
	in which we find the isomorphism of Lemma \ref{lem:excisive_square_induces_isomorphism}.
	The rest is a standard argument:
	The connective homomorphism $\delta$ is given by the composition in which we use the inverse of the excision isomorphism.
\end{proof}

\subsection{Homotopy Invariance}

The following lemma is a typical statement about homotopy invariance in controlled algebra.
Such statements are used to be proven using Eilenberg swindles, see for instance \cite[Proposition 5.6]{Bartels.2004}.

\begin{lemma}
Let $X_n$ and $Y_n$ be metric spaces with isometric $G_n$-action.
The inclusion $\{0\} \rightarrow [0,1]$ induces an isomorphism
$$ K_*\left(\frac{\contr_n}{\bigoplus_n} \calO_{G_n}(X_n \mid Y_n \times \{0\} ; \calA)\right) \rightarrow K_*\left(\frac{\contr_n}{\bigoplus_n} \calO_{G_n}(X_n \mid Y_n \times [0,1] ; \calA)\right). $$
\end{lemma}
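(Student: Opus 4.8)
The plan is to write down an explicit inverse on $K$-theory and to recognise one of the two composites as the identity by an Eilenberg swindle, as the remark preceding the statement indicates. Let $j\colon\{0\}\hookrightarrow[0,1]$ be the inclusion and $\pr\colon[0,1]\to\{0\}$ the projection, so $\pr\comp j=\id$. The induced maps $Y_n\times\{0\}\to Y_n\times[0,1]$ and $Y_n\times[0,1]\to Y_n\times\{0\}$ are $1$-Lipschitz, are $G_n$-equivariant (the $G_n$-action on $[0,1]$ being trivial), and leave the $X_n$- and $\bbN$-coordinates fixed; hence they preserve the convergence condition $(\ast)$ and the compact support, both in the $X_n$-direction, and since the second map does not increase distances the functor it induces enlarges no control, so it respects the condition defining $\contr_n$. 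They therefore induce functors $i$ and $p$, in opposite directions, between $\frac{\contr_n}{\bigoplus_n}\calO_{G_n}(X_n\mid Y_n\times\{0\};\calA)$ and $\frac{\contr_n}{\bigoplus_n}\calO_{G_n}(X_n\mid Y_n\times[0,1];\calA)$, with $p\comp i=\id$. Hence $p_*\comp i_*=\id$ on $K$-theory, so $i_*$ is a split injection, and it remains only to show that $i_*\comp p_*$ --- which is induced by the self-map $\rho\colon Y_n\times[0,1]\to Y_n\times[0,1]$, $(y,t)\mapsto(y,0)$ --- is the identity on $K$-theory.

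The difficulty is that, although $\rho$ is homotopic to $\id$ through the uniformly continuous homotopy $(y,t,s)\mapsto(y,(1-s)t)$, this homotopy moves points by up to $1$ in the $[0,1]$-direction, a bound that does not tend to $0$ with $n$; consequently the ``identity on underlying sets'' morphism $A\to\rho_*A$ lies in the bounded product $\prod^\bd_n$ but not in $\contr_n$, so $\rho_*=\id$ does not follow from closeness, and a naive swindle --- an infinite direct sum of copies of $A$ pushed successively towards $\{0\}$ --- is ruled out because such a sum would violate the local finiteness demanded of the objects of $\calO_{G_n}(-\mid-;\calA)$. What works is the Eilenberg swindle of \cite[Proposition~5.6]{Bartels.2004}, which is designed to thread exactly this needle using the auxiliary $\bbN$-direction carried by these categories; the argument there applies here once the passage from $\id$ to $\rho$ is broken up through the maps $(y,t)\mapsto(y,\max(t-c,0))$ with a mesh that shrinks as $n\to\infty$, so that every morphism entering the swindle has control tending to $0$ in the $n$-direction --- this is where one uses that $[0,1]$ is a bounded interval of fixed size. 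The swindle then furnishes a natural isomorphism identifying $\rho_*$ with $\id$ after applying $K$-theory.

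The step I expect to be the main obstacle is precisely this adaptation of \cite[Proposition~5.6]{Bartels.2004}: one has to check that the swindle can be run simultaneously for all $n$ compatibly with the $\varepsilon$-filtration of each $\calO_{G_n}(\,\cdot\mid\cdot\,;\calA)$, with the uniform boundedness of $\prod^\bd_n$ and with the $(Y_n\times[0,1])$-convergence condition of $\contr_n$, that the objects it produces still satisfy the local finiteness condition, and that the construction descends to the quotient by $\bigoplus_n$. With $i_*\comp p_*=\id$ in hand alongside $p_*\comp i_*=\id$, the map $i_*$ is an isomorphism, which is the assertion.
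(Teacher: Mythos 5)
Your two good observations are that $p_*\circ i_*=\id$ gives split injectivity and that an Eilenberg swindle should finish the job, but you then discard the correct swindle and substitute something vaguer that does not close the gap. The ``naive swindle'' you rule out --- pushing infinitely many copies of an object towards $Y_n\times\{0\}$ --- is precisely what the paper runs, and the local finiteness objection you raise against it is exactly what the Karoubi quotient structure is there to absorb. The inclusion $i$ induces a Karoubi filtration, so there is a long exact $K$-theory sequence, and one needs only to show
$$K_*\left(\bigslant{\frac{\contr_n}{\bigoplus_n}\calO_{G_n}(X_n\mid Y_n\times[0,1];\calA)}{\frac{\contr_n}{\bigoplus_n}\calO_{G_n}(X_n\mid Y_n\times\{0\};\calA)}\right)=0.$$
In this quotient, the endofunctor $F$ that rescales the $[0,1]$-coordinate of the $n$-th factor by $(1-1/n)$ (identity on morphisms) displaces points by at most $1/n$, so the identity-on-underlying-sets morphism $A\to FA$ lies in the controlled product and $F\simeq\id$; and, crucially, the accumulation of the modules of $\sum_k F^kA$ occurs at $Y_n\times\{0\}$, exactly the locus the quotient kills, so $\sum_k F^k$ is a legitimate endofunctor of the quotient. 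Then $\id\oplus\sum_k F^k\simeq\sum_k F^k$ forces the quotient's $K$-theory to vanish, $i_*$ is surjective, and with your split injectivity it is an isomorphism.

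Your substitute --- factoring the homotopy from $\id$ to $\rho$ through $(y,t)\mapsto(y,\max(t-c,0))$ with a shrinking mesh --- does not address the obstacle you correctly identified. The total displacement from $\id$ to $\rho$ is $1$ in $[0,1]$, uniformly in $n$, and no finite composite of endofunctors of the product category reaches $\rho$ while satisfying the $\contr_n$ condition, because the number of steps needed grows with $n$ but each endofunctor of the product must apply the same map in every factor. If you try to salvage it you are pushed to an infinite sum whose terms accumulate at $Y_n\times\{0\}$, and you still need a place where that accumulation may be ignored; that place is the Karoubi quotient. Replace the attempt to prove $i_*\circ p_*=\id$ directly with the long-exact-sequence argument.
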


\begin{proof}[Sketch proof]
	The induced morphism is part of a long exact Karoubi sequence whose third term vanishes.
	The third term is given by
	$$ K_*\left(\bigslant{\frac{\contr_n}{\bigoplus_n} \calO_{G_n}(X_n \mid Y_n \times [0,1] ; \calA)}{\frac{\contr_n}{\bigoplus_n} \calO_{G_n}(X_n \mid Y_n \times \{0\} ; \calA)}\right) $$
	and vanishes because of an Eilenberg swindle.
	In order to define such a swindle, we first define the endofunctor $F$ via
	$$F: (S_n, \pi_n, M_n)_n \mapsto (S_n, \widetilde\pi_n, M_n)_n$$
	$$\widetilde\pi_n(s) := \big(\pi_X(s), \big(\pi_Y(s), (1-\nicefrac1n) \cdot \pi_{[0,1]}(s)\big), \pi_\bbN(s)\big)$$
	which is the identity on morphisms.
	This functor pushes all modules towards 0 along the the $[0,1]$-coordinate.
	Since this happens slower for larger $n$, this is well-defined on the controlled product.
	If we take the powers of this functor, then we can add them all up without running into problems with local finiteness:
	The only compact sets in which infinitely many modules can accumulate are neighbourhoods of points in $X \times Y \times \{0\} \times \bbN$.
	However, we can ignore those because we quotient them out.
	
	In conclusion, $\sum_{k \in \bbN} F^k$ is a well-defined functor, satisfying
	\begin{equation*}
		\id + \sum_{k \in \bbN} F^k \simeq \sum_{k \in \bbN} F^k. \qedhere
	\end{equation*}
\end{proof}

\begin{lemma}
Let $X_n$, $Y_n$ and $Z_n$ be metric spaces with isometric $G_n$-action with $G_n$-equivariant uniformly continuous maps $f_n, g_n : Y_n \rightarrow Z_n$.
If $f_n$ and $g_n$ are homotopic via a uniformly continuous homotopy, then they induce the same morphism 
$$(f_n)_* = (g_n)_*: K_*\left(\frac{\contr_n}{\bigoplus_n} \calO_{G_n}(X_n \mid Y_n ; \calA)\right) \rightarrow K_*\left(\frac{\contr_n}{\bigoplus_n} \calO_{G_n}(X_n \mid Z_n ; \calA)\right). $$
\end{lemma}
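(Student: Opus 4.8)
The plan is to reduce this to the preceding lemma by the standard cylinder argument. First I would choose a $G_n$-equivariant uniformly continuous homotopy $H_n : Y_n \times [0,1] \to Z_n$ from $f_n$ to $g_n$, where $[0,1]$ carries the trivial $G_n$-action, and record that $H_n$ may be taken with a modulus of continuity that is independent of $n$. Let $j_0^n, j_1^n : Y_n \to Y_n \times [0,1]$ denote the inclusions at the two endpoints and $p_n : Y_n \times [0,1] \to Y_n$ the projection, so that $H_n \comp j_0^n = f_n$, $H_n \comp j_1^n = g_n$ and $p_n \comp j_0^n = p_n \comp j_1^n = \id_{Y_n}$. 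The maps $j_0^n, j_1^n$ are isometric embeddings, $p_n$ is $1$-Lipschitz, and $H_n$ is uniformly continuous, and all of them are $G_n$-equivariant; the induced functors on the bounded products preserve $\bigoplus_n$ and hence descend to the quotient categories $\frac{\contr_n}{\bigoplus_n} \calO_{G_n}(X_n \mid - ; \calA)$. By functoriality of $K$-theory it therefore suffices to prove $(j_0^n)_* = (j_1^n)_*$, as then $(f_n)_* = (H_n)_* \comp (j_0^n)_* = (H_n)_* \comp (j_1^n)_* = (g_n)_*$.

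The identity $(j_0^n)_* = (j_1^n)_*$ I would then obtain purely formally. By the preceding lemma the inclusion $Y_n \times \{0\} \hookrightarrow Y_n \times [0,1]$ induces an isomorphism, so $(j_0^n)_*$ is invertible; conjugating by the $G_n$-equivariant isometric involution $t \mapsto 1-t$ of $[0,1]$, which carries $j_0^n$ to $j_1^n$, shows $(j_1^n)_*$ is invertible as well. From $(p_n)_* \comp (j_0^n)_* = \id$ together with the invertibility of $(j_0^n)_*$ one reads off $(p_n)_* = \big((j_0^n)_*\big)^{-1}$, and likewise $(p_n)_* = \big((j_1^n)_*\big)^{-1}$; by uniqueness of inverses, $(j_0^n)_* = (j_1^n)_*$.

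The step I expect to need genuine care, as opposed to the formal manipulation above, is verifying that $H_n$, $j_0^n$, $j_1^n$ and $p_n$ induce functors on the \emph{controlled} products of Definition \ref{def:controlled_product}, not merely on the bounded products. Concretely, one must check that the convergence condition in the $Y_n$- (resp.\ $Z_n$-) direction is preserved: a family of morphisms that is $\varepsilon_n$-controlled in the $Y_n$-direction with $\varepsilon_n \to 0$ must be sent to a family that is $\varepsilon_n'$-controlled in the $Z_n$-direction with $\varepsilon_n' \to 0$. This is exactly what a uniform (i.e.\ $n$-independent) modulus of continuity for the $H_n$ provides, which is why the uniform-continuity hypothesis is present; for $j_0^n, j_1^n, p_n$ it is automatic since they are $1$-Lipschitz. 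Everything else — equivariance, additivity, and compatibility with the cofiniteness and compact support conditions on objects — is routine bookkeeping, and the homotopy-invariance content itself is already packaged in the preceding lemma, which in turn rests on an Eilenberg swindle, cf.\ \cite[Proposition 5.6]{Bartels.2004}.
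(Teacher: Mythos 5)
Your proof is correct and follows essentially the same route as the paper's: pass to the cylinder $Y_n \times [0,1]$, observe that the two endpoint inclusions share the projection as a common left-inverse, and invoke the preceding lemma (that the inclusion at $0$ induces an isomorphism) to conclude the two inclusions induce equal maps, whence $(f_n)_* = (H_n)_* \comp (j_0^n)_* = (H_n)_* \comp (j_1^n)_* = (g_n)_*$. The only cosmetic difference is that you separately establish invertibility of $(j_1^n)_*$ by conjugating with $t \mapsto 1-t$, which is not actually needed: once $(j_0^n)_*$ is invertible, $(p_n)_*$ is its inverse, hence itself invertible, and $(j_1^n)_*$ is then forced to equal $(p_n)_*^{-1}$ as the unique right inverse of $(p_n)_*$. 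Your explicit remark that the $H_n$ need a modulus of continuity uniform in $n$ in order to descend to the controlled product is a correct and worthwhile observation that the paper leaves implicit.
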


\begin{proof}
	Abbreviate $\calH_*(-) := K_*\left(\frac{\contr_n}{\bigoplus_n} \calO_{G_n}(X_n \mid - ; \calA)\right)$.
	Consider the following diagram.
	\begin{center}
		\begin{tikzcd}[column sep=0.5em, row sep=1.5em]
			\calH_{*}(Y_n) \arrow[rr, "{(f_n)_* ,\; (g_n)_*}"] \arrow[dr, out=-70, in = 175, swap, "{(\inc_0)_* ,\; (\inc_1)_*}"] && \calH_{*}(Z_n) \\
			& \calH_{*}(Y_n \times [0,1]) \arrow[ur, "(H_n)_*"] \arrow{ul}{\cong}[swap]{\pr_*}
		\end{tikzcd}
	\end{center}
	Both $Y_n \rightarrow Y_n \times \{0\} \rightarrow Y_n \times [0,1]$ and $Y_n \rightarrow Y_n \times \{1\} \rightarrow Y_n \times [0,1]$ have the same left-inverse.
	Thus, the same is true for their induced morphisms.
	Since those are isomorphisms by the previous lemma, they must equal.
\end{proof}

\subsection{Induction over the Skeleta}

\begin{lemma}
	\label{lem:trivial_k_theory}
	Let $X_n$ and $Y_n$ be $G_n$-spaces such that $Y_n$ is the geometric realization of uniformly finite-dimensional simplicial complexes and that for each $n \in \bbN$, $G_n$ acts freely on $Y_n$.
	Additionally assume that
	$$K_*\left(\frac{\prod^\bd_n}{\bigoplus_n}\calO(X_n; \calA)\right) = 0.$$	
	Then
	$$K_*\bigg(\frac{\contr_n}{\bigoplus_n} \calO_{G_n}(X_n \mid Y_n ; \calA)\bigg) = 0.$$
\end{lemma}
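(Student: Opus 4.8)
The plan is to prove the lemma by induction on the uniform dimension bound $D:=\sup_n\dim Y_n<\infty$. Throughout I write $\calH_*(-):=K_*\!\left(\frac{\contr_n}{\bigoplus_n}\calO_{G_n}(X_n\mid-\,;\calA)\right)$, so the assertion is $\calH_*(Y_n)=0$. Since we quotient out $\bigoplus_n$, all constructions below only need to be arranged uniformly for large $n$; and the hypothesis $K_*\!\left(\frac{\prod^\bd_n}{\bigoplus_n}\calO(X_n;\calA)\right)=0$ stays available whenever $Y_n$ is replaced by another family of subspaces, since the spaces $X_n$ are never touched.

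In the base case $D=0$ each $Y_n$ is a free $G_n$-set whose distinct points are uniformly separated by some $\mu>0$. The convergence condition in the definition of $\contr_n$ then forces every morphism to be, for $n$ large, $\mu'$-controlled in the $Y_n$-direction for every $\mu'<\mu$, hence supported on the diagonal of $Y_n$; so modulo $\bigoplus_n$ we may pass to the full subcategory of $Y_n$-diagonal morphisms. Choosing $G_n$-orbit representatives and untwisting the free action identifies that subcategory, orbit by orbit, with copies of the non-equivariant $\calO(X_n;\calA)$, compatibly with the $\varepsilon$-filtration in the $X_n$-direction; assembling these over $n$ and using that $K$-theory commutes with filtered colimits, $\calH_*(Y_n)=0$ follows from the hypothesis. (In the intended application $Y_n/G_n$ is even finite, which shortens this bookkeeping considerably.)

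For the inductive step assume the lemma for all families of dimension $<D$, and let $\dim Y_n\le D$. Put $Z_n:=Y_n^{(D-1)}$, and on each $D$-simplex $\sigma$ of $Y_n$ use the ``cone coordinate'': a point other than the barycenter $b_\sigma$ is $(1-t)b_\sigma+t\,y$ for a unique $y\in\partial\sigma$, so $t=0$ is $b_\sigma$ and $t=1$ is $\partial\sigma\subseteq Z_n$. Set $A_n:=Z_n\cup\bigcup_\sigma\{\,t\ge\tfrac13\text{ in }\sigma\,\}$ and $B_n:=\bigcup_\sigma\{\,t\le\tfrac23\text{ in }\sigma\,\}$. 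Both are $G_n$-invariant, $A_n\cup B_n=Y_n$, and a direct estimate in the standard metric on $\Delta^D$ (a point with $t<\tfrac13$ in some $\sigma$ lies at distance bounded below, in terms of $D$ alone, from $\partial\sigma$ — hence from $Z_n$ and from everything outside $\mathring\sigma$ — and from the part $\{t>\tfrac23\}$ of $\sigma$) shows $d(A_n\smallsetminus B_n,\,B_n\smallsetminus A_n)\ge R$ for a constant $R=R(D)>0$ independent of $n$. Hence Corollary \ref{cor:mayer_vietoris_sequence} supplies a Mayer--Vietoris sequence relating $\calH_*(A_n\cap B_n)$, $\calH_*(A_n)\oplus\calH_*(B_n)$ and $\calH_*(A_n\cup B_n)=\calH_*(Y_n)$. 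Now $A_n$ deformation retracts onto $Z_n$ by pushing $t\to1$; $A_n\cap B_n=\bigsqcup_\sigma\{\tfrac13\le t\le\tfrac23\text{ in }\sigma\}$ deformation retracts onto $\bigsqcup_\sigma\{t=\tfrac12\text{ in }\sigma\}$, a disjoint union of $(D-1)$-spheres which is the geometric realization of a simplicial complex of dimension $\le D-1$ with free $G_n$-action; and $B_n$, a disjoint union of cones, deformation retracts onto the barycenter set $P_n:=\{b_\sigma\}_\sigma$ via the straight-line homotopy $(x,u)\mapsto(1-u)x+u\,b_\sigma$, which stays inside $B_n$. Each of these retractions and homotopies is $G_n$-equivariant and Lipschitz with constant depending only on $D$ (they stay away from the cone points, the last being $1$-Lipschitz in $x$), so each induces a functor on the controlled product and the homotopy--invariance lemma applies. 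Therefore $\calH_*(A_n)\cong\calH_*(Z_n)=0$ and $\calH_*(A_n\cap B_n)\cong\calH_*\big(\bigsqcup_\sigma\partial\sigma\big)=0$ by the inductive hypothesis (with the unchanged $X_n$), while $\calH_*(B_n)\cong\calH_*(P_n)=0$ by the base case applied to the $0$-dimensional free $G_n$-set $P_n$. Plugging the first two vanishings into the Mayer--Vietoris sequence gives $\calH_*(Y_n)\cong\calH_*(B_n)=0$, completing the induction.

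The main obstacle is the base case: translating the hypothesis, which concerns the non-equivariant categories $\calO(X_n;\calA)$, into the corresponding vanishing over a discrete free $G_n$-set inside the controlled product. The delicate points there are (i) making ``eventual $Y_n$-diagonality'' rigorous in the quotient $\frac{\contr_n}{\bigoplus_n}$, (ii) the untwisting of the free $G_n$-action together with its compatibility with the $\varepsilon$-filtration in the $X_n$-direction, and (iii) handling the a~priori infinite orbit set $Y_n/G_n$. A secondary, purely technical burden is to keep every retraction and homotopy in the inductive step simultaneously $G_n$-equivariant and uniformly Lipschitz, so that they genuinely induce functors on $\frac{\contr_n}{\bigoplus_n}\calO_{G_n}(X_n\mid-\,;\calA)$ and the homotopy--invariance and Mayer--Vietoris lemmas apply verbatim.
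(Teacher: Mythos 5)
Your proposal is correct and follows essentially the same strategy as the paper: induction on the dimension bound, with the base case handled by untwisting the free $G_n$-action to reduce to the non-equivariant hypothesis, and the inductive step carried out via the Mayer--Vietoris sequence of Corollary \ref{cor:mayer_vietoris_sequence} applied to a neighbourhood of the $(D{-}1)$-skeleton and a neighbourhood of the top-dimensional barycentres. The only (welcome) difference is that you make the sets $A_n$, $B_n$ and the $G_n$-equivariant Lipschitz retractions fully explicit via the cone coordinate, whereas the paper merely lists the properties these sets must have.
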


\begin{proof}
	\emph{Base case.} Assume $Y_n$ is 0-dimensional.
	In that case, $Y_n$ is 1-separated and hence every $\varepsilon$-controlled morphism is already 0-controlled if $\varepsilon < 1$.
	This is expressed in the equality
	$$K_*\bigg(\frac{\contr_n}{\bigoplus_n} \calO_{G_n}(X_n \mid Y_n ; \calA)\bigg) =	K_*\bigg(\frac{\prod^\bd_n}{\bigoplus_n} \calO_{G_n,0}(X_n \mid Y_n;\calA)\bigg)$$
	where $\calO_{G_n,0}(X_n \mid Y_n;\calA)$ is the subcategory of $\calO_{G_n}(X_n \mid Y_n;\calA)$ in which all morphisms are 0-controlled in $Y_n$-direction.
	When we consider $Y_n$ as the union of its $G_n$-orbits we further have
	\begin{align*}
	K_*\bigg(\frac{\prod^\bd_n}{\bigoplus_n} \calO_{G_n,0}(X_n \mid Y_n;\calA)\bigg) &= K_*\bigg(\frac{\prod^\bd_n}{\bigoplus_n} \bigoplus_\text{orbits} \calO_{G_n}(X_n \mid \text{orbit} ; \calA)\bigg) \\
	&= \bigoplus_\text{orbits} K_*\bigg(\frac{\prod^\bd_n}{\bigoplus_n} \calO_{G_n}(X_n \mid \text{orbit} ; \calA)\bigg).
	\end{align*}
	We can also write $\calO_{G_n}(X_n \mid \text{orbit} ; \calA)$ as $\calO_{G_n}(X_n; \calC_0(\text{orbit} ; \calA))$, where $\calC_0(- ; \calA)$ is the subcategory of $\calC(- ; \calA)$ in which all morphisms are 0-controlled.
	Using \cite{Zeggel.2021} we then have an equivalence of $\varepsilon$-filtered categories $\calO_{G_n}(X_n; \calC_0(\text{orbit}; \calA)) \simeq \calO(X_n; \calA)$.
	Putting all this together we obtain
	$$K_*\bigg(\frac{\contr_n}{\bigoplus_n} \calO_{G_n}(X_n \mid Y_n ; \calA)\bigg) = \bigoplus_\text{orbits} K_*\bigg(\frac{\prod^\bd_n}{\bigoplus_n} \calO(X_n; \calA)\bigg) = 0.$$
	
	\vspace{1ex}
	\emph{Induction step.} Assume that $Y_n$ is $(N+1)$-dimensional.
	Choose open neighbourhoods $A_n, B_n \subseteq Y_n$ such that
	\begin{itemize}
		\item $A_n$ deformation retracts to the $N$-skeleton of $Y_n$,
		\item $B_n$ deformation retracts to the union of all barycentres of the $(N+1)$-simplices,
		\item $A_n \cap B_n$ is homotopy equivalent to a union of $N$-spheres, and
		\item $d(A_n \smallsetminus B_n, B_n \smallsetminus A_n) > R$ for some $R > 0$.
	\end{itemize}
	This way we get an according Mayer-Vietoris sequence from Corollary \ref{cor:mayer_vietoris_sequence}.
	For $A_n$, $B_n$ and $A_n \cap B_n$ now either the induction hypothesis or the base case applies. This shows that all terms in the Mayer-Vietoris sequence must vanish.
\end{proof}

\begin{lemma}
	\label{lem:proof_of_assumption}
	The assumption of the Lemma \ref{lem:trivial_k_theory} can be proven for trees using $\nicefrac12$-maps, i.e.\ maps $f: X \rightarrow X$ with $d(f(x), f(y)) \leq \frac12 d(x,y)$ that are Lipschitz homotopic to the identity, cf.\ \cite[Definition 11.1]{Higson.1997}.
\end{lemma}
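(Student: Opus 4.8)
The assumption to be verified is that $K_*\big(\frac{\prod^\bd_n}{\bigoplus_n}\calO(X_n;\calA)\big)=0$, where in the situation of the preceding sections $X_n=\widetilde\Gamma_n$ is a (locally finite) tree and the group action is forgotten. The plan is to exhibit an Eilenberg swindle built from a $\nicefrac12$-map, using the $\bbN$-coordinate hidden in $\calO(-;\calA)$ to keep the swindle well-defined. First I would fix a vertex $v_n\in\widetilde\Gamma_n$ and take $f_n\colon\widetilde\Gamma_n\to\widetilde\Gamma_n$ to be the map sending $x$ to the midpoint of the geodesic $[v_n,x]$. A short case distinction on the branch point of two geodesics issuing from $v_n$ shows $d(f_n(x),f_n(y))\le\tfrac12 d(x,y)$, so each $f_n$ is a $\nicefrac12$-map, and the geodesic contraction towards $v_n$ is a homotopy $H_n\colon\widetilde\Gamma_n\times[0,1]\to\widetilde\Gamma_n$ from $\id$ to $f_n$ through uniformly $1$-Lipschitz maps, as in \cite[Definition~11.1]{Higson.1997}.

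Because $f_n$ is $\tfrac12$-Lipschitz and proper it induces an endofunctor $\Phi$ of $\frac{\prod^\bd_n}{\bigoplus_n}\calO(X_n;\calA)$, namely ``apply $f_n$ to the $X_n$-coordinate'': it improves the propagation uniformly, preserves the convergence condition and preserves compact support. Likewise the shift of the $\bbN$-coordinate by $1$ gives an endofunctor $G$, and since this shift is realised by a uniformly $1$-controlled isomorphism that moves nothing in the $X_n$-direction there is a natural isomorphism $\id\cong G$; in particular $G$ induces the identity on $K$-theory. Now set $F:=\Phi\circ G$ and $T:=\bigoplus_{k\ge 0}F^k$. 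The key point is that $T$ is a well-defined endofunctor: applied to an object, the $k$-th summand $F^k$ moves the $X_n$-support by $f_n^k$ and raises the $\bbN$-level by $k$, so — as $f_n$ contracts towards $v_n$ — all $X_n$-supports remain inside a fixed compact set while the $\bbN$-levels escape to infinity; hence only finitely many modules meet any compact subset of $X_n\times\bbN$, and the $2^{-k}$-decay of the propagation in the $X_n$-direction combined with the level shift shows that $\bigoplus_kF^k\varphi$ again satisfies the convergence condition. This is precisely the step where both ingredients enter: the $\nicefrac12$-contraction for the propagation bounds, and the $\bbN$-shift for local finiteness.

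From $T=\id\oplus\bigoplus_{k\ge1}F^k=\id\oplus(F\circ T)$ and additivity one gets $T_*=\id_*+F_*\circ T_*$ on $K$-theory. It remains to see that $F_*=\id_*$. Since $G\cong\id$ we have $F_*=\Phi_*$, so it suffices to show $\Phi_*=\id_*$, and this is the homotopy-invariance input: the Lipschitz homotopy $H_n$ yields a natural weak equivalence between $\Phi$ and $\id$ after passing to $\Chhfd\big(\frac{\prod^\bd_n}{\bigoplus_n}\calO(X_n;\calA)\big)$, whose $K$-theory is unchanged, by running the homotopy along the $\bbN$-coordinate in the same spirit as the transfer construction of Subsection~\ref{sec:tilde_construction} (cf.\ \cite{Higson.1997,Bartels.2007}). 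Plugging $F_*=\Phi_*=\id_*$ into $T_*=\id_*+F_*\circ T_*$ gives $T_*=\id_*+T_*$, hence $\id_*=0$, and therefore $K_*\big(\frac{\prod^\bd_n}{\bigoplus_n}\calO(X_n;\calA)\big)=0$, which is the assumption of Lemma~\ref{lem:trivial_k_theory}.

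The main obstacle is this last step. Being $\tfrac12$-Lipschitz alone is not enough — that only makes the swindle $T$ well-defined — and one genuinely needs the $\nicefrac12$-map to be Lipschitz-homotopic to the identity in order to identify $F_*$ with $\id_*$. The geodesic homotopy $H_n$ fails to be uniformly continuous on the unbounded tree, so the homotopy-invariance lemmas already proved for the $Y$-direction of $\calO(-\mid-;\calA)$ do not apply verbatim; turning $H_n$ into an honest natural weak equivalence of functors by using the $\bbN$-coordinate of $\calO(-;\calA)$ as a resolution parameter is where the real work sits, and I would carry it out along the lines of \cite[Section~6]{Bartels.2007}.
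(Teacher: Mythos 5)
Your proposal takes a genuinely different route from the paper. The paper first rewrites $\frac{\prod^\bd_n}{\bigoplus_n}\calO(X_n;\calA)$ as a germ category $\calB:=\calO^\gg_\varepsilon\!\big(\bigsqcup_n \pt\mid X_n;\calA\big)$, then uses the $\nicefrac12$-maps to build a functor $F=\prod_m f^m\colon\calB\to\contr_m\calB$ into a \emph{controlled product} (not a direct sum), and runs a ``product-vs-sum'' argument: it establishes $K_*\big(\contr_m\calB\big)\cong K_*\big(\bigoplus_m\calB\big)\cong\bigoplus_m K_*(\calB)$, observes that under this identification $F_*$ has $k$-th component $(f^k)_*=\id$, and concludes that $\id_{K_*(\calB)}$ must be zero because it lands in a direct sum with all components equal. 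The isomorphism $K_*\big(\bigoplus_m\calB\big)\cong K_*\big(\contr_m\calB\big)$ is obtained by \emph{recursively} applying Lemma~\ref{lem:trivial_k_theory} to $\calO^\gg_\varepsilon$, whose base-case assumption is handled by a separate Eilenberg swindle along the internal $\bbN$-direction. You instead build a direct Eilenberg swindle $T=\bigoplus_{k\ge 0}F^k$ on $\calB$ itself, interleaving the $\nicefrac12$-map $\Phi$ with the $\bbN$-shift $G$ to keep $T$ well-defined; this sidesteps the recursive step entirely and is in that sense cleaner. Your verification that $T$ is a well-defined endofunctor (compact $X_n$-support stays bounded under iterated contraction while the $\bbN$-levels escape, and the combination of propagation halving and $\bbN$-shift preserves the convergence condition) looks correct, and so does $G_*=\id_*$ via the $0$-controlled-in-$X$ natural isomorphism.

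The place where your argument is incomplete is exactly where you say it is: you need $F_*=\Phi_*=\id_*$, i.e.\ that a $\nicefrac12$-map Lipschitz-homotopic to the identity acts trivially on $K$-theory of $\calB$, and you correctly note that the geodesic contraction is not uniformly continuous on the unbounded tree, so the paper's $Y$-direction homotopy-invariance lemmas do not apply. The paper does not prove this either---it simply cites (the algebraic version of) \cite[Theorem~11.2]{Higson.1997} for the identity $(f^k)_*=\id$, which is precisely the same input. So the shape of the argument is different, but the one non-formal ingredient---that $\nicefrac12$-maps Lipschitz-homotopic to $\id$ induce the identity in controlled $K$-theory---is shared, and neither your proposal nor the paper's proof establishes it from scratch. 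If you do want to fill that in, your instinct to run the homotopy over the internal $\bbN$-coordinate is the right one; but as written your proof has an acknowledged gap at this step, whereas the paper closes it by citation.
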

	
\begin{proof}
	Let $\calO_\varepsilon(X \mid Y; \calA)$ be the subcategory of $\calO(X \mid Y; \calA)$ that has the same objects, but only those morphisms that satisfy the convergence condition ((\ref{eq:convergence}) on page \pageref{eq:convergence}) for both $X$- and $Y$-direction.
	This way we have $\calO(X; \calA) = \calO_\varepsilon(\pt \mid X; \calA)$.
	Now 
	$$\!\!\!\frac{\prod_n^\bd}{\bigoplus_n} \calO(X_n; \calA) 
	= \frac{\prod_n^\bd}{\bigoplus_n} \calO_\varepsilon(\pt \mid X_n; \calA) 
	= \frac{\calO^\lf_\varepsilon(\bigsqcup\nolimits_n \pt \mid X_n; \calA)}{\calO_\varepsilon(\bigsqcup\nolimits_n \pt \mid X_n; \calA)}  
	=: \calO_\varepsilon^\gg\left(\bigsqcup\nolimits_n \pt \mid X_n; \calA\right).$$

	Let $f_n: X_n \rightarrow X_n$ be a sequence of $\nicefrac12$-maps.
	Set $\calB = \calO_\varepsilon^\gg\left(\bigsqcup\nolimits_n \pt \mid X_n; \calA\right)$ and define $f: \calB \rightarrow \calB$ as the product of $(f_n)_*$.
	Note that the control of a morphism, that $f$ is applied to, gets halved.
	By taking higher and higher powers of $f$, we can make the control arbitrarily small, which leads to a well-defined functor $F: \calB \rightarrow \contr_m \calB$ given by $F = \prod_m f^m$.
	
	Consider the following diagram for $k \in \bbN$.
	\begin{center}
		\begin{tikzcd}
			K_*\left(\contr_m \calB\right) \arrow[dr, "(\pr_k)_*"] & K_*\left(\bigoplus_m \calB\right) \arrow[d, "(\pr_k)_*"] \arrow[l, swap, "\cong"] & \bigoplus_m K_*\left(\calB\right) \arrow[l, swap, "\cong"] \arrow[dl, "\pr_k"] \\
			K_*\big(\calB\big) \arrow[r, "\id"] \arrow[u, "F"] & K_*\big(\calB\big)
		\end{tikzcd}
	\end{center}
	If we know that it is commutative and the inclusion $\bigoplus_m \calB \rightarrow \contr_m \calB$ induces an isomorphism, then a simple diagram chase shows that $K_*(\calB) = 0$.
	
	In order to show commutativity we must show $(f^m)_* = \id$.
	This follows from the algebraic version of \cite[Theorem 11.2]{Higson.1997}.
	
	The inclusion $\bigoplus_m \calB \rightarrow \contr_m \calB$ induces an isomorphism because it is part of a long exact Karoubi sequence in which the third term, namely $K_*\big(\frac{\contr_m}{\bigoplus_m} \calB\big)$, vanishes.
	This again is true because of Lemma \ref{lem:trivial_k_theory}, which can be proven for $\calO_\varepsilon^\gg$ in exactly the same way as for $\calO$.
	In this case we have $G = \{1\}$ and the assumption we have to verify is
	$$K_*\left(\frac{\prod_n^\bd}{\bigoplus_n}\calO^\gg\left(\bigsqcup\nolimits_n \pt; \calA\right)\right) = 0.$$
	Since the points in the disjoint union $\bigsqcup_n \pt$ have infinite distance to each other, we can define an Eilenberg-swindle on $\frac{\prod_n^\bd}{\bigoplus_n}\calO^\gg\left(\bigsqcup\nolimits_n \pt; \calA\right)$ by pushing everything towards infinity along the internal $\bbN$-direction.
\end{proof}
	
	\bibliography{bibliography.bib}
	
\end{document}